\newcommand{\zerodisplayskips}{%
  \setlength{\abovedisplayskip}{1pt}%
  \setlength{\belowdisplayskip}{1pt}%
  \setlength{\abovedisplayshortskip}{1pt}%
  \setlength{\belowdisplayshortskip}{1pt}}
\appto{\normalsize}{\zerodisplayskips}
\appto{\small}{\zerodisplayskips}
\appto{\footnotesize}{\zerodisplayskips}
\newtheorem{theorem}{Theorem}[section]
\theoremstyle{definition}
\theoremstyle{remark}
\theoremstyle{corollary}
\numberwithin{equation}{section}
\newcommand{\Bfamsix}{\{B^{6,l}\}_{l \geq 1}}
\newcommand{\veps}{\varepsilon}
\newcommand{\vphi}{\varphi}
\newcommand{\mZ}{\mathbb{Z}}
\def\ge{\mathfrak{g}}
\def\al{\alpha}
\def\cV{\mathcal{V}}
\def\cB{\mathcal{B}}
\def\C{\mathbb{C}}
\def\Z{\mathbb{Z}}
\def\L{\Lambda}
\def\e{\tilde{e}}
\def\f{\tilde{f}}
\begin{document}

\title[Ultra-discretization of $D_6^{(1)}$- Geometric Crystal]{Ultra-Discretization of $D_6^{(1)}$- Geometric Crystal at the spin node}


\author{Kailash C. Misra}
\address{Department of Mathematics, North Carolina State University,  Raleigh,  
NC 27695-8205, USA}
\email{misra@ncsu.edu}
\thanks{KCM is partially supported by the Simons Foundation Grant \#636482.}
\author{Suchada Pongprasert}
\address{Department of Mathematics,
Srinakharinwirot University, Bangkok, Thailand 10310}
\email{suchadapo@g.swu.ac.th }

\subjclass[2010]{Primary 17B37,17B10; Secondary 17B67}


\dedicatory{To Mirko Primc on his 70th Birthday}


\begin{abstract}
Let $\ge$ be an affine Lie algebra with index set $I = \{0, 1, 2, \cdots , n\}$. It is conjectured in \cite{KNO} that  for each Dynkin node $k \in I \setminus \{0\}$ the affine Lie algebra $\ge$ has a positive geometric crystal whose ultra-discretization is isomorphic to the limit of a coherent family of perfect crystals for the Langland dual $\ge ^L$.  In this paper we show that at the spin node $k=6$, the family of perfect crystals given in \cite{KMN2} form a coherent family and show that its limit $B^{6,\infty}$ is isomorphic to the ultra-discretization of the positive geometric crystal we constructed in \cite{MP} for the affine Lie algebra $D_6^{(1)}$ which proves the conjecture in this case.
\end{abstract}

\maketitle

\section{Introduction}
\setcounter{equation}{0}
Let $\ge$ denote a simply-laced affine Lie algebra with Cartan datum $(A, \{\al_i\}_{i \in I},$ \\ $ \{\al^\vee_i\}_{\i\in I})$, where $A= (a_{ij})_{i,j \in I}, I = \{0, 1, \cdots , n\}$ is a symmetric affine Cartan matrix and $U_q(\ge)$ denote the corresponding quantum affine algebra. Let $P= \Z \Lambda_0 \oplus \Z \Lambda_1\oplus 
\cdots \oplus \Z \Lambda_n \oplus \Z\delta$ and  
$P^\vee = \Z \al^\vee_0 \oplus \Z \al^\vee_1 \oplus \cdots \oplus 
\Z \al^\vee_n \oplus \Z d $ denote the affine weight lattice 
and the dual affine weight lattice  where $\delta$ and $d$ denote the simple imaginary root and the degree derivation respectively.
For a dominant weight  $\lambda \in P^+ = \{\mu \in P \mid \mu (h_i) 
\geq 0 \quad  {\rm for \ \  all} \quad i \in I \}$ of level 
$l = \lambda ({\bf c})$ (${\bf c}$ is the canonical central element), 
 $(L(\lambda), B(\lambda))$ denote the crystal base \cite{Kas1, Kas2, Lu}
for the integrable highest weight $U_q(\ge)$-module $V(\lambda)$. To give explicit realization of the crystal $B(\lambda)$, 
the notion of affine crystal and perfect crystal has been introduced 
in \cite{KMN1}. In particular, it is shown in \cite{KMN1, KMN2} that 
the affine crystal $B(\lambda)$ for the level $l \in \Z_{\geq 1}$ 
integrable highest weight $U_q(\ge)$-module $V(\lambda)$ can be 
realized as the semi-infinite tensor product $\cdots \otimes B^l \otimes
B^l \otimes B^l$, 
where $B^l$ is a perfect crystal of level $l$. This is known as the path realization of the crystal $B(\lambda)$.
Subsequently it is noticed in \cite{KKM} that one needs 
a coherent family of perfect crystals $\{B^l\}_{l \geq 1}$ 
in order to give a path realization of the crystal $B^{\infty}$ of $U_q^-(\ge)$. In particular, 
the crystal $B(\infty)$ can be realized as the
semi-infinite tensor product $\cdots \otimes B^{\infty} \otimes 
B^{\infty} \otimes B^{\infty}$ where $B^{\infty}$ is the limit of 
the coherent family of perfect crystals $\{B^l\}_{l \geq 1}$. 

On the other hand the geometric crystal \cite{BK, N} for the simply-laced affine Lie algebra $\ge$ is a quadruple $\cV(\ge)=(X, \{e_i\}_{i \in I}, \{\gamma_i\}_{i \in I},$ 
$\{\veps_i\}_{i\in I})$, 
where $X$ is an ind-variety,  $e_i:\C^\times\times
X\longrightarrow X$ $((c,x)\mapsto e^c_i(x))$
are rational $\C^\times$-actions and  
$\gamma_i,\veps_i:X\longrightarrow 
\C$ $(i\in I)$ are rational functions satisfying the following:
\begin{enumerate}
\item $\{1\}\times X\subset {\rm dom}(e_i) \;
{\rm for} \; {\rm any} \; i\in I,$
\item $\gamma_j(e^c_i(x))=c^{a_{ij}}\gamma_j(x),$
\item $\begin{cases} 
\begin{array}{lll}
&\hspace{-20pt} \quad e^{c_1}_{i}e^{c_2}_{j}
=e^{c_2}_{j}e^{c_1}_{i}&
{\rm if }\,\,a_{ij}=a_{ji}=0,\\
&\hspace{-20pt} \quad e^{c_1}_{i}e^{c_1c_2}_{j}e^{c_2}_{i}
=e^{c_2}_{j}e^{c_1c_2}_{i}e^{c_1}_{j}&
{\rm if }\,\,a_{ij}=a_{ji}=-1,\\
\end{array}
\end{cases}$
\item $\veps_i(e_i^c(x))=c^{-1}\veps_i(x)$ and $\veps_i(e_j^c(x))=\veps_i(x) \qquad {\rm if }\,
a_{i,j}=a_{j,i}=0.$
\end{enumerate}
The geometric crystal $\cV(\ge)$ is said to be positive if it has a 
positive structure \cite{BK, KNO, N}. 
Roughly speaking this means that each of the rational maps 
$e^c_i$, $\veps_i$  and $\gamma_i$ are
ratios of polynomial functions with positive coefficients. 

A remarkable relation between positive geometric crystals 
and algebraic crystals is the ultra-discretization functor $\mathcal
{UD}$ 
between them \cite{BK}. Applying this functor, positive rational 
functions are transfered to piecewise linear 
functions by the simple correspondence:
$$
x \times y \longmapsto x+y, \qquad \frac{x}{y} \longmapsto x - y, 
\qquad x + y \longmapsto {\rm max}\{x, y\}.
$$

It was conjectured in \cite{KNO} that for each affine Lie algebra $\ge$ and 
each Dynkin index $i \in I \setminus \{0\}$, there exists a positive geometric crystal
$\cV(\ge)=(X, \{e_i\}_{i \in I}, \{\gamma_i\}_{i \in I}, $ $
\{\veps_i\}_{i\in I})$ whose ultra-discretization $\mathcal{UD}(\cV)$ is isomorphic 
to the limit $B^{\infty}$ of a coherent family of perfect crystals for the Langlands dual $\ge^L$.
So far this conjecture has been proved for  $( k= 1; \ge = A_n^{(1)}, 
B_n^{(1)}, C_n^{(1)}, D_n^{(1)}, A_{2n-1}^{(2)}, A_{2n}^{(2)},
D_{n+1}^{(2)}$)  \cite{KNO}; ($k \geq 2; A_n^{(1)}$) \cite{MN1, MN2}; ($k = 1; G_2^{(1)}$) \cite{N2, N3}; $(k = 1; D_4^{(3)}$) \cite{IN, IMN}; ($k=5, D_5^{(1)}$) \cite{IMP}. In \cite{MP} we construct a positive geometric crystal for the affine Lie algebra $D_6^{(1)}$ at the Dynkin spin node $k= 6$. In this paper for $l \in \mathbb{Z}_{\geq 1}$, we show that the family of perfect crystals $\{B^{6, l}\}_{l\geq 1}$ for $D_6^{(1)}$ given in \cite{KMN2} is a coherent family of perfect crystals with limit $B^{6, \infty}$. Furthermore, we prove that the ultra-discretization of the positive geometric crystal 
$\cV (D_6^{(1)})$ constructed in \cite{MP} is isomorphic as crystal to $B^{6, \infty}$ proving the conjecture  \cite{KNO} in this case.

\section{Perfect Crystals of type \bf{$D_6^{(1)}$}}
From now on we assume $\ge$ to be the affine Lie algebra $D_6^{(1)}$ with index set $I = \{0,1,2,3,4,5,6\}$, Cartan matrix $A = (a_{ij})_{i,j \in I}$ where $a_{ii} = 2, a_{j,j + 1} = -1 = a_{j+1,j}, \; j = 1,2,3,4, a_{02} = a_{20} = a_{46} = a_{64} = -1, a_{ij} = 0$ otherwise, and Dynkin diagram:
\begin{center}
\begin{tikzpicture}
\draw (-2,1)--(-1,0); \draw (-2,-1)--(-1,0); \draw (-1,0)--(.5,0); \draw (.5,0)--(2,0); \draw (2,0)--(3,1); \draw (2,0)--(3,-1);
\draw [fill] (-2,1) circle [radius=0.1] node[left=.1pt] (a) {0};
\draw [fill] (-2,-1) circle [radius=0.1] node[left=.1pt] (b) {1};
\draw [fill] (-1,0) circle [radius=0.1] node[below=.3pt] (c) {2};
\draw [fill] (.5,0) circle [radius=0.1] node[below=.3pt] (d) {3};
\draw [fill] (2,0) circle [radius=0.1] node[below=.3pt] (e) {4};
\draw [fill] (3,1) circle [radius=0.1] node[right=.1pt] (f) {5};
\draw [fill] (3,-1) circle [radius=0.1] node[right=.1pt] (g) {6};        
\end{tikzpicture}
\end{center}
Let $\{\alpha_0, \alpha_1, \alpha_2, \alpha_3, \alpha_4, \alpha_5, \alpha_6\}, \ \{\check{\alpha_0}, \check{\alpha_1}, \check{\alpha_2}, \check{\alpha_3}, \check{\alpha_4}, \check{\alpha_5}, \check{\alpha_6}\}$ and $\{\Lambda_0, \Lambda_1, \Lambda_2, \Lambda_3, \Lambda_4, \\\Lambda_5, \Lambda_6\}$ denote the set of simple roots, simple coroots and fundamental weights, respectively.
Then ${\bf c} =\check{\alpha_0}+\check{\alpha_1}+2\check{\alpha_2}+2\check{\alpha_3}+2\check{\alpha_4}+\check{\alpha_5}+\check{\alpha_6}$ and $\delta = \al_0 +\al_1+2\al_2+2\al_3+2\al_4+\al_5+\al_6$ are the canonical central element and null root respectively. The sets $P_{cl} = \oplus_{j=0}^6 \Z\L_j$ and $P = P_{cl}\oplus\Z\delta$ are called classical weight lattice and weight lattice respectively.

In this section we reformulate the $D_6^{(1)}$-perfect crystals $\{B^{6,l}\}_{l\in\mathbb{Z}_{\geq 1}}$ corresponding to the spin node $k=6$ given in \cite{KMN2} in coordinatized form and show that 
it a coherent family of perfect crystal with limit $B^{6,\infty}$.

For a positive integer $l$, we consider the sets $B^{6,l}$ and $B^{6,\infty}$ as follows. 
\begin{align*}
B^{6,l} &= 
     \left \{ b= (b_{ij})_{\scriptsize{\begin{array}{l}i \leq j \leq i+5,\\ 1 \leq i \leq 6\end{array}}} \middle|
\begin{aligned}
&b_{ij} \in \mZ_{\geq 0},\ \sum_{j=i}^{i+5} b_{ij} = l,\ 1 \leq i \leq 6,\\ 
&\sum_{j=i}^{6-t} b_{ij} = \sum_{j=i+t}^{5+t} b_{i+t,j},\ 1 \leq i, t \leq 5,\\
& \sum_{j=i}^{t} b_{ij} \geq \sum_{j=i+1}^{t+1} b_{i+1,j},\ 1 \leq i \leq t \leq 5
\end{aligned} \right\}, \\
B^{6,\infty} &= 
    \left \{ b= (b_{ij})_{\scriptsize{\begin{array}{l}i \leq j \leq i+5,\\ 1 \leq i \leq 6\end{array}}} \middle|
\begin{aligned}
&b_{ij} \in \mZ,\ \sum_{j=i}^{i+5} b_{ij} = 0,\ 1 \leq i \leq 6, \\
&\sum_{j=i}^{6-t} b_{ij} = \sum_{j=i+t}^{5+t} b_{i+t,j},\ 1 \leq i, t \leq 5
 \end{aligned} \right\}.
\end{align*}
For $\cB = B^{6,l} \, \text{or} \, B^{6,\infty}$ we define the maps $\e_k, \f_k : \cB \longrightarrow \cB \cup \{0\}$, $\veps_k , \vphi_k : \cB \longrightarrow \mZ$, $0 \leq k \leq 6$ and $\text{wt} : \cB \longrightarrow P_{cl}$, as follows. First we define conditions $(E_j), \ 1 \leq j \leq 14$:
\begin{align*}
(E_1) 	& \hspace{5pt}-b_{13}-b_{14}-b_{15}+b_{22} > b_{55}, \\
		& \hspace{5pt}-b_{13}-b_{14}-b_{15}+b_{22} > -b_{13}-b_{23}+b_{44}+b_{45}, \\
		& \hspace{5pt}-b_{13}-b_{14}-b_{15}+b_{22} > -b_{13}-b_{34}+b_{44}+b_{45}, \\
		& \hspace{5pt}-b_{13}-b_{14}-b_{15}+b_{22} > -b_{13}+b_{44}, \\
		& \hspace{5pt}-b_{13}-b_{14}-b_{15}+b_{22} > -b_{24}-b_{34}+b_{44}+b_{45}, \\
		& \hspace{5pt}-b_{13}-b_{14}-b_{15}+b_{22} > -b_{24}+b_{44}, \\
		& \hspace{5pt}-b_{13}-b_{14}-b_{15}+b_{22} > -b_{35}+b_{44}, \\
		& \hspace{5pt}-b_{13}-b_{14}-b_{15}+b_{22} > -b_{13}-b_{14}-b_{23}-b_{24}+b_{33}+b_{34}+b_{35}, \\
		& \hspace{5pt}-b_{13}-b_{14}-b_{15}+b_{22} > -b_{13}-b_{14}-b_{23}+b_{33}+b_{34}, \\
		& \hspace{5pt}-b_{13}-b_{14}-b_{15}+b_{22} > -b_{13}-b_{23}-b_{25}+b_{33}+b_{34}, \\
		& \hspace{5pt}-b_{13}-b_{14}-b_{15}+b_{22} > -b_{13}-b_{14}+b_{33}, \\
		& \hspace{5pt}-b_{13}-b_{14}-b_{15}+b_{22} > -b_{13}-b_{25}+b_{33}, \\
		& \hspace{5pt}-b_{13}-b_{14}-b_{15}+b_{22} > -b_{24}-b_{25}+b_{33}, \\
(E_2) 	& \hspace{5pt}-b_{13}-b_{14}-b_{23}-b_{24}+b_{33}+b_{34}+b_{35} > b_{55}, \\
		& \hspace{5pt}-b_{13}-b_{14}-b_{23}-b_{24}+b_{33}+b_{34}+b_{35} > -b_{13}-b_{23}+b_{44}+b_{45}, \\
		& \hspace{5pt}-b_{13}-b_{14}-b_{23}-b_{24}+b_{33}+b_{34}+b_{35} > -b_{13}-b_{34}+b_{44}+b_{45}, \\
		& \hspace{5pt}-b_{13}-b_{14}-b_{23}-b_{24}+b_{33}+b_{34}+b_{35} > -b_{13}+b_{44}, \\
		& \hspace{5pt}-b_{13}-b_{14}-b_{23}-b_{24}+b_{33}+b_{34}+b_{35} > -b_{24}-b_{34}+b_{44}+b_{45}, \\
		& \hspace{5pt}-b_{13}-b_{14}-b_{23}-b_{24}+b_{33}+b_{34}+b_{35} > -b_{24}+b_{44}, \\
		& \hspace{5pt}-b_{13}-b_{14}-b_{23}-b_{24}+b_{33}+b_{34}+b_{35} > -b_{35}+b_{44}, \\
		& \hspace{5pt}-b_{13}-b_{14}-b_{23}-b_{24}+b_{33}+b_{34}+b_{35} \geq -b_{13}-b_{14}-b_{15}+b_{22}, \\
		& \hspace{5pt}-b_{13}-b_{14}-b_{23}-b_{24}+b_{33}+b_{34}+b_{35} > -b_{13}-b_{14}-b_{23}+b_{33}+b_{34}, \\
		& \hspace{5pt}-b_{13}-b_{14}-b_{23}-b_{24}+b_{33}+b_{34}+b_{35} > -b_{13}-b_{23}-b_{25}+b_{33}+b_{34}, \\
		& \hspace{5pt}-b_{13}-b_{14}-b_{23}-b_{24}+b_{33}+b_{34}+b_{35} > -b_{13}-b_{14}+b_{33}, \\
		& \hspace{5pt}-b_{13}-b_{14}-b_{23}-b_{24}+b_{33}+b_{34}+b_{35} > -b_{13}-b_{25}+b_{33}, \\
		& \hspace{5pt}-b_{13}-b_{14}-b_{23}-b_{24}+b_{33}+b_{34}+b_{35} > -b_{24}-b_{25}+b_{33}, \\
(E_3) 	& \hspace{5pt}-b_{13}-b_{14}-b_{23}+b_{33}+b_{34} > b_{55}, \\
		& \hspace{5pt}-b_{13}-b_{14}-b_{23}+b_{33}+b_{34} > -b_{13}-b_{23}+b_{44}+b_{45}, \\
		& \hspace{5pt}-b_{13}-b_{14}-b_{23}+b_{33}+b_{34} > -b_{13}-b_{34}+b_{44}+b_{45}, \\
		& \hspace{5pt}-b_{13}-b_{14}-b_{23}+b_{33}+b_{34} > -b_{13}+b_{44}, \\
		& \hspace{5pt}-b_{13}-b_{14}-b_{23}+b_{33}+b_{34} > -b_{24}-b_{34}+b_{44}+b_{45}, \\
		& \hspace{5pt}-b_{13}-b_{14}-b_{23}+b_{33}+b_{34} > -b_{24}+b_{44}, \\
		& \hspace{5pt}-b_{13}-b_{14}-b_{23}+b_{33}+b_{34} > -b_{35}+b_{44}, \\
		& \hspace{5pt}-b_{13}-b_{14}-b_{23}+b_{33}+b_{34} \geq -b_{13}-b_{14}-b_{15}+b_{22}, \\
		& \hspace{5pt}-b_{13}-b_{14}-b_{23}+b_{33}+b_{34} \geq -b_{13}-b_{14}-b_{23}-b_{24}+b_{33}+b_{34}+b_{35}, \\
		& \hspace{5pt}-b_{13}-b_{14}-b_{23}+b_{33}+b_{34} > -b_{13}-b_{23}-b_{25}+b_{33}+b_{34}, \\
		& \hspace{5pt}-b_{13}-b_{14}-b_{23}+b_{33}+b_{34} > -b_{13}-b_{14}+b_{33}, \\
		& \hspace{5pt}-b_{13}-b_{14}-b_{23}+b_{33}+b_{34} > -b_{13}-b_{25}+b_{33}, \\
		& \hspace{5pt}-b_{13}-b_{14}-b_{23}+b_{33}+b_{34} > -b_{24}-b_{25}+b_{33}, \\
(E_4) 	& \hspace{5pt}-b_{13}-b_{23}-b_{25}+b_{33}+b_{34} > b_{55}, \\
		& \hspace{5pt}-b_{13}-b_{23}-b_{25}+b_{33}+b_{34} > -b_{13}-b_{23}+b_{44}+b_{45}, \\
		& \hspace{5pt}-b_{13}-b_{23}-b_{25}+b_{33}+b_{34} > -b_{13}-b_{34}+b_{44}+b_{45}, \\
		& \hspace{5pt}-b_{13}-b_{23}-b_{25}+b_{33}+b_{34} > -b_{13}+b_{44}, \\
		& \hspace{5pt}-b_{13}-b_{23}-b_{25}+b_{33}+b_{34} > -b_{24}-b_{34}+b_{44}+b_{45}, \\
		& \hspace{5pt}-b_{13}-b_{23}-b_{25}+b_{33}+b_{34} > -b_{24}+b_{44}, \\
		& \hspace{5pt}-b_{13}-b_{23}-b_{25}+b_{33}+b_{34} > -b_{35}+b_{44}, \\
		& \hspace{5pt}-b_{13}-b_{23}-b_{25}+b_{33}+b_{34} \geq -b_{13}-b_{14}-b_{15}+b_{22}, \\
		& \hspace{5pt}-b_{13}-b_{23}-b_{25}+b_{33}+b_{34} \geq -b_{13}-b_{14}-b_{23}-b_{24}+b_{33}+b_{34}+b_{35}, \\
		& \hspace{5pt}-b_{13}-b_{23}-b_{25}+b_{33}+b_{34} \geq -b_{13}-b_{14}-b_{23}+b_{33}+b_{34}, \\
		& \hspace{5pt}-b_{13}-b_{23}-b_{25}+b_{33}+b_{34} > -b_{13}-b_{14}+b_{33}, \\
		& \hspace{5pt}-b_{13}-b_{23}-b_{25}+b_{33}+b_{34} > -b_{13}-b_{25}+b_{33}, \\
		& \hspace{5pt}-b_{13}-b_{23}-b_{25}+b_{33}+b_{34} > -b_{24}-b_{25}+b_{33}, \\
(E_5) 	& \hspace{5pt}-b_{13}-b_{14}+b_{33} > b_{55}, \\
		& \hspace{5pt}-b_{13}-b_{14}+b_{33} > -b_{13}-b_{23}+b_{44}+b_{45}, \\
		& \hspace{5pt}-b_{13}-b_{14}+b_{33} > -b_{13}-b_{34}+b_{44}+b_{45}, \\
		& \hspace{5pt}-b_{13}-b_{14}+b_{33} > -b_{13}+b_{44}, \\
		& \hspace{5pt}-b_{13}-b_{14}+b_{33} > -b_{24}-b_{34}+b_{44}+b_{45}, \\
		& \hspace{5pt}-b_{13}-b_{14}+b_{33} > -b_{24}+b_{44}, \\
		& \hspace{5pt}-b_{13}-b_{14}+b_{33} > -b_{35}+b_{44}, \\
		& \hspace{5pt}-b_{13}-b_{14}+b_{33} \geq -b_{13}-b_{14}-b_{15}+b_{22}, \\
		& \hspace{5pt}-b_{13}-b_{14}+b_{33} \geq -b_{13}-b_{14}-b_{23}-b_{24}+b_{33}+b_{34}+b_{35}, \\
		& \hspace{5pt}-b_{13}-b_{14}+b_{33} \geq -b_{13}-b_{14}-b_{23}+b_{33}+b_{34}, \\
		& \hspace{5pt}-b_{13}-b_{14}+b_{33} > -b_{13}-b_{23}-b_{25}+b_{33}+b_{34}, \\
		& \hspace{5pt}-b_{13}-b_{14}+b_{33} > -b_{13}-b_{25}+b_{33}, \\
		& \hspace{5pt}-b_{13}-b_{14}+b_{33} > -b_{24}-b_{25}+b_{33}, \\
(E_6) 	& \hspace{5pt}-b_{13}-b_{23}+b_{44}+b_{45} > b_{55}, \\
		& \hspace{5pt}-b_{13}-b_{23}+b_{44}+b_{45} > -b_{13}-b_{34}+b_{44}+b_{45}, \\
		& \hspace{5pt}-b_{13}-b_{23}+b_{44}+b_{45} > -b_{13}+b_{44}, \\
		& \hspace{5pt}-b_{13}-b_{23}+b_{44}+b_{45} > -b_{24}-b_{34}+b_{44}+b_{45}, \\
		& \hspace{5pt}-b_{13}-b_{23}+b_{44}+b_{45} > -b_{24}+b_{44}, \\
		& \hspace{5pt}-b_{13}-b_{23}+b_{44}+b_{45} > -b_{35}+b_{44}, \\
		& \hspace{5pt}-b_{13}-b_{23}+b_{44}+b_{45} \geq -b_{13}-b_{14}-b_{15}+b_{22}, \\
		& \hspace{5pt}-b_{13}-b_{23}+b_{44}+b_{45} \geq -b_{13}-b_{14}-b_{23}-b_{24}+b_{33}+b_{34}+b_{35}, \\
		& \hspace{5pt}-b_{13}-b_{23}+b_{44}+b_{45} \geq -b_{13}-b_{14}-b_{23}+b_{33}+b_{34}, \\
		& \hspace{5pt}-b_{13}-b_{23}+b_{44}+b_{45} \geq -b_{13}-b_{23}-b_{25}+b_{33}+b_{34}, \\
		& \hspace{5pt}-b_{13}-b_{23}+b_{44}+b_{45} > -b_{13}-b_{14}+b_{33}, \\
		& \hspace{5pt}-b_{13}-b_{23}+b_{44}+b_{45} > -b_{13}-b_{25}+b_{33}, \\
		& \hspace{5pt}-b_{13}-b_{23}+b_{44}+b_{45} > -b_{24}-b_{25}+b_{33}, \\	
(E_7) 	& \hspace{5pt}-b_{13}-b_{25}+b_{33} > b_{55}, \\
		& \hspace{5pt}-b_{13}-b_{25}+b_{33} > -b_{13}-b_{23}+b_{44}+b_{45}, \\
		& \hspace{5pt}-b_{13}-b_{25}+b_{33} > -b_{13}-b_{34}+b_{44}+b_{45}, \\
		& \hspace{5pt}-b_{13}-b_{25}+b_{33} > -b_{13}+b_{44}, \\
		& \hspace{5pt}-b_{13}-b_{25}+b_{33} > -b_{24}-b_{34}+b_{44}+b_{45}, \\
		& \hspace{5pt}-b_{13}-b_{25}+b_{33} > -b_{24}+b_{44}, \\
		& \hspace{5pt}-b_{13}-b_{25}+b_{33} > -b_{35}+b_{44}, \\
		& \hspace{5pt}-b_{13}-b_{25}+b_{33} \geq -b_{13}-b_{14}-b_{15}+b_{22}, \\
		& \hspace{5pt}-b_{13}-b_{25}+b_{33} \geq -b_{13}-b_{14}-b_{23}-b_{24}+b_{33}+b_{34}+b_{35}, \\
		& \hspace{5pt}-b_{13}-b_{25}+b_{33} \geq -b_{13}-b_{14}-b_{23}+b_{33}+b_{34}, \\
		& \hspace{5pt}-b_{13}-b_{25}+b_{33} \geq -b_{13}-b_{23}-b_{25}+b_{33}+b_{34}, \\
		& \hspace{5pt}-b_{13}-b_{25}+b_{33} \geq -b_{13}-b_{14}+b_{33}, \\
		& \hspace{5pt}-b_{13}-b_{25}+b_{33} > -b_{24}-b_{25}+b_{33}, \\		
(E_8) 	& \hspace{5pt}-b_{24}-b_{25}+b_{33} > b_{55}, \\
		& \hspace{5pt}-b_{24}-b_{25}+b_{33} > -b_{13}-b_{23}+b_{44}+b_{45}, \\
		& \hspace{5pt}-b_{24}-b_{25}+b_{33} > -b_{13}-b_{34}+b_{44}+b_{45}, \\
		& \hspace{5pt}-b_{24}-b_{25}+b_{33} > -b_{13}+b_{44}, \\
		& \hspace{5pt}-b_{24}-b_{25}+b_{33} > -b_{24}-b_{34}+b_{44}+b_{45}, \\
		& \hspace{5pt}-b_{24}-b_{25}+b_{33} > -b_{24}+b_{44}, \\
		& \hspace{5pt}-b_{24}-b_{25}+b_{33} > -b_{35}+b_{44}, \\
		& \hspace{5pt}-b_{24}-b_{25}+b_{33} \geq -b_{13}-b_{14}-b_{15}+b_{22}, \\
		& \hspace{5pt}-b_{24}-b_{25}+b_{33} \geq -b_{13}-b_{14}-b_{23}-b_{24}+b_{33}+b_{34}+b_{35}, \\
		& \hspace{5pt}-b_{24}-b_{25}+b_{33} \geq -b_{13}-b_{14}-b_{23}+b_{33}+b_{34}, \\
		& \hspace{5pt}-b_{24}-b_{25}+b_{33} \geq -b_{13}-b_{23}-b_{25}+b_{33}+b_{34}, \\
		& \hspace{5pt}-b_{24}-b_{25}+b_{33} \geq -b_{13}-b_{14}+b_{33}, \\
		& \hspace{5pt}-b_{24}-b_{25}+b_{33} \geq -b_{13}-b_{25}+b_{33}, \\
(E_9) 	& \hspace{5pt}-b_{13}-b_{34}+b_{44}+b_{45} > b_{55}, \\
		& \hspace{5pt}-b_{13}-b_{34}+b_{44}+b_{45} \geq -b_{13}-b_{23}+b_{44}+b_{45}, \\
		& \hspace{5pt}-b_{13}-b_{34}+b_{44}+b_{45} > -b_{13}+b_{44}, \\
		& \hspace{5pt}-b_{13}-b_{34}+b_{44}+b_{45} > -b_{24}-b_{34}+b_{44}+b_{45}, \\
		& \hspace{5pt}-b_{13}-b_{34}+b_{44}+b_{45} > -b_{24}+b_{44}, \\
		& \hspace{5pt}-b_{13}-b_{34}+b_{44}+b_{45} > -b_{35}+b_{44}, \\
		& \hspace{5pt}-b_{13}-b_{34}+b_{44}+b_{45} \geq -b_{13}-b_{14}-b_{15}+b_{22}, \\
		& \hspace{5pt}-b_{13}-b_{34}+b_{44}+b_{45} \geq -b_{13}-b_{14}-b_{23}-b_{24}+b_{33}+b_{34}+b_{35}, \\
		& \hspace{5pt}-b_{13}-b_{34}+b_{44}+b_{45} \geq -b_{13}-b_{14}-b_{23}+b_{33}+b_{34}, \\
		& \hspace{5pt}-b_{13}-b_{34}+b_{44}+b_{45} \geq -b_{13}-b_{23}-b_{25}+b_{33}+b_{34}, \\
		& \hspace{5pt}-b_{13}-b_{34}+b_{44}+b_{45} \geq -b_{13}-b_{14}+b_{33}, \\
		& \hspace{5pt}-b_{13}-b_{34}+b_{44}+b_{45} \geq -b_{13}-b_{25}+b_{33}, \\
		& \hspace{5pt}-b_{13}-b_{34}+b_{44}+b_{45} > -b_{24}-b_{25}+b_{33}, \\	
(E_{10}) 	& \hspace{5pt}-b_{13}+b_{44} > b_{55}, \\
		& \hspace{5pt}-b_{13}+b_{44} \geq -b_{13}-b_{23}+b_{44}+b_{45}, \\
		& \hspace{5pt}-b_{13}+b_{44} \geq -b_{13}-b_{34}+b_{44}+b_{45}, \\
		& \hspace{5pt}-b_{13}+b_{44} > -b_{24}-b_{34}+b_{44}+b_{45}, \\
		& \hspace{5pt}-b_{13}+b_{44} > -b_{24}+b_{44}, \\
		& \hspace{5pt}-b_{13}+b_{44} > -b_{35}+b_{44}, \\
		& \hspace{5pt}-b_{13}+b_{44} \geq -b_{13}-b_{14}-b_{15}+b_{22}, \\
		& \hspace{5pt}-b_{13}+b_{44} \geq -b_{13}-b_{14}-b_{23}-b_{24}+b_{33}+b_{34}+b_{35}, \\
		& \hspace{5pt}-b_{13}+b_{44} \geq -b_{13}-b_{14}-b_{23}+b_{33}+b_{34}, \\
		& \hspace{5pt}-b_{13}+b_{44} \geq -b_{13}-b_{23}-b_{25}+b_{33}+b_{34}, \\
		& \hspace{5pt}-b_{13}+b_{44} \geq -b_{13}-b_{14}+b_{33}, \\
		& \hspace{5pt}-b_{13}+b_{44} \geq -b_{13}-b_{25}+b_{33}, \\
		& \hspace{5pt}-b_{13}+b_{44} > -b_{24}-b_{25}+b_{33}, \\
(E_{11}) 	& \hspace{5pt}-b_{24}-b_{34}+b_{44}+b_{45} > b_{55}, \\
		& \hspace{5pt}-b_{24}-b_{34}+b_{44}+b_{45} \geq -b_{13}-b_{23}+b_{44}+b_{45}, \\
		& \hspace{5pt}-b_{24}-b_{34}+b_{44}+b_{45} \geq -b_{13}-b_{34}+b_{44}+b_{45}, \\
		& \hspace{5pt}-b_{24}-b_{34}+b_{44}+b_{45} > -b_{13}+b_{44}, \\
		& \hspace{5pt}-b_{24}-b_{34}+b_{44}+b_{45} > -b_{24}+b_{44}, \\
		& \hspace{5pt}-b_{24}-b_{34}+b_{44}+b_{45} > -b_{35}+b_{44}, \\
		& \hspace{5pt}-b_{24}-b_{34}+b_{44}+b_{45} \geq -b_{13}-b_{14}-b_{15}+b_{22}, \\
		& \hspace{5pt}-b_{24}-b_{34}+b_{44}+b_{45} \geq -b_{13}-b_{14}-b_{23}-b_{24}+b_{33}+b_{34}+b_{35}, \\
		& \hspace{5pt}-b_{24}-b_{34}+b_{44}+b_{45} \geq -b_{13}-b_{14}-b_{23}+b_{33}+b_{34}, \\
		& \hspace{5pt}-b_{24}-b_{34}+b_{44}+b_{45} \geq -b_{13}-b_{23}-b_{25}+b_{33}+b_{34}, \\
		& \hspace{5pt}-b_{24}-b_{34}+b_{44}+b_{45} \geq -b_{13}-b_{14}+b_{33}, \\
		& \hspace{5pt}-b_{24}-b_{34}+b_{44}+b_{45} \geq -b_{13}-b_{25}+b_{33}, \\
		& \hspace{5pt}-b_{24}-b_{34}+b_{44}+b_{45} \geq -b_{24}-b_{25}+b_{33}, \\		
(E_{12}) 	& \hspace{5pt}-b_{24}+b_{44} > b_{55}, \\
		& \hspace{5pt}-b_{24}+b_{44} \geq -b_{13}-b_{23}+b_{44}+b_{45}, \\
		& \hspace{5pt}-b_{24}+b_{44} \geq -b_{13}-b_{34}+b_{44}+b_{45}, \\
		& \hspace{5pt}-b_{24}+b_{44} \geq -b_{13}+b_{44}, \\
		& \hspace{5pt}-b_{24}+b_{44} \geq -b_{24}-b_{34}+b_{44}+b_{45}, \\
		& \hspace{5pt}-b_{24}+b_{44} > -b_{35}+b_{44}, \\
		& \hspace{5pt}-b_{24}+b_{44} \geq -b_{13}-b_{14}-b_{15}+b_{22}, \\
		& \hspace{5pt}-b_{24}+b_{44} \geq -b_{13}-b_{14}-b_{23}-b_{24}+b_{33}+b_{34}+b_{35}, \\
		& \hspace{5pt}-b_{24}+b_{44} \geq -b_{13}-b_{14}-b_{23}+b_{33}+b_{34}, \\
		& \hspace{5pt}-b_{24}+b_{44} \geq -b_{13}-b_{23}-b_{25}+b_{33}+b_{34}, \\
		& \hspace{5pt}-b_{24}+b_{44} \geq -b_{13}-b_{14}+b_{33}, \\
		& \hspace{5pt}-b_{24}+b_{44} \geq -b_{13}-b_{25}+b_{33}, \\
		& \hspace{5pt}-b_{24}+b_{44} \geq -b_{24}-b_{25}+b_{33}, \\		
(E_{13}) 	& \hspace{5pt}-b_{35}+b_{44} > b_{55}, \\
		& \hspace{5pt}-b_{35}+b_{44} \geq -b_{13}-b_{23}+b_{44}+b_{45}, \\
		& \hspace{5pt}-b_{35}+b_{44} \geq -b_{13}-b_{34}+b_{44}+b_{45}, \\
		& \hspace{5pt}-b_{35}+b_{44} \geq -b_{13}+b_{44}, \\
		& \hspace{5pt}-b_{35}+b_{44} \geq -b_{24}-b_{34}+b_{44}+b_{45}, \\
		& \hspace{5pt}-b_{35}+b_{44} \geq -b_{24}+b_{44}, \\
		& \hspace{5pt}-b_{35}+b_{44} \geq -b_{13}-b_{14}-b_{15}+b_{22}, \\
		& \hspace{5pt}-b_{35}+b_{44} \geq -b_{13}-b_{14}-b_{23}-b_{24}+b_{33}+b_{34}+b_{35}, \\
		& \hspace{5pt}-b_{35}+b_{44} \geq -b_{13}-b_{14}-b_{23}+b_{33}+b_{34}, \\
		& \hspace{5pt}-b_{35}+b_{44} \geq -b_{13}-b_{23}-b_{25}+b_{33}+b_{34}, \\
		& \hspace{5pt}-b_{35}+b_{44} \geq -b_{13}-b_{14}+b_{33}, \\
		& \hspace{5pt}-b_{35}+b_{44} \geq -b_{13}-b_{25}+b_{33}, \\
		& \hspace{5pt}-b_{35}+b_{44} \geq -b_{24}-b_{25}+b_{33}, \\
(E_{14}) 	& \hspace{5pt}b_{55} \geq -b_{13}-b_{23}+b_{44}+b_{45}, \\
		& \hspace{5pt}b_{55} \geq -b_{13}-b_{34}+b_{44}+b_{45}, \\
		& \hspace{5pt}b_{55} \geq -b_{13}+b_{44}, \\
		& \hspace{5pt}b_{55} \geq -b_{24}-b_{34}+b_{44}+b_{45}, \\
		& \hspace{5pt}b_{55} \geq -b_{24}+b_{44}, \\
		& \hspace{5pt}b_{55} \geq -b_{35}+b_{44}, \\
		& \hspace{5pt}b_{55} \geq -b_{13}-b_{14}-b_{15}+b_{22}, \\
		& \hspace{5pt}b_{55} \geq -b_{13}-b_{14}-b_{23}-b_{24}+b_{33}+b_{34}+b_{35}, \\
		& \hspace{5pt}b_{55} \geq -b_{13}-b_{14}-b_{23}+b_{33}+b_{34}, \\
		& \hspace{5pt}b_{55} \geq -b_{13}-b_{23}-b_{25}+b_{33}+b_{34}, \\
		& \hspace{5pt}b_{55} \geq -b_{13}-b_{14}+b_{33}, \\
		& \hspace{5pt}b_{55} \geq -b_{13}-b_{25}+b_{33}, \\
		& \hspace{5pt}b_{55} \geq -b_{24}-b_{25}+b_{33}.  
\end{align*} Then we define conditions $(F_j) \ (1 \leq j \leq 14)$ by replacing $>$ (resp. $\geq$) with $\geq$ (resp. $>$) in $(E_j)$. Let $b=(b_{ij}) \in \cB$. Then for $\tilde{e_k}(b) = (b'_{ij})$ where 
\begin{align*}
k=0 &: 
	\small \begin{cases}
	b'_{11} = b_{11} - 1,b'_{16} = b_{16} + 1, b'_{22} = b_{22} - 1, b'_{27} = b_{27} + 1, 
	b'_{36} = b_{36} - 1,  \\ b'_{38} = b_{38} + 1,  b'_{47} = b_{47} - 1, b'_{49} = b_{49} + 1, 
	b'_{59} = b_{59} - 1, b'_{5,10} = b_{5,10} + 1, \\ b'_{69} = b_{69} - 1, b'_{6,11} = b_{6,11} + 1 \ \text{if} \ (E_1)  \vspace{1pt}\\ 
	b'_{11} = b_{11} - 1,b'_{15} = b_{15} + 1, b'_{22} = b_{22} - 1, b'_{26} = b_{26} + 1, 
	b'_{35} = b_{35} - 1,  \\ b'_{38} = b_{38} + 1, b'_{46} = b_{46} - 1, b'_{49} = b_{49} + 1, 
	b'_{58} = b_{58} - 1, b'_{5,10} = b_{5,10} + 1, \\ b'_{69} = b_{69} - 1, b'_{6,11} = b_{6,11} + 1 \ \text{if} \ (E_2)  \vspace{1pt}\\ 
	b'_{11} = b_{11} - 1,b'_{15} = b_{15} + 1, b'_{22} = b_{22} - 1, b'_{24} = b_{24} + 1, 
	b'_{25} = b_{25} - 1, \\ b'_{26} = b_{26} + 1, b'_{34} = b_{34} - 1,  b'_{38} = b_{38} + 1, 
	b'_{46} = b_{46} - 1, b'_{47} = b_{47} + 1, \\ b'_{48} = b_{48} - 1, b'_{49} = b_{49} + 1  
	b'_{57} = b_{57} - 1, b'_{5,10} = b_{5,10} + 1, b'_{69} = b_{69} - 1, \\ b'_{6,11} = b_{6,11} + 1 \ \text{if} \ (E_3)  \vspace{1pt}\\ 
	b'_{11} = b_{11} - 1,b'_{14} = b_{14} + 1, b'_{22} = b_{22} - 1, b'_{26} = b_{26} + 1, 
	b'_{34} = b_{34} - 1,  \\ b'_{37} = b_{37} + 1, b'_{46} = b_{46} - 1, b'_{49} = b_{49} + 1,  
	b'_{57} = b_{57} - 1, b'_{5,10} = b_{5,10} + 1,\\ b'_{69} = b_{69} - 1, b'_{6,11} = b_{6,11} + 1 \ \text{if} \ (E_4)  \vspace{1pt}\\ 
	b'_{11} = b_{11} - 1,b'_{15} = b_{15} + 1, b'_{22} = b_{22} - 1, b'_{23} = b_{23} + 1, 
	b'_{25} = b_{25} - 1, \\ b'_{26} = b_{26} + 1, b'_{33} = b_{33} - 1,  b'_{38} = b_{38} + 1, 
	b'_{46} = b_{46} - 1, b'_{47} = b_{47} + 1, \\ b'_{48} = b_{48} - 1, b'_{49} = b_{49} + 1  
	b'_{57} = b_{57} - 1, b'_{58} = b_{58} + 1, b'_{59} = b_{59} - 1, \\ b'_{5,10} = b_{5,10} + 1, 
	b'_{68} = b_{68} - 1, b'_{6,11} = b_{6,11} + 1 \ \text{if} \ (E_5)  \vspace{1pt}\\ 
	b'_{11} = b_{11} - 1,b'_{14} = b_{14} + 1, b'_{22} = b_{22} - 1, b'_{25} = b_{25} + 1, 
	b'_{34} = b_{34} - 1, \\ b'_{36} = b_{36} + 1, b'_{45} = b_{45} - 1, b'_{49} = b_{49} + 1,  
	b'_{56} = b_{56} - 1, b'_{5,10} = b_{5,10} + 1,\\ b'_{69} = b_{69} - 1, b'_{6,11} = b_{6,11} + 1 \ \text{if} \ (E_6)  \vspace{1pt}\\ 
	b'_{11} = b_{11} - 1,b'_{14} = b_{14} + 1, b'_{22} = b_{22} - 1, b'_{23} = b_{23} + 1,  
	b'_{24} = b_{24} - 1, \\ b'_{26} = b_{26} + 1, b'_{33} = b_{33} - 1,  b'_{37} = b_{37} + 1, 
	b'_{46} = b_{46} - 1, b'_{48} = b_{48} + 1,\\ b'_{57} = b_{57} - 1, b'_{58} = b_{58} + 1,
	b'_{59} = b_{59} - 1, b'_{5,10} = b_{5,10} + 1, b'_{68} = b_{68} - 1,\\ b'_{6,11} = b_{6,11} + 1 \ \text{if} \ (E_7)  \vspace{1pt}\\ 
	b'_{11} = b_{11} - 1,b'_{13} = b_{13} + 1, b'_{22} = b_{22} - 1, b'_{26} = b_{26} + 1,  
	b'_{33} = b_{33} - 1, \\ b'_{37} = b_{37} + 1, b'_{46} = b_{46} - 1, b'_{48} = b_{48} + 1,  
	b'_{57} = b_{57} - 1, b'_{5,10} = b_{5,10} + 1,\\ b'_{68} = b_{68} - 1, b'_{6,11} = b_{6,11} + 1 \ \text{if} \ (E_8)  \vspace{1pt}\\ 
	b'_{11} = b_{11} - 1,b'_{14} = b_{14} + 1, b'_{22} = b_{22} - 1, b'_{23} = b_{23} + 1,  
	b'_{24} = b_{24} - 1, \\ b'_{25} = b_{25} + 1, b'_{33} = b_{33} - 1,  b'_{36} = b_{36} + 1, 
	b'_{45} = b_{45} - 1, b'_{49} = b_{49} + 1, \\ b'_{56} = b_{56} - 1, b'_{58} = b_{58} + 1  
	b'_{59} = b_{59} - 1, b'_{5,10} = b_{5,10} + 1, b'_{68} = b_{68} - 1, \\
	b'_{6,11} = b_{6,11} + 1 \ \text{if} \ (E_9)  \vspace{1pt}\\
	\end{cases}\\
	k=0&:
	\begin{cases}
	b'_{11} = b_{11} - 1, b'_{14} = b_{14} + 1, b'_{22} = b_{22} - 1, b'_{23} = b_{23} + 1,  
	b'_{24} = b_{24} - 1,\\ b'_{25} = b_{25} + 1, b'_{33} = b_{33} - 1,  b'_{34} = b_{34} + 1, 
	b'_{35} = b_{35} - 1, b'_{36} = b_{36} + 1,\\ b'_{44} = b_{44} - 1, b'_{49} = b_{49} + 1  
	b'_{56} = b_{56} - 1, b'_{57} = b_{57} + 1, b'_{59} = b_{59} - 1,\\ b'_{5,10} = b_{5,10} + 1, 
	b'_{67} = b_{67} - 1, b'_{6,11} = b_{6,11} + 1 \ \text{if} \ (E_{10})  \vspace{1pt}  \\
	b'_{11} = b_{11} - 1, b'_{13} = b_{13} + 1, b'_{22} = b_{22} - 1, b'_{25} = b_{25} + 1,  
	b'_{33} = b_{33} - 1, \\ b'_{36} = b_{36} + 1, b'_{45} = b_{45} - 1, b'_{48} = b_{48} + 1,  
	b'_{56} = b_{56} - 1, b'_{5,10} = b_{5,10} + 1, \\ b'_{68} = b_{68} - 1, b'_{6,11} = b_{6,11} + 1 \ \text{if} \ (E_{11})  \vspace{1pt}\\ 
	b'_{11} = b_{11} - 1,  b'_{13} = b_{13} + 1, b'_{22} = b_{22} - 1, b'_{25} = b_{25} + 1,  
	b'_{33} = b_{33} - 1,\\  b'_{34} = b_{34} + 1, b'_{35} = b_{35} - 1,  b'_{36} = b_{36} + 1, 
	b'_{44} = b_{44} - 1, b'_{48} = b_{48} + 1,\\ b'_{56} = b_{56} - 1, b'_{57} = b_{57} + 1  
	b'_{58} = b_{58} - 1, b'_{5,10} = b_{5,10} + 1, b'_{67} = b_{67} - 1,\\ b'_{6,11} = b_{6,11} + 1 \ \text{if} \ (E_{12})  \vspace{1pt}\\ 
	b'_{11} = b_{11} - 1, b'_{13} = b_{13} + 1, b'_{22} = b_{22} - 1, b'_{24} = b_{24} + 1,  
	b'_{33} = b_{33} - 1,\\ b'_{36} = b_{36} + 1, b'_{44} = b_{44} - 1,  b'_{47} = b_{47} + 1, 
	b'_{56} = b_{56} - 1, b'_{5,10} = b_{5,10} + 1, \\ b'_{67} = b_{67} - 1, b'_{6,11} = b_{6,11} + 1 \ \text{if} \ (E_{13})  \vspace{1pt}\\ 
	b'_{11} = b_{11} - 1,b'_{13} = b_{13} + 1, b'_{22} = b_{22} - 1, b'_{24} = b_{24} + 1,
	b'_{33} = b_{33} - 1, \\ b'_{35} = b_{35} + 1, b'_{44} = b_{44} - 1, b'_{46} = b_{46} + 1,  
	b'_{56} = b_{56} - 1, b'_{5,10} = b_{5,10} + 1,\\ b'_{67} = b_{67} - 1, b'_{6,11} = b_{6,11} + 1 \ \text{if} \ (E_{14})  
	\end{cases}
	\\
k=1 &: b'_{11} =b_{11} +1, b'_{12} =b_{12} -1, b'_{6,10} =b_{6,10} +1, b'_{6,11} =b_{6,11} -1 \\
k=2 &: 
	\begin{cases} 
	b'_{12} =b_{12} +1, b'_{13} =b_{13} -1, b'_{59} =b_{59} +1, b'_{5,10} =b_{5,10} -1 \ \text{if} \ b_{12} \geq b_{23}\\
	b'_{22} =b_{22} +1, b'_{23} =b_{23} -1, b'_{69} =b_{69} +1, b'_{6,10} =b_{6,10} -1 \ \text{if} \ b_{12} < b_{23} 
	\end{cases}
	\\
k=3 &: 
	\begin{cases} 
	b'_{13} =b_{13} +1, b'_{14} =b_{14} -1, b'_{48} =b_{48} +1, b'_{49} =b_{49} -1  \\
	 \hspace{1cm} \text{if} \ b_{13} \geq b_{24}, b_{13}+b_{23} \geq b_{24}+b_{34} \\ 
	 b'_{23} =b_{23} +1, b'_{24} =b_{24} -1, b'_{58} =b_{58} +1, b'_{59} =b_{59} -1  \\ 
	 \hspace{1cm} \text{if} \ b_{13} <  b_{24}, b_{23} \geq b_{34}\\ 
	 b'_{33} =b_{33} +1, b'_{34} =b_{34} -1, b'_{68} =b_{68} +1, b'_{69} =b_{69} -1  \\
	 \hspace{1cm} \text{if} \ b_{13}+b_{23} < b_{24}+b_{34}, b_{23} <  b_{34}
	 \end{cases}
	 \\
k=4 &: 
	\begin{cases} 
	b'_{14} =b_{14} +1, b'_{15} =b_{15} -1, b'_{37} =b_{37} +1, b'_{38} =b_{38} -1 \\ 
	\hspace{1cm} \text{if} \ b_{14} \geq b_{25}, b_{14}+b_{24} \geq b_{25}+b_{35}, b_{14}+b_{24}+b_{34} \geq b_{25}+b_{35}+b_{45}\\ 
	b'_{24} =b_{24} +1, b'_{25} =b_{25} -1, b'_{47} =b_{47} +1, b'_{48} =b_{48} -1 \\ 
	\hspace{1cm} \text{if} \ b_{14} < b_{25}, b_{24} \geq b_{35}, b_{24}+b_{34} \geq b_{35}+b_{45}
	\end{cases}
	\\	 
k=4 &: 
	\begin{cases} 
	b'_{34} =b_{34} +1, b'_{35} =b_{35} -1, b'_{57} =b_{57} +1, b'_{58} =b_{58} -1 \\ 
	\hspace{1cm} \text{if} \ b_{14}+b_{24} < b_{25}+b_{35}, b_{24} < b_{35}, b_{34} \geq b_{45}\\ 
	b'_{44} =b_{44} +1, b'_{45} =b_{45} -1, b'_{67} =b_{67} +1, b'_{68} =b_{68} -1 \\ 
	\hspace{1cm} \text{if} \ b_{14}+b_{24}+b_{34} < b_{25}+b_{35}+b_{45}, b_{24}+b_{34} < b_{35}+b_{45}, b_{34} < b_{45}
	\end{cases}
	\\
k=5 &: 
	\begin{cases} 
	b'_{25} =b_{25} +1, b'_{26} =b_{26} -1, b'_{36} =b_{36} +1, b'_{37} =b_{37} -1 \\ 
	\hspace{1cm}  \text{if} \ b_{25}+b_{44}+b_{45} \geq b_{33}+b_{34}\\ 
	b'_{45} =b_{45} +1, b'_{46} =b_{46} -1, b'_{56} =b_{56} +1, b'_{57} =b_{57} -1 \\ 
	\hspace{1cm}  \text{if} \ b_{25}+b_{44}+b_{45} < b_{33}+b_{34}
	\end{cases} 
	\\
k=6 &: 
	\begin{cases} 
	b'_{15} =b_{15} +1, b'_{16} =b_{16} -1, b'_{26} =b_{26} +1, b'_{27} =b_{27} -1 \\ 
	\hspace{1cm}  \text{if} \ b_{15}+b_{33}+b_{34}+b_{35} \geq b_{22}+b_{23}+b_{24}, \\
	\hspace{1.5cm} b_{15}+b_{33}+b_{34}+2b_{35}+b_{55} \geq b_{22}+b_{23}+b_{24}+b_{44}\\ 
	b'_{35} =b_{35} +1, b'_{36} =b_{36} -1, b'_{46} =b_{46} +1, b'_{47} =b_{47} -1 \\ 
	\hspace{1cm}  \text{if} \ b_{15}+b_{33}+b_{34}+b_{35} < b_{22}+b_{23}+b_{24}, b_{35}+b_{55} \geq b_{44}\\ 
	b'_{55} =b_{55} +1, b'_{56} =b_{56} -1, b'_{66} =b_{66} +1, b'_{67} =b_{67} -1 \\ 
	\hspace{1cm}  \text{if} \ b_{15}+b_{33}+b_{34}+2b_{35}+b_{55} < b_{22}+b_{23}+b_{24}+b_{44}, \\
	\hspace{1.5cm} b_{35}+b_{55} < b_{44} 
	\end{cases} \end{align*} and $b'_{ij} = b_{ij}$ otherwise. 
	
	Also $\tilde{f_k}(b) = (b'_{ij})$ where
\begin{align*}
k=0 &: 
	\small \begin{cases} 
	b'_{11} = b_{11} + 1,b'_{16} = b_{16} - 1, b'_{22} = b_{22} + 1, b'_{27} = b_{27} - 1,  
	b'_{36} = b_{36} + 1, \\ b'_{38} = b_{38} - 1, b'_{47} = b_{47} + 1, b'_{49} = b_{49} - 1,  
	b'_{59} = b_{59} + 1, b'_{5,10} = b_{5,10} - 1, \\ b'_{69} = b_{69} + 1, b'_{6,11} = b_{6,11} - 1 \ \text{if} \ (F_1)  \vspace{1pt}\\ 
	b'_{11} = b_{11} + 1,b'_{15} = b_{15} - 1, b'_{22} = b_{22} + 1, b'_{26} = b_{26} - 1,  
	b'_{35} = b_{35} + 1, \\ b'_{38} = b_{38} - 1, b'_{46} = b_{46} + 1, b'_{49} = b_{49} - 1,  
	b'_{58} = b_{58} + 1, b'_{5,10} = b_{5,10} - 1, \\ b'_{69} = b_{69} + 1, b'_{6,11} = b_{6,11} - 1 \ \text{if} \ (F_2)  \vspace{1pt}\\ 
	b'_{11} = b_{11} + 1,b'_{15} = b_{15} - 1, b'_{22} = b_{22} + 1, b'_{24} = b_{24} - 1,  
	b'_{25} = b_{25} + 1, \\ b'_{26} = b_{26} - 1, b'_{34} = b_{34} + 1,  b'_{38} = b_{38} - 1, 
	b'_{46} = b_{46} + 1, b'_{47} = b_{47} - 1,\\ b'_{48} = b_{48} + 1, b'_{49} = b_{49} - 1,
	b'_{57} = b_{57} + 1, b'_{5,10} = b_{5,10} - 1, b'_{69} = b_{69} + 1, \\ b'_{6,11} = b_{6,11} - 1 \ \text{if} \ (F_3)  \vspace{1pt}\\ 	
	b'_{11} = b_{11} + 1,b'_{14} = b_{14} - 1, b'_{22} = b_{22} + 1, b'_{26} = b_{26} - 1,  
	b'_{34} = b_{34} + 1, \\ b'_{37} = b_{37} - 1, b'_{46} = b_{46} + 1, b'_{49} = b_{49} - 1,  
	b'_{57} = b_{57} + 1, b'_{5,10} = b_{5,10} - 1, \\ b'_{69} = b_{69} + 1, b'_{6,11} = b_{6,11} - 1 \ \text{if} \ (F_4)  
\vspace{1pt}\\
	b'_{11} = b_{11} + 1,b'_{15} = b_{15} - 1, b'_{22} = b_{22} + 1, b'_{23} = b_{23} - 1, 
	b'_{25} = b_{25} + 1,\\ b'_{26} = b_{26} - 1, b'_{33} = b_{33} + 1,  b'_{38} = b_{38} - 1, 
	b'_{46} = b_{46} + 1, b'_{47} = b_{47} - 1,\\ b'_{48} = b_{48} + 1, b'_{49} = b_{49} - 1  
	b'_{57} = b_{57} + 1, b'_{58} = b_{58} - 1, b'_{59} = b_{59} + 1, \\ b'_{5,10} = b_{5,10} - 1, 
	b'_{68} = b_{68} + 1, b'_{6,11} = b_{6,11} - 1 \ \text{if} \ (F_5)  \vspace{1pt}\\ 
	b'_{11} = b_{11} + 1,b'_{14} = b_{14} - 1, b'_{22} = b_{22} + 1, b'_{25} = b_{25} - 1, 
	b'_{34} = b_{34} + 1,\\  b'_{36} = b_{36} - 1, b'_{45} = b_{45} + 1, b'_{49} = b_{49} - 1,  
	b'_{56} = b_{56} + 1, b'_{5,10} = b_{5,10} - 1,\\ b'_{69} = b_{69} + 1, b'_{6,11} = b_{6,11} - 1 \ \text{if} \ (F_6)  \vspace{1pt}\\ 
	b'_{11} = b_{11} + 1,b'_{14} = b_{14} - 1, b'_{22} = b_{22} + 1, b'_{23} = b_{23} - 1, 
	b'_{24} = b_{24} + 1,\\ b'_{26} = b_{26} - 1, b'_{33} = b_{33} + 1,  b'_{37} = b_{37} - 1, 
	b'_{46} = b_{46} + 1, b'_{48} = b_{48} - 1,\\ b'_{57} = b_{57} + 1, b'_{58} = b_{58} - 1, 
	b'_{59} = b_{59} + 1, b'_{5,10} = b_{5,10} - 1, b'_{68} = b_{68} + 1,\\ b'_{6,11} = b_{6,11} - 1 \ \text{if} \ (F_7)  \vspace{1pt} \\
	b'_{11} = b_{11} + 1,b'_{13} = b_{13} - 1, b'_{22} = b_{22} + 1, b'_{26} = b_{26} - 1, 
	b'_{33} = b_{33} + 1,\\  b'_{37} = b_{37} - 1, b'_{46} = b_{46} + 1, b'_{48} = b_{48} - 1,  
	b'_{57} = b_{57} + 1, b'_{5,10} = b_{5,10} - 1, \\ b'_{68} = b_{68} + 1, b'_{6,11} = b_{6,11} - 1 \ \text{if} \ (F_8)  \vspace{1pt}\\ 
	b'_{11} = b_{11} + 1,b'_{14} = b_{14} - 1, b'_{22} = b_{22} + 1, b'_{23} = b_{23} - 1, 
	b'_{24} = b_{24} + 1,\\ b'_{25} = b_{25} - 1, b'_{33} = b_{33} + 1,  b'_{36} = b_{36} - 1, 
	b'_{45} = b_{45} + 1, b'_{49} = b_{49} - 1,\\ b'_{56} = b_{56} + 1, b'_{58} = b_{58} - 1  
	b'_{59} = b_{59} + 1, b'_{5,10} = b_{5,10} - 1, b'_{68} = b_{68} + 1,\\ b'_{6,11} = b_{6,11} - 1 \ \text{if} \ (F_9)  \vspace{1pt}\\ 
	\end{cases}\\ 
	k=0 &: 
	\small \begin{cases}
	b'_{11} = b_{11} + 1, b'_{14} = b_{14} - 1, b'_{22} = b_{22} + 1, b'_{23} = b_{23} - 1,  
	b'_{24} = b_{24} + 1,\\ b'_{25} = b_{25} - 1, b'_{33} = b_{33} + 1,  b'_{34} = b_{34} - 1, 
	b'_{35} = b_{35} + 1, b'_{36} = b_{36} - 1,\\ b'_{44} = b_{44} + 1, b'_{49} = b_{49} - 1  
	b'_{56} = b_{56} + 1, b'_{57} = b_{57} - 1, b'_{59} = b_{59} + 1, \\ b'_{5,10} = b_{5,10} - 1, 
	b'_{67} = b_{67} + 1, b'_{6,11} = b_{6,11} - 1 \ \text{if} \ (F_{10})  \vspace{1pt}\\ 
	b'_{11} = b_{11} + 1, b'_{13} = b_{13} - 1, b'_{22} = b_{22} + 1, b'_{25} = b_{25} - 1, 
	b'_{33} = b_{33} + 1,\\  b'_{36} = b_{36} - 1, b'_{45} = b_{45} + 1, b'_{48} = b_{48} - 1,  
	b'_{56} = b_{56} + 1, b'_{5,10} = b_{5,10} - 1,\\ b'_{68} = b_{68} + 1, b'_{6,11} = b_{6,11} - 1 \ \text{if} \ (F_{11})  \vspace{1pt}\\ 
	b'_{11} = b_{11} + 1,  b'_{13} = b_{13} - 1, b'_{22} = b_{22} + 1, b'_{25} = b_{25} - 1,  
	b'_{33} = b_{33} + 1,\\ b'_{34} = b_{34} - 1, b'_{35} = b_{35} + 1,  b'_{36} = b_{36} - 1, 
	b'_{44} = b_{44} + 1, b'_{48} = b_{48} - 1,\\ b'_{56} = b_{56} + 1, b'_{57} = b_{57} - 1  
	b'_{58} = b_{58} + 1, b'_{5,10} = b_{5,10} - 1, b'_{67} = b_{67} + 1,\\ b'_{6,11} = b_{6,11} - 1 \ \text{if} \ (F_{12})  \vspace{1pt}\\ 
	b'_{11} = b_{11} + 1, b'_{13} = b_{13} - 1, b'_{22} = b_{22} + 1, b'_{24} = b_{24} - 1,  
	b'_{33} = b_{33} + 1,\\ b'_{36} = b_{36} - 1, b'_{44} = b_{44} + 1,  b'_{47} = b_{47} - 1, 
	b'_{56} = b_{56} + 1, b'_{5,10} = b_{5,10} - 1,\\ b'_{67} = b_{67} + 1, b'_{6,11} = b_{6,11} - 1 \ \text{if} \ (F_{13})  \vspace{1pt}\\ 
	b'_{11} = b_{11} + 1,b'_{13} = b_{13} - 1, b'_{22} = b_{22} + 1, b'_{24} = b_{24} - 1, 
	b'_{33} = b_{33} + 1, \\ b'_{35} = b_{35} - 1, b'_{44} = b_{44} + 1, b'_{46} = b_{46} - 1,  
	b'_{56} = b_{56} + 1, b'_{5,10} = b_{5,10} - 1,\\ b'_{67} = b_{67} + 1, b'_{6,11} = b_{6,11} - 1 \ \text{if} \ (F_{14})  
	\end{cases}
	\\
k=1 &: b'_{11} =b_{11} -1, b'_{12} =b_{12} +1, b'_{6,10} =b_{6,10} -1, b'_{6,11} =b_{6,11} +1 \\
k=2 &: 
	\begin{cases} 
	b'_{12} =b_{12} -1, b'_{13} =b_{13} +1, b'_{59} =b_{59} -1, b'_{5,10} =b_{5,10} +1 \ \text{if} \ b_{12} > b_{23}\\
	b'_{22} =b_{22} -1, b'_{23} =b_{23} +1, b'_{69} =b_{69} -1, b'_{6,10} =b_{6,10} +1 \ \text{if} \ b_{12} \leq b_{23} 
	\end{cases}
	\\
k=3 &: 
	\begin{cases} 
	b'_{13} =b_{13} -1, b'_{14} =b_{14} +1, b'_{48} =b_{48} -1, b'_{49} =b_{49} +1  \\
	 \hspace{1cm} \text{if} \ b_{13} > b_{24}, b_{13}+b_{23} > b_{24}+b_{34} \\ 
	 b'_{23} =b_{23} -1, b'_{24} =b_{24} +1, b'_{58} =b_{58} -1, b'_{59} =b_{59} +1  \\ 
	 \hspace{1cm} \text{if} \ b_{13} \leq  b_{24}, b_{23} > b_{34}\\ 
	 b'_{33} =b_{33} -1, b'_{34} =b_{34} +1, b'_{68} =b_{68} -1, b'_{69} =b_{69} +1  \\
	 \hspace{1cm} \text{if} \ b_{13}+b_{23} \leq b_{24}+b_{34}, b_{23} \leq  b_{34}
	 \end{cases}
	 \\
k=4 &: 
	\begin{cases} 
	b'_{14} =b_{14} -1, b'_{15} =b_{15} +1, b'_{37} =b_{37} -1, b'_{38} =b_{38} +1 \\ 
	\hspace{1cm} \text{if} \ b_{14} > b_{25}, b_{14}+b_{24} > b_{25}+b_{35}, b_{14}+b_{24}+b_{34} > b_{25}+b_{35}+b_{45}\\ 
	b'_{24} =b_{24} -1, b'_{25} =b_{25} +1, b'_{47} =b_{47} -1, b'_{48} =b_{48} +1 \\ 
	\hspace{1cm} \text{if} \ b_{14} \leq b_{25}, b_{24} > b_{35}, b_{24}+b_{34} > b_{35}+b_{45}\\ 
	b'_{34} =b_{34} -1, b'_{35} =b_{35} +1, b'_{57} =b_{57} -1, b'_{58} =b_{58} +1 \\ 
	\hspace{1cm} \text{if} \ b_{14}+b_{24} \leq b_{25}+b_{35}, b_{24} \leq b_{35}, b_{34} > b_{45}\\ 
	b'_{44} =b_{44} -1, b'_{45} =b_{45} +1, b'_{67} =b_{67} -1, b'_{68} =b_{68} +1 \\ 
	\hspace{1cm} \text{if} \ b_{14}+b_{24}+b_{34} \leq b_{25}+b_{35}+b_{45}, b_{24}+b_{34} \leq b_{35}+b_{45}, b_{34} \leq b_{45}
	\end{cases}
	\\
k=5 &: 
	\begin{cases} 
	b'_{25} =b_{25} -1, b'_{26} =b_{26} +1, b'_{36} =b_{36} -1, b'_{37} =b_{37} +1 \\ 
	\hspace{1cm}  \text{if} \ b_{25}+b_{44}+b_{45} > b_{33}+b_{34}\\ 
	b'_{45} =b_{45} -1, b'_{46} =b_{46} +1, b'_{56} =b_{56} -1, b'_{57} =b_{57} +1 \\ 
	\hspace{1cm}  \text{if} \ b_{25}+b_{44}+b_{45} \leq b_{33}+b_{34}
	\end{cases} 
	\\
k=6 &: 
	\begin{cases} 
	b'_{15} =b_{15} -1, b'_{16} =b_{16} +1, b'_{26} =b_{26} -1, b'_{27} =b_{27} +1 \\ 
	\hspace{1cm}  \text{if} \ b_{15}+b_{33}+b_{34}+b_{35} > b_{22}+b_{23}+b_{24}, \\
	\hspace{1.5cm} b_{15}+b_{33}+b_{34}+2b_{35}+b_{55} > b_{22}+b_{23}+b_{24}+b_{44}\\ 
	b'_{35} =b_{35} -1, b'_{36} =b_{36} +1, b'_{46} =b_{46} -1, b'_{47} =b_{47} +1 \\ 
	\hspace{1cm}  \text{if} \ b_{15}+b_{33}+b_{34}+b_{35} \leq b_{22}+b_{23}+b_{24}, b_{35}+b_{55} > b_{44}\\ 
	b'_{55} =b_{55} -1, b'_{56} =b_{56} +1, b'_{66} =b_{66} -1, b'_{67} =b_{67} +1 \\ 
	\hspace{1cm}  \text{if} \ b_{15}+b_{33}+b_{34}+2b_{35}+b_{55} \leq b_{22}+b_{23}+b_{24}+b_{44}, b_{35}+b_{55} \leq b_{44}
	\end{cases} 
\end{align*} and $b'_{ij} = b_{ij}$ otherwise. 

For $b \in B^{6,l}$ if $\tilde{e}_k(b)$ or $\tilde{f}_k(b)$ does not belong to $B^{6,l}$, then we assume it to be $0$. The maps 
$\veps_k(b), \ \vphi_k(b)$ and $\text{wt}_k(b)$ for $k=0,1,2,3,4,5,6$ are given as follows. We observe that 
$\text{wt}_k(b) = \vphi_k(b) - \veps_k(b)$,
$\vphi(b) = \sum_{k=0}^6\vphi_k(b)\L_k$, $\veps(b) = \sum_{k=0}^6\veps_k(b)\L_k$ and $\text{wt}(b) = \vphi(b) - \veps(b)$.
\begin{align*}
\veps_0(b) &= \begin{cases} l + \mathcal{A}_1  \, \ \text{if} \,  \  b \in B^{6,l}, \\
\mathcal{A}_1  \, \  \text{if} \,  \  b \in B^{6, \infty}, \end{cases}\\
\text{where} \ \mathcal{A}_1 &= \text{max} \{-b_{56}-b_{57}-b_{58}-b_{59}-b_{5,10}, -b_{13}-b_{23}-b_{46}-b_{47}-b_{48}-b_{49}, \\ 
			&	\hspace{15pt}  -b_{13}-b_{34}-b_{46}-b_{47}-b_{48}-b_{49}, -b_{13}-b_{45}-b_{46}-b_{47}-b_{48}-b_{49}, \\
			&	\hspace{15pt} -b_{24}-b_{34}-b_{46}-b_{47}-b_{48}-b_{49}, -b_{24}-b_{45}-b_{46}-b_{47}-b_{48} -b_{49},\\
			&	\hspace{15pt} -b_{35}-b_{45}-b_{46}-b_{47}-b_{48}-b_{49}, -b_{13}-b_{14}-b_{15}-b_{23}-b_{24} -b_{25}\\
			&	\hspace{15pt} -b_{26}-b_{27}, -b_{13}-b_{14}-b_{23}-b_{24}-b_{36}-b_{37}-b_{38}, -b_{13}-b_{14} -b_{23} \\
			&	\hspace{15pt}  -b_{35}-b_{36}-b_{37}-b_{38}, -b_{13}-b_{23}-b_{25}-b_{35}-b_{36}-b_{37}-b_{38}, -b_{13} \\
			&	\hspace{15pt}  -b_{14}-b_{34}-b_{35}-b_{36}-b_{37}-b_{38}, -b_{13}-b_{25}-b_{34}-b_{35}-b_{36} -b_{37}\\
			&	\hspace{15pt} -b_{38}, -b_{24}-b_{25}-b_{34}-b_{35}-b_{36}-b_{37}-b_{38} \}.\\
\veps_1(b) 	&= b_{12},\\
\veps_2(b) 	&= \text{max} \{b_{13}, -b_{12}+b_{13}+b_{23}\},\\
\veps_3(b) 	&= \text{max} \{b_{14}, -b_{13}+b_{14}+b_{24}, -b_{13}+b_{14}-b_{23}+b_{24}+b_{34}\},\\
\veps_4(b)	&= \text{max} \{b_{15},  -b_{14}+b_{15}+b_{25}, -b_{14}+b_{15}-b_{24}+b_{25}+b_{35}, \\
			& \hspace{15pt} -b_{14}+b_{15}-b_{24}+b_{25}-b_{34}+b_{35}+b_{45}\}, \\
\veps_5(b) 	&= \text{max} \{b_{11}+b_{12}+b_{13}+b_{14}-b_{22}-b_{23}-b_{24}-b_{25},\\
			&\hspace{15pt} b_{11}+b_{12}+b_{13}+b_{14}-b_{22}-b_{23}-b_{24}-2b_{25}+b_{33}+b_{34}-b_{44}-b_{45}\},\\
\veps_6(b)	&= \begin{cases} l + 	\mathcal{A}_2 	 \ \text{if} \ b \in B^{6,l},\\
\mathcal{A}_2  \ \ \text{if} \ b \in B^{6,\infty}, \end{cases}
\end{align*}
\begin{align*}
\text{where} \  \mathcal{A}_2  =	&\text{max} \{-b_{11}-b_{12}-b_{13}-b_{14}-b_{15}, -b_{11}-b_{12}-b_{13}-b_{14}-2b_{15} \\
				& +b_{22}+b_{23}+b_{24}-b_{33}-b_{34}-b_{35}, -b_{11}-b_{12}-b_{13}-b_{14}-2b_{15} \\
				& +b_{22}+b_{23}+b_{24}-b_{33}-b_{34}-2b_{35}+b_{44}-b_{55}\}.
				\end{align*}
				\begin{align*}
\vphi_0(b) 	&= \begin{cases} l+ \mathcal{A}_3, \ \text{if} \   b \in B^{6,l} , \\
\mathcal{A}_3, \ \text{if} \ b \in B^{6, \infty}, \end{cases} \\
\text{where} \ \mathcal{A}_3  &=\text{max}\{-b_{11}-b_{12}+b_{23}+b_{24}+b_{25}+b_{26}+b_{27}-b_{56}-b_{57}-b_{58}-b_{59}-b_{5,10},\\
				&\hspace{15pt} -b_{11}-b_{12}-b_{13}+b_{24}+b_{25}+b_{26}+b_{27}-b_{46}-b_{47}-b_{48} -b_{49},\\
				&\hspace{15pt} -b_{11}-b_{12}-b_{13}+b_{23}+b_{24}+b_{25}+b_{26}+b_{27}-b_{34}-b_{46}-b_{47} -b_{48}-b_{49},\\
				&\hspace{15pt} -b_{11}-b_{12}-b_{13}+b_{23}+b_{24}+b_{25}+b_{26}+b_{27}-b_{45}-b_{46}-b_{47}-b_{48}-b_{49},\\
				&\hspace{15pt}  -b_{11}-b_{12}+b_{23}+b_{25}+b_{26}+b_{27}-b_{34}-b_{46}-b_{47}b_{48}-b_{49}, \\
				&\hspace{15pt} -b_{11}-b_{12}+b_{23}+b_{25}+b_{26}+b_{27}-b_{45}-b_{46}-b_{47}-b_{48}-b_{49},\\
				&\hspace{15pt}  -b_{11}-b_{12}+b_{23}+b_{24}+b_{25}+b_{26}+b_{27}-b_{35}-b_{45}-b_{46}-b_{47}-b_{48}-b_{49}, \\
				&\hspace{15pt} -b_{11}-b_{12}-b_{13}-b_{14}-b_{15}, -b_{11}-b_{12}-b_{13}-b_{14}+b_{25}+b_{26}+b_{27}-b_{36}-b_{37} \\
				&\hspace{15pt} -b_{38}, -b_{11}-b_{12}-b_{13}-b_{14}+b_{24}+b_{25}+b_{26} +b_{27}-b_{35}-b_{36}-b_{37}-b_{38},\\
				&\hspace{15pt}  -b_{11}-b_{12}-b_{13}+b_{24}+b_{26}+b_{27}-b_{35} -b_{36}-b_{37}-b_{38},-b_{11}-b_{12}-b_{13}\\
				&\hspace{15pt} -b_{14}+b_{23}+b_{24}+b_{25}+b_{26}+b_{27}-b_{34}-b_{35}-b_{36}-b_{37}-b_{38},\\
				&\hspace{15pt}  -b_{11}-b_{12}-b_{13}+b_{23}+b_{24}+b_{26}+b_{27} -b_{34}-b_{35}-b_{36}-b_{37}-b_{38},\\
				&\hspace{15pt} -b_{11}-b_{12}+b_{23}+b_{26}+b_{27}-b_{34}-b_{35} -b_{36}-b_{37}-b_{38} \}.  \\
\vphi_1(b) 	&= b_{11}-b_{22},\\
\vphi_2(b) 	&= \text{max} \{b_{22}-b_{33}, b_{12}+b_{22}-b_{23}-b_{33}\},\\
\vphi_3(b) 	&= \text{max} \{b_{33}-b_{44}, b_{23}+b_{33}-b_{34}-b_{44}, b_{13}+b_{23}-b_{24}+b_{33}-b_{34}-b_{44}\},\\
\vphi_4(b) 	&= \text{max} \{b_{44}-b_{55}, b_{34}+b_{44}-b_{45}-b_{55}, b_{24}+b_{34}-b_{35}+b_{44}-b_{45}-b_{55}, \\
			&\hspace{15pt} b_{14}+b_{24}-b_{25}+b_{34}-b_{35}+b_{44}-b_{45}-b_{55}\},\\
\vphi_5(b) 	&= \text{max} \{b_{45}, b_{25}-b_{33}-b_{34}+b_{44}+2b_{45}\},\\			
\vphi_6(b) 	&= \begin{cases} l+ \mathcal{A}_4, \ \text{if} \ b \in B^{6, i}, \\
\mathcal{A}_4, \ \text{if} \ b \in B^{6, \infty}, \end{cases} 
\end{align*}
\begin{align*}
\text{where} \ \mathcal{A}_4 =	&\text{max} \{-b_{56}-b_{57}-b_{58}-b_{59}-b_{5,10}, b_{35}-b_{44}+b_{55}-b_{56}-b_{57}\\
				&	-b_{58}-b_{59}-b_{5,10}, b_{15}-b_{22}-b_{23}-b_{24}+b_{33}+b_{34}+2b_{35}-b_{44}\\
				 & +b_{55}-b_{56}-b_{57}-b_{58}-b_{59}-b_{5,10}\}. 
\end{align*}
\begin{align*}
\text{wt}_0(b) 	&= -b_{11}-b_{12} +b_{23}+b_{24}+b_{25}+b_{26}+b_{27},\\
\text{wt}_1(b) 	&= b_{11}-b_{12}-b_{22},\\
\text{wt}_2(b) 	&= b_{12} -b_{13}+b_{22}-b_{23}-b_{33},\\
\text{wt}_3(b) 	&= b_{13}-b_{14}+b_{23}-b_{24}+b_{33}-b_{34}-b_{44},\\
\text{wt}_4(b)	&= b_{14}-b_{15} +b_{24}-b_{25}+b_{34}-b_{35}+b_{44}-b_{45} -b_{55},\\
\text{wt}_5(b) 	&= -b_{11}-b_{12} -b_{13}-b_{14}+b_{22}+b_{23}+b_{24}+2b_{25}-b_{33}-b_{34}+b_{44}+2b_{45},\\
\text{wt}_6(b)	&= b_{11}+b_{12} +b_{13}+b_{14}+2b_{15}-b_{22}-b_{23}-b_{24}+b_{33}+b_{34}+2b_{35}-b_{44}+b_{55}\\
			&\hspace{15pt} -b_{56}-b_{57}-b_{58}-b_{59}-b_{5,10}.
\end{align*}

\noindent Choose elements $b^0_{\bf{0}}, b^0_{\bf{1}}, b^0_{\bf{2}}, b^0_{\bf{3}}, b^0_{\bf{4}}, b^0_{\bf{5}}, b^0_{\bf{6}}$ where
\begin{align*} 
(b^0_{\bf{0}})_{ij} &=1 &\text{if} \ (i,j) &= (1,6),(2,7),(3,8),(4,9),(5,10),(6,11),  \\
(b^0_{\bf{1}})_{ij} &=1 &\text{if} \ (i,j) &= (1,1),(2,6),(3,7),(4,8),(5,9), (6,10),  \\
(b^0_{\bf{2}})_{ij} &=1 &\text{if} \ (i,j) &= (1,1),(1,5),(2,2),(2,6),(3,6),(3,8),(4,7),(4,9),(5,8),(5,10),\\
							&&& \qquad(6,9),(6,11),  \\
(b^0_{\bf{3}})_{ij} &=1 &\text{if} \ (i,j) &= (1,1),(1,4),(2,2),(2,5),(3,3),(3,6),(4,6),(4,9),(5,7),(5,10),\\
							&&& \qquad (6,8),(6,11),  \\
(b^0_{\bf{4}})_{ij} &=1 &\text{if} \ (i,j) &= (1,1),(1,3),(2,2),(2,4),(3,3),(3,5),(4,4),(4,6),(5,6),(5,10),\\
							&&& \qquad (6,7),(6,11),  \\
(b^0_{\bf{5}})_{ij} &=1 &\text{if} \ (i,j) &= (1,2),(2,3),(3,4),(4,5),(5,6),(6,11),  \\
(b^0_{\bf{6}})_{ij} &=1 &\text{if} \ (i,j) &= (1,1),(2,2),(3,3),(4,4),(5,5),(6,6), 
\end{align*} and $(b^0_{\bf{k}})_{ij} =0$ \ otherwise,\ for $0 \leq k \leq 6$.

As shown in \cite{KMN2}, the crystal $B^{6,l}$ is a perfect crystal with the set of minimal elements:
\begin{align*}
(B^{6,l})_\text{min}& = \{b \in B^{6,l} \mid \langle {\bf c} , \veps (b)\rangle = l\}\\
& =\left \{ \sum_{k=0}^6 a_k b^0_{\bf k} \mid  a_k \in \mZ_{\geq 0},\ a_0+a_1+2a_2+2a_3+2a_4+a_5+a_6=l \right\}.
\end{align*}

For $\lambda \in P_{cl}$, consider the crystal  $T_{\lambda} = \{t_\lambda\}$ with
\begin{align*}
&\tilde{e_k} (t_\lambda) = \tilde{f_k} (t_\lambda) =0,	&\veps_k (t_\lambda) = \vphi_k (t_\lambda) =-\infty,  \\ 
&\text{wt}(t_\lambda)=\lambda,
\end{align*}
for $k=0,1,2,3,4,5,6$. Then for $\lambda, \mu \in P_{cl}$, $T_{\lambda}\otimes B^{6,l} \otimes T_{\mu}$ is a crystal with the structure given by 
\begin{align*}
\tilde{e_k}(t_{\lambda}\otimes b \otimes t_{\mu})&= t_{\lambda}\otimes \tilde{e_k}b \otimes t_{\mu}, &\tilde{f_k}(t_{\lambda}\otimes b \otimes t_{\mu})&= t_{\lambda}\otimes \tilde{f_k}b \otimes t_{\mu}, \\
\veps_k(t_{\lambda}\otimes b \otimes t_{\mu})&= \veps_k(b) - \langle \check\alpha_k,\lambda \rangle, &\vphi_k(t_{\lambda}\otimes b \otimes t_{\mu})&= \vphi_k(b) + \langle \check\alpha_k,\mu \rangle, \\
\text{wt}(t_{\lambda}\otimes b \otimes t_{\mu})&= \lambda+\mu+\text{wt}(b)
\end{align*}
where $t_{\lambda}\otimes b \otimes t_{\mu} \in T_{\lambda}\otimes B^{6,l} \otimes T_{\mu}$.

The notion of a coherent family of perfect crystals and its limit is defined in \cite{KKM}. In the following theorem we prove that the family of $D_6^{(1)}$ crystals $\Bfamsix$ form a coherent family with limit $B^{6, \infty}$ containing the special vector $b^{\infty} = {\bf{0}}$ (i.e. $(b^{\infty})_{ij} = 0$ for $i \leq j \leq i+5,\ 1 \leq i \leq 6$). 
\begin{theorem} \label{perfectcrystal} The family of perfect crystals $\Bfamsix$ forms a coherent family and the crystal $B^{6,\infty}$ is its limit.
\end{theorem}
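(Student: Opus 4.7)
The plan is to verify the axioms for a coherent family with limit given in \cite{KKM}: the crystal $B^{6,\infty}$ with distinguished vector $b^\infty = \mathbf{0}$ must satisfy $\mathrm{wt}(b^\infty) = 0$ and $\varepsilon_k(b^\infty) = \varphi_k(b^\infty) = 0$ for all $k \in I$, and for each $l \geq 1$ and each minimal element $b^0 \in (B^{6,l})_{\min}$ there must be a strict crystal embedding $T_{\varepsilon(b^0)} \otimes B^{6,l} \otimes T_{-\varphi(b^0)} \hookrightarrow B^{6,\infty}$ that sends $t_{\varepsilon(b^0)} \otimes b^0 \otimes t_{-\varphi(b^0)}$ to $b^\infty$. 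Crystal structure on $B^{6,\infty}$ itself is inherited: the piecewise-linear formulas for $\tilde{e}_k, \tilde{f}_k, \varepsilon_k, \varphi_k, \mathrm{wt}$ make sense on $B^{6,\infty}$ once the $l$-shifts (visible only in $\varepsilon_0, \varepsilon_6, \varphi_0, \varphi_6$) are removed and nonnegativity of entries is dropped, the linear equality constraints are preserved by every operator rule, and the Kashiwara axioms transfer since their verification on $B^{6,l}$ never invokes $l$. Substituting $b = \mathbf{0}$ makes every $\max$ in $\mathcal{A}_1, \dots, \mathcal{A}_4$ and every other formula vanish, giving $\mathrm{wt}(\mathbf{0}) = 0$ and $\varepsilon_k(\mathbf{0}) = \varphi_k(\mathbf{0}) = 0$.

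For the coherence maps I define
\[
\sigma_l^{b^0} : T_{\varepsilon(b^0)} \otimes B^{6,l} \otimes T_{-\varphi(b^0)} \longrightarrow B^{6,\infty}, \qquad t_{\varepsilon(b^0)} \otimes b \otimes t_{-\varphi(b^0)} \longmapsto b - b^0,
\]
with entrywise subtraction. Row sums of $b - b^0$ vanish, the equality constraints $\sum_{j=i}^{6-t} b_{ij} = \sum_{j=i+t}^{5+t} b_{i+t,j}$ persist, and nonnegativity is not required in $B^{6,\infty}$, so $\sigma_l^{b^0}$ is well defined and sends the image of $b^0$ to $b^\infty$. The key algebraic observation is that each branch-selecting inequality in the operator rules --- the conditions $(E_j), (F_j)$ for $k=0$ and the various two-, three-, and four-way branchings for $k = 2, 3, 4, 5, 6$ --- has the form $L(b) \leq R(b)$ or $L(b) \geq R(b)$ with $L, R$ linear in the $b_{ij}$, and one checks by direct evaluation from $b^0 = \sum_{k=0}^6 a_k b^0_{\mathbf{k}}$ that $L(b^0) = R(b^0)$ for every minimal $b^0$. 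For instance, $(b^0)_{15} + (b^0)_{33} + (b^0)_{34} + (b^0)_{35} = a_2 + a_3 + 2a_4 + a_5 + a_6 = (b^0)_{22} + (b^0)_{23} + (b^0)_{24}$, matching the first condition in the $k=6$ branching; the other branchings check analogously. Consequently the chosen branch and the subsequent ``add or subtract $1$'' recipe agree for $b$ in $B^{6,l}$ and for $b - b^0$ in $B^{6,\infty}$, so $\sigma_l^{b^0}$ commutes with every $\tilde{e}_k, \tilde{f}_k$.

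Weight compatibility is immediate from linearity in the entries, and the $l$-shifts in $\varepsilon_0, \varepsilon_6, \varphi_0, \varphi_6$ cancel the $l$-part of $\varepsilon(b^0), \varphi(b^0)$ through the tensor factors $T_{\varepsilon(b^0)}, T_{-\varphi(b^0)}$, so each $\sigma_l^{b^0}$ is a strict crystal embedding. Compatibility as $l$ increases follows from linearity of subtraction (take $b^0_{l+1} = b^0_l + b^0_{\mathbf{6}}$), and $\bigcup_{l, b^0} \mathrm{Im}(\sigma_l^{b^0}) = B^{6,\infty}$ since every $\beta \in B^{6,\infty}$ equals $b - b^0$ for $b$ nonnegative once $b^0$ is a sufficiently large multiple of $b^0_{\mathbf{6}}$; this identifies $B^{6,\infty}$ as the limit. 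The principal obstacle is the lengthy but essentially mechanical case-by-case verification of the balance identities $L(b^0) = R(b^0)$ at each of the many branches, together with the corresponding commutation of $\sigma_l^{b^0}$ with each operator recipe.
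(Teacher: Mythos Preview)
Your approach to parts (1) and (2) is essentially the paper's: the distinguished element is $b^\infty=\mathbf{0}$, the coherence map is entrywise subtraction of a minimal element $b^0$, and commutation with $\tilde e_k,\tilde f_k$ reduces to the observation that every branching inequality $L(b)\lessgtr R(b)$ satisfies $L(b^0)=R(b^0)$ at any minimal $b^0$, so the same branch is selected for $b$ and for $b-b^0$. This is exactly what the paper asserts when it says the conditions for $\tilde e_k$ on $b'=b-b^0$ hold if and only if they hold on $b$.

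The surjectivity argument (part (3)), however, has a real gap. You claim that for $\beta\in B^{6,\infty}$ one may take $b^0$ to be a sufficiently large multiple $N\cdot b^0_{\mathbf 6}$ and then $b=\beta+b^0$ will lie in $B^{6,N}$. But $b^0_{\mathbf 6}$ has nonzero entries only at the positions $(i,i)$, $1\le i\le 6$; adding $N\cdot b^0_{\mathbf 6}$ to $\beta$ increases only the diagonal entries $\beta_{ii}$ and leaves every off-diagonal entry unchanged. If, say, $\beta_{12}<0$, then $b_{12}=\beta_{12}<0$ for every $N$, so $b\notin B^{6,N}$. Even nonnegativity is not enough: one must also recover the inequality constraints $\sum_{j=i}^{t}b_{ij}\ge\sum_{j=i+1}^{t+1}b_{i+1,j}$, which are imposed on $B^{6,l}$ but not on $B^{6,\infty}$. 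No single $b^0_{\mathbf k}$ (or multiple thereof) suffices; the minimal element must be tailored to $\beta$. The paper does this explicitly: given $\beta=(b'_{ij})$ it defines $a_1,\dots,a_6,a_0$ by specific $\max$-formulas in the $b'_{ij}$, builds the corresponding minimal $b^0$, and then verifies entry by entry that $b=\beta+b^0$ satisfies both $b_{ij}\ge 0$ and the partial-sum inequalities. That construction, not a blanket ``large enough'' argument, is what is needed to close the proof.
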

\begin{proof} Set $J=\{(l,b)| l \in \mZ_{>0}, b \in (B^{6,l})_\text{min}\}$. By (\cite{KKM}, Definition 4.1), we need to show that
\begin{enumerate}
\item wt$(b^{\infty})=0, \veps(b^{\infty})=\vphi(b^{\infty})=0,$
\item for any $(l,b) \in J$, there exists an embedding of crystals
$$f_{(l,b)} : T_{\veps(b)} \otimes B^{6, l} \otimes T_{-\vphi(b)} \longrightarrow B^{6, \infty}$$
where $f_{(l,b)}(t_{\veps(b)} \otimes b \otimes t_{-\vphi(b)}) = b^{\infty}$,
\item $B^{6, \infty} = \cup_{(l,b) \in J}$ Im $f_{(l,b)}$.
\end{enumerate} 

Since $\veps_k(b^{\infty}) = 0, \vphi_k(b^{\infty}) = 0, 0 \leq k \leq 6$, we have $\veps(b^{\infty}) = 0, \vphi(b^{\infty}) = 0$ and hence wt$(b^{\infty}) = 0$ which proves $(1)$.

Let $l \in \mZ_{>0}$ and $b^0 = (b^0_{ij})$ be an element of $(B^{6,l})_{\text{min}}$. Then there exist $a_k \in \mZ_{\geq 0}, 0 \leq k \leq 6$ such that $a_0+a_1+2a_2+2a_3+2a_4+a_5+a_6=l$ and 
\begin{align*}
b^0_{11}    &= a_1+a_2+a_3+a_4+a_6,  \ b^0_{12} =a_5, \ b^0_{13}  = a_4, \ b^0_{14} =a_3, \ b^0_{15}=a_2, \ b^0_{16}    =a_0,  \\ 
b^0_{22}  &= a_2+a_3+a_4+a_6, \ b^0_{23} =a_5, \ b^0_{24} = a_4, \ b^0_{25} = a_3, \ b^0_{26} = a_1+a_2, \ b^0_{27}     = a_0, \\ 
 b^0_{33} 	&= a_3+a_4+a_6,\ b^0_{34} =a_5, \ b^0_{35}=a_4, \ b^0_{36} =a_2+a_3, \ b^0_{37} = a_1,\ b^0_{38}    =a_0+a_2, \\
 b^0_{44} &= a_4+a_6, \ b^0_{45} = a_5, \ b^0_{46} = a_3+a_4,\ b^0_{47} =a_2, \ b^0_{48} =a_1, \ b^0_{49}    = a_0+a_2+a_3, \\
 b^0_{55} &=a_6, \ b^0_{56} = a_4+a_5, \ b^0_{57} = a_3, \ b^0_{58} = a_2, \ b^0_{59} = a_1, \ b^0_{5,10} =a_0+a_2+a_3+a_4, \\
 b^0_{66} &=a_6, \ b^0_{67} = a_4, \ b^0_{68} = a_3, \ b^0_{69} = a_2, \ b^0_{6,10} = a_1, \ b^0_{6,11} =a_0+a_2+a_3+a_4+a_5, \\
\veps(b^0)& = a_6\Lambda_0+a_5\Lambda_1+a_4\Lambda_2+a_3\Lambda_3+a_2\Lambda_4+a_1\Lambda_5+a_0\Lambda_6,\\
\vphi(b^0)&= a_0\Lambda_0+a_1\Lambda_1+a_2\Lambda_2+a_3\Lambda_3+a_4\Lambda_4+a_5\Lambda_5+a_6\Lambda_6. 
\end{align*}

For any $b= (b_{ij}) \in B^{6,l}$, we define a map
$$f_{(l,b^0)} : T_{\veps(b^0)} \otimes B^{6,l} \otimes T_{-\vphi(b^0)} \longrightarrow B^{6,\infty}$$
by $f_{(l,b^0)}( t_{\veps(b^0)} \otimes b \otimes t_{-\vphi(b^0)} )= b' =(b'_{ij}) $
where $b'_{ij} = b_{ij} - b^0_{ij}$ for all $i \leq j \leq i+5,\ 1 \leq i \leq 6$.
Then it is easy to see that
\begin{align*}
\veps_k(b') &= 
\veps_k(b) - a_{6-k} = \veps_k(b) - \langle \check\alpha_k, \veps(b^0)\rangle \ \ \text{for} \; 0 \leq k  \leq6,\\
\vphi_k(b') &= \vphi_k(b) - a_k = \vphi_k(b) + \langle \check\alpha_k, -\vphi(b^0)\rangle \ \ \text{for} \; 0 \leq k  \leq6.
\end{align*}
Hence we have
\begin{align*}
\veps_k(b') &= \veps_k(b) - \langle \check\alpha_k, \veps(b^0)\rangle = \veps_k(t_{\veps(b^0)}\otimes b\otimes t_{-\vphi(b^0)}), \\
\vphi_k(b') &= \vphi_k(b) + \langle \check\alpha_k, -\vphi(b^0)\rangle=\vphi_k(t_{\veps(b^0)}\otimes b\otimes t_{-\vphi(b^0)}),\\
\text{wt}(b') &= \sum_{k=0}^6(\vphi_k(b') - \veps_k(b'))\L_k = \text{wt}(b) +  \sum_{k=0}^6 \langle \check\alpha_k, -\vphi(b^0)\rangle\L_k + \sum_{k=0}^6 \langle \check\alpha_k, \veps(b^0)\rangle\L_k \\
&= \text{wt}(b) - \vphi(b^0) + \veps(b^0) = \text{wt}(t_{\veps(b^0)}\otimes b\otimes t_{-\vphi(b^0)}).
\end{align*}

For $0 \leq k \leq 6, b \in B^{6,l}$, it can be checked easily that the conditions for the action of $\e_k$ on $b' = b - b^0$ hold if and only if the conditions for the action of $\e_k$ on $b$ hold. Hence from the defined action of $\e_k$, we see that $\e_k(b') = \e_k(b) - b^0, 0\leq k \leq6$. This implies that 
\[
f_{(l,b^0)} (\tilde{e_k}( t_{\veps(b^0)} \otimes b \otimes t_{-\vphi(b^0)})) = f_{(l,b^0)} ( t_{\veps(b^0)} \otimes \e_k(b) \otimes t_{-\vphi(b^0)}) \]
\[= \e_k(b) - b^0 = \e_k(b') = \e_k ( f_{(l, b^0)}( t_{\veps(b^0)} \otimes b \otimes t_{-\vphi(b^0)})).
\]
Similarly, we have  $f_{(l,b^0)} (\tilde{f_k}( t_{\veps(b^0)} \otimes b \otimes t_{-\vphi(b^0)})) =  \f_k ( f_{(l, b^0)}( t_{\veps(b^0)} \otimes b \otimes t_{-\vphi(b^0)}))$. Clearly the map $f_{(l,b^0)}$ is injective with $f_{(l,b^0)} ( t_{\veps(b^0)}  \otimes b^0 \otimes t_{-\vphi(b^0)})=b^{\infty}$. This proves (2).

We observe that $ \sum_{j=i}^{i+5} b'_{ij} =  \sum_{j=i}^{i+5} b_{ij} -  \sum_{j=i}^{i+5} b^0_{ij} = l-l = 0$ for all $1 \leq i \leq 6$. Also,
\begin{align*}
b'_{11} &= b_{11} - b^0_{11} \\
	&= b_{66} + b_{67} + b_{68} + b_{69} + b_{6,10} - a_1 - a_2 - a_3 -a_4 - a_6\\
	&= b'_{66} + b'_{67} + b'_{68} + b'_{69} + b'_{6,10},\\
b'_{11} + b'_{12} &= b_{11} - b^0_{11} + b_{12} - b^0_{12} \\
	&= b_{55} + b_{56} + b_{57} + b_{58} + b_{59} - a_1 - a_2 - a_3 - a_4 - a_5 - a_6 \\
	&= b'_{55} + b'_{56} + b'_{57} + b'_{58} + b'_{59},\\
b'_{11} + b'_{12} + b'_{13} &= b_{11} - b^0_{11} + b_{12} - b^0_{12} + b_{13}  - b^0_{13} \\
	&= b_{44} + b_{45} + b_{46} + b_{47} + b_{48} - a_1 - a_2 - a_3 - 2a_4 - a_5 - a_6 \\
	&= b'_{44} + b'_{45} + b'_{46} + b'_{47} + b'_{48},\\
b'_{11} + b'_{12} + b'_{13} + b'_{14} &= b_{11} - b^0_{11}  + b_{12} - b^0_{12} + b_{13} - b^0_{13} + b_{14} - b^0_{14} \\
	&= b_{33} + b_{34} + b_{35} + b_{36} + b_{37} - a_1 - a_2 - 2a_3 - 2a_4 - a_5 - a_6 \\
	&= b'_{33} + b'_{34} + b'_{35} + b'_{36} + b'_{37}, \\
b'_{11} + b'_{12} + b'_{13} + b'_{14}+b'_{15} &= b_{11} - b^0_{11}  + b_{12} - b^0_{12} + b_{13} - b^0_{13} + b_{14} - b^0_{14} + b_{15} - b^0_{15} \\
	&= b_{22} + b_{23} + b_{24} + b_{25} + b_{26} - a_1 - 2a_2 - 2a_3 - 2a_4 - a_5 - a_6 \\
	&= b'_{22} + b'_{23} + b'_{24} + b'_{25} + b'_{26}.
\end{align*}
Similarly, we can show that $\sum_{j=i}^{6-t} b'_{ij} = \sum_{j=i+t}^{5+t} b'_{i+t,j},$\ for $2 \leq i \leq 5,\ 1 \leq t \leq 5$. Hence we have $B^{6,\infty} \supseteq \cup_{(l,b) \in J}$ Im $f_{(l,b)}$. To prove (3) we also need to show that $B^{6,\infty} \subseteq \cup_{(l,b) \in J}$ Im $f_{(l,b)}$. Let $b' = (b'_{ij}) \in B^{6,\infty}$. By (2), we can assume that $b' \neq b^{\infty}$. Set 
\begin{align*}
a_1 	&=   \text{max} \{- b'_{11} + b'_{22} ,- b'_{11}- b'_{12}+b'_{22}+b'_{23},  - b'_{11}- b'_{12}- b'_{13}+b'_{22}+b'_{23}+b'_{24}, \\
	&\hspace{40pt} - b'_{11}- b'_{12}- b'_{13} - b'_{14}+b'_{22}+b'_{23}+b'_{24}+b'_{25}, 0\},\\
a_2 	&= \text{max} \{- b'_{22}+b'_{33} ,  - b'_{22} - b'_{23}+b'_{33}+b'_{34}, - b'_{22} - b'_{23} - b'_{24}+b'_{33}+b'_{34}+b'_{35}, \\
	&\hspace{40pt} - b'_{15}, -b'_{26} - a_1, 0\}, \\
a_3 	&= \text{max} \{- b'_{33}+b'_{44} , - b'_{33} - b'_{34}+b'_{44}+b'_{45},- b'_{14}, - b'_{25}, -b'_{36} - a_2, 0\}, \\
a_4 	&= \text{max} \{ -b'_{44}+b'_{55}, - b'_{13}, - b'_{24}, - b'_{35}, -b'_{46} - a_3, 0\},\\
a_5 	&= \text{max} \{ -b'_{12}, -b'_{23}, -b'_{34}, -b'_{45}, -b'_{56} - a_4, 0 \}, \\
a_6 	&= \text{max} \{ -b'_{11} - a_1 - a_2 - a_3-a_4, -b'_{22} - a_2 - a_3 - a_4, -b'_{33} - a_3 - a_4, -b'_{44} \\		&\hspace{40pt} - a_4, -b'_{55}, 0 \}, \\
a_0 	&= \text{max} \{ b'_{11} - a_2 - a_3 - a_4-a_5, b'_{11}+b'_{12} - a_2 - a_3 -a_4, b'_{11}+b'_{12}+b'_{13} - a_2  \\
	&\hspace{40pt} -a_3,b'_{11}+b'_{12}+b'_{13}+b'_{14} - a_2, b'_{11}+b'_{12}+b'_{13}+b'_{14}+b'_{15}, 0 \}.
\end{align*}
Let $l=a_0+a_1+2a_2+2a_3+2a_4+a_5+a_6$. Let $b^0 = (b^0_{ij})$ where 
\begin{align*}
b^0_{11} &= a_1+a_2+a_3+a_4+a_6,  \ b^0_{12} =a_5, \ b^0_{13}  = a_4, \ b^0_{14} =a_3, \ b^0_{15}=a_2, \ b^0_{16} =a_0, \\
b^0_{22} &= a_2+a_3+a_4+a_6, \ b^0_{23} =a_5, \ b^0_{24} = a_4, a^0_{25} = a_3, \ b^0_{26} = a_1+a_2, \ b^0_{27} = a_0, \\
b^0_{33} &= a_3+a_4+a_6, \ b^0_{34} =a_5, \ b^0_{35}=a_4, \ b^0_{36} =a_2+a_3, \ b^0_{37} =a_1, \ b^0_{38} = a_0+a_2, \\
b^0_{44} &= a_4+a_6, \ b^0_{45} = a_5, \ b^0_{46} = a_3+a_4,\ b^0_{47} =a_2,  \ b^0_{48} =a_1, \ b^0_{49} =a_0+a_2+a_3, \\
b^0_{55} &= a_6, \ b^0_{56} = a_4+a_5, \ b^0_{57} = a_3,\ b^0_{58} =a_2, \ b^0_{5,9} =a_1, \ b^0_{5,10} =a_0+a_2+a_3+a_4, \\
b^0_{66} &=a_6, \ b^0_{67} = a_4, \ b^0_{68} = a_3, \ b^0_{69} = a_2, \ b^0_{6,10} = a_1, \ b^0_{6,11} = =a_0+a_2+a_3+a_4+a_5.
\end{align*}
Then	$\veps(b^0) 	= a_6\Lambda_0+a_5\Lambda_1+a_4\Lambda_2+a_3\Lambda_3+a_2\Lambda_4+a_1\Lambda_5+a_0\Lambda_6$
and 	$\vphi(b^0)	= a_0\Lambda_0+a_1\Lambda_1+a_2\Lambda_2+a_3\Lambda_3+a_4\Lambda_4+a_5\Lambda_5+a_6\Lambda_6$.
It is easy to see that $b^0 \in (B^{6,l})_{\text{min}}$. 

Set $b = (b_{ij})$ where $b_{ij} = b'_{ij} +b^0_{ij}$. Then $\sum_{j=i}^{i+5} b_{ij} = \sum_{j=i}^{i+5} b'_{ij} + \sum_{j=i}^{i+5} b^0_{ij} = 0 + l = l,\ 1 \leq i \leq 6$ and we observe that 
\begin{align*}
b_{11} &= b'_{11} + b^0_{11} = b'_{11} + a_1+a_2+a_3+a_4+a_6 \geq 0, \\
		& \hspace{4.5cm} \text{since} \; a_6 \geq -b'_{11} - a_1 - a_2 - a_3-a_4,\\ 
b_{12} &= b'_{12} + b^0_{12} = b'_{12} + a_5 \geq 0, \ \text{since} \; a_5 \geq - b'_{12},\\ 
b_{13} &= b'_{13} + b^0_{13} = b'_{13} + a_4 \geq 0, \ \text{since} \; a_4 \geq - b'_{13},\\ 
b_{14} &= b'_{14} + b^0_{14} = b'_{14} + a_3 \geq 0, \ \text{since} \; a_3 \geq - b'_{14},\\ 
b_{15} &= b'_{15} + b^0_{15} = b'_{14} + a_2 \geq 0, \ \text{since} \; a_2 \geq - b'_{15},\\ 
b_{16} &= b'_{16} + b^0_{16} = b'_{15} + a_0 = - b'_{11} - b'_{12} - b'_{13} - b'_{14} - b'_{15} + a_0 \geq 0, \\
	& \hspace{4.5cm} \text{since} \; a_0 \geq b'_{11}+b'_{12}+b'_{13}+b'_{14} +b'_{15}.
\end{align*}
Similarly, we can show that $b_{ij} \in \mZ_{\geq 0}$ for $i \leq j \leq i+5,\ 2 \leq i \leq 6$. We also have
\begin{align*}
b_{44} &= b'_{44} + b^0_{44} = b'_{66} + b'_{67} + a_4  + a_6 = b_{66} + b_{67},\\
b_{44} + b_{45} &= b'_{44} + b^0_{44} + b'_{45} + b^0_{45} = b'_{55} + b'_{56} +  a_4 + a_5 + a_6 = b_{55} + b_{56},\\
b_{55} &= b'_{55} + b^0_{55} = b'_{66} + a_6 = b_{66}.
\end{align*}
Similarly, we see that $\sum_{j=i}^{6-t} b_{ij} = \sum_{j=i+t}^{5+t} b_{i+t,j},\ 1 \leq i \leq 3, 1 \leq t \leq 5$. \ Also,
\begin{align*}
b_{11} &= b'_{11} + b^0_{11} \geq b'_{22} + b^0_{22} = b_{22},\ \text{since} \; b^0_{11} - b^0_{22} = a_1 \geq - b'_{11} + b'_{22},\\ 
b_{22} &= b'_{22} + b^0_{22} \geq b'_{33} + b^0_{33} = b_{33},\ \text{since} \; b^0_{22} - b^0_{33} = a_2 \geq - b'_{22} + b'_{33},\\
b_{33} &= b'_{33} + b^0_{33} \geq b'_{44} + b^0_{44} = b_{44},\ \text{since} \; b^0_{33} - b^0_{44} = a_3 \geq - b'_{33} + b'_{44}, \\
b_{44} &= b'_{44} + b^0_{44} \geq b'_{55} + b^0_{55} = b_{55},\ \text{since} \; b^0_{44} - b^0_{55} = a_4 \geq - b'_{44} + b'_{55} .
\end{align*}
Similarly, $\sum_{j=i}^{t} b_{ij} \geq \sum_{j=i+1}^{t+1} b_{i+1,j}, 1 \leq i < t \leq 5$. Hence $b \in B^{6,l}$.

Then $f_{(l,b^0)}(t_{\veps(b^0)} \otimes b \otimes t_{-\vphi(b^0)}) = b',$ and 
$b' \in \cup_{(l,b) \in J}$ Im $f_{(l,b)}$ which proves (3).
\end{proof}

\section{Ultra-discretization of \bf{$\mathcal{V}(D_6^{(1)})$}} 
It is known that the ultra-discretization of a positive geometric crystal is a Kashiwara crystal \cite{BK, N}. In this section we apply the ultra-discretization functor $\mathcal{UD}$ to the positive geometric crystal $\mathcal{V}=\mathcal{V}(D_6^{(1)})$ for the affine Lie algebra $D_6^{(1)}$ at the spin node $k=6$ in  (\cite{MP},Theorem 5.1). Then we show that as crystal it is isomorphic to the crystal $B^{6, \infty}$ given in the last section which proves the conjecture in \cite{KNO} for this case.
As a set $\mathcal{X}=\mathcal{UD}(\mathcal{V}) = \mZ^{15}$. We denote the variables $x_m^{(l)}$ in $\cV$ by the same notation $x_m^{(l)}$ in 
$\mathcal{UD}(\mathcal{V}) = \mathcal{X}$.

Let $x=(x_6^{(3)},x_4^{(4)}, x_3^{(3)}, x_2^{(2)}, x_5^{(2)}, x_4^{(3)}, x_3^{(2)}, x_6^{(2)}, x_4^{(2)}, x_5^{(1)},x_1^{(1)}, x_2^{(1)}, x_3^{(1)}, x_4^{(1)}, x_6^{(1)}) \\ \in \mathcal{X}$. By applying the ultra-discretization functor $\mathcal{UD}$ to the positive geometric crystal $\cV$ in (\cite{MP},Theorem 5.1), we have for $0 \leq k \leq 6$:
\begin{align*}
\mathcal{UD}(\gamma_k)(x) &=
 \small \begin{cases}
-x_2^{(2)}-x_2^{(1)}, &k=0,\\
2x_1^{(1)}-x_2^{(2)}-x_2^{(1)}, &k=1,\\
-x_1^{(1)}+2x_2^{(2)}+2x_2^{(1)} -x_3^{(3)}-x_3^{(2)}-x_3^{(1)}, &k=2,\\
-x_2^{(2)}-x_2^{(1)}+2x_3^{(3)}+2x_3^{(2)}+2x_3^{(1)}-x_4^{(4)}-x_4^{(3)}&\\
	\hspace{15pt }-x_4^{(2)}-x_4^{(1)}, &k=3,\\
-x_3^{(3)}-x_3^{(2)}-x_3^{(1)}+2x_4^{(4)}+2x_4^{(3)}+2x_4^{(2)}+2x_4^{(1)} &\\
	\hspace{15pt}  -x_5^{(2)}-x_5^{(1)}-x_6^{(3)}-x_6^{(2)}-x_6^{(1)}, &k=4,\\
-x_4^{(4)}-x_4^{(3)}-x_4^{(2)}-x_4^{(1)}+2x_5^{(2)}+2x_5^{(1)}, &k=5, \\
-x_4^{(4)}-x_4^{(3)}-x_4^{(2)}-x_4^{(1)}+2x_6^{(3)}+2x_6^{(2)}+2x_6^{(1)}, &k=6.
\end{cases} \\
\mathcal{UD}(\veps_k)(x) &= 
\small \begin{cases}
\text{max} \{x_6^{(1)}, x_2^{(2)}+x_2^{(1)}-x_3^{(3)}-x_3^{(2)}+x_5^{(1)}, x_2^{(2)}-x_3^{(3)} &\\
 	\hspace{15pt} +x_3^{(1)}-x_4^{(2)}+x_5^{(1)},x_2^{(2)}-x_3^{(3)}+x_4^{(1)}, x_3^{(3)}+x_3^{(2)}&\\
	\hspace{15pt} -x_4^{(3)}-x_4^{(2)}+x_5^{(1)}, x_3^{(2)}-x_4^{(3)}+x_4^{(1)},x_4^{(2)}+x_4^{(1)} &\\
	 \hspace{15pt} -x_6^{(2)}, x_2^{(2)}+x_2^{(1)}-x_6^{(3)} , x_2^{(2)}+x_2^{(1)}+x_6^{(2)}-x_4^{(4)} &\\
	  \hspace{15pt} -x_4^{(3)},x_2^{(2)}+x_2^{(1)}-x_3^{(2)}-x_4^{(4)}+x_4^{(2)},x_2^{(2)}+x_2^{(1)}&\\
	  \hspace{15pt} -x_3^{(3)}-x_3^{(2)}+x_4^{(3)}+x_4^{(2)}-x_5^{(2)},x_2^{(2)}+x_3^{(1)}-x_4^{(4)},&\\
	  \hspace{15pt} x_2^{(2)}-x_3^{(3)}+x_3^{(1)}+x_4^{(3)}-x_5^{(2)}, x_3^{(2)}+x_3^{(1)}-x_5^{(2)}\}, &k=0,\\
-x_1^{(1)}+x_2^{(2)}, &k=1,\\
\text{max} \{-x_2^{(2)}+x_3^{(3)}, x_1^{(1)}-2x_2^{(2)}-x_2^{(1)}+x_3^{(3)}+x_3^{(2)}\}, &k=2,\\
\text{max} \{-x_3^{(3)}+x_4^{(4)}, x_2^{(2)}-2x_3^{(3)}-x_3^{(2)}+x_4^{(4)}+x_4^{(3)}, &\\
	\hspace{15pt}x_2^{(2)}+x_2^{(1)}-2x_3^{(3)}-2x_3^{(2)}-x_3^{(1)}+x_4^{(4)}+x_4^{(3)}+x_4^{(2)}\}, &k=3,\\
	\end{cases}
	\\
\mathcal{UD}(\veps_k)(x) &= : 
	\small \begin{cases}
\text{max} \{-x_4^{(4)}+x_6^{(3)}, x_3^{(3)}-2x_4^{(4)}-x_4^{(3)}+x_5^{(2)}+x_6^{(3)}, &\\
	\hspace{15pt} x_3^{(3)}+x_3^{(2)}-2x_4^{(4)}-2x_4^{(3)}-x_4^{(2)}+x_5^{(2)}+x_6^{(3)}+x_6^{(2)},&\\
	 \hspace{15pt}  x_3^{(3)}+x_3^{(2)}+x_3^{(1)} -2x_4^{(4)}-2x_4^{(3)}-2x_4^{(2)}-x_4^{(1)}+x_5^{(2)}&\\
	 \hspace{15pt} +x_5^{(1)}+x_6^{(3)}+x_6^{(2)} \}, &k=4,\\
\text{max} \{x_4^{(4)}-x_5^{(2)}, x_4^{(4)}+x_4^{(3)}+x_4^{(2)}-2x_5^{(2)}-x_5^{(1)}\}, &k=5, \\
\text{max} \{-x_6^{(3)}, x_4^{(4)}+x_4^{(3)}-2x_6^{(3)}-x_6^{(2)}, x_4^{(4)}+x_4^{(3)}+x_4^{(2)} &\\
	\hspace{15pt} +x_4^{(1)}-2x_6^{(3)}-2x_6^{(2)}-x_6^{(1)}\}, &k=6.
\end{cases}
\end{align*}
We define
\begin{align*}
\breve{c_2} &=\text{max}\{c+x_2^{(2)}+x_2^{(1)}, x_3^{(2)}+x_1^{(1)}\}-\text{max}\{x_2^{(2)}+x_2^{(1)}, x_3^{(2)}+x_1^{(1)}\},\\
\breve{c_{3_1}} &=\text{max}\{c+x_3^{(3)}+2x_3^{(2)}+x_3^{(1)}, x_2^{(2)}+x_3^{(2)}+x_3^{(1)}+x_4^{(3)},x_2^{(2)}+x_2^{(1)}+x_4^{(3)}+x_4^{(2)}\}\\
			& \hspace{15pt} - \text{max}\{x_3^{(3)}+2x_3^{(2)}+x_3^{(1)}, x_2^{(2)}+x_3^{(2)}+x_3^{(1)}+x_4^{(3)},x_2^{(2)}+x_2^{(1)}+x_4^{(3)}+x_4^{(2)}\},\\
\breve{c_{3_2}} &=\text{max}\{c+x_3^{(3)}+2x_3^{(2)}+x_3^{(1)}, c+x_2^{(2)}+x_3^{(2)}+x_3^{(1)}+x_4^{(3)},x_2^{(2)}+x_2^{(1)}+x_4^{(3)}\\
			& \hspace{15pt} +x_4^{(2)}\}-\text{max}\{c+x_3^{(3)}+2x_3^{(2)}+x_3^{(1)}, x_2^{(2)}+x_3^{(2)}+x_3^{(1)}+x_4^{(3)},x_2^{(2)}+x_2^{(1)}\\
			& \hspace{15pt} +x_4^{(3)}+x_4^{(2)}\},\\
\breve{c_{4_1}} &=\text{max}\{c+x_4^{(4)}+2x_4^{(3)}+2x_4^{(2)}+x_4^{(1)}, x_3^{(3)}+x_4^{(3)}+2x_4^{(2)}+x_4^{(1)}+x_5^{(2)}, x_3^{(3)} \\
			& \hspace{15pt} +x_3^{(2)}+x_4^{(2)}+x_4^{(1)}+x_5^{(2)}+x_6^{(2)}, x_3^{(3)}+x_3^{(2)}+x_3^{(1)}+x_5^{(2)}+x_5^{(1)}+x_6^{(2)}\}\\
			& \hspace{15pt} - \text{max}\{x_4^{(4)}+2x_4^{(3)}+2x_4^{(2)}+x_4^{(1)}, x_3^{(3)}+x_4^{(3)}+2x_4^{(2)}+x_4^{(1)}+x_5^{(2)}, x_3^{(3)}+x_3^{(2)} \\
			& \hspace{15pt}+x_4^{(2)}+x_4^{(1)}+x_5^{(2)}+x_6^{(2)}, x_3^{(3)}+x_3^{(2)}+x_3^{(1)}+x_5^{(2)}+x_5^{(1)}+x_6^{(2)}\}\\
\breve{c_{4_2}} &=\text{max}\{c+x_4^{(4)}+2x_4^{(3)}+2x_4^{(2)}+x_4^{(1)}, c+x_3^{(3)}+x_4^{(3)}+2x_4^{(2)}+x_4^{(1)}+x_5^{(2)}, x_3^{(3)}+ \\
			& \hspace{15pt} x_3^{(2)}+x_4^{(2)}+x_4^{(1)}+x_5^{(2)}+x_6^{(2)}, x_3^{(3)}+x_3^{(2)}+x_3^{(1)}+x_5^{(2)}+x_5^{(1)}+x_6^{(2)}\}-\\
			& \hspace{15pt} \text{max}\{c+x_4^{(4)}+2x_4^{(3)}+2x_4^{(2)}+x_4^{(1)}, x_3^{(3)}+x_4^{(3)}+2x_4^{(2)}+x_4^{(1)}+x_5^{(2)}, \\
			& \hspace{15pt} x_3^{(3)}+x_3^{(2)} +x_4^{(2)}+x_4^{(1)}+x_5^{(2)}+x_6^{(2)}, x_3^{(3)}+x_3^{(2)}+x_3^{(1)}+x_5^{(2)}+x_5^{(1)}+x_6^{(2)}\}\\
\breve{c_{4_3}} &=\text{max}\{c+x_4^{(4)}+2x_4^{(3)}+2x_4^{(2)}+x_4^{(1)}, c+x_3^{(3)}+x_4^{(3)}+2x_4^{(2)}+x_4^{(1)}+x_5^{(2)},  \\
			& \hspace{15pt} c+x_3^{(3)}+x_3^{(2)}+x_4^{(2)}+x_4^{(1)}+x_5^{(2)}+x_6^{(2)}, x_3^{(3)}+x_3^{(2)}+x_3^{(1)}+x_5^{(2)}+x_5^{(1)}\\
			& \hspace{15pt} +x_6^{(2)}\}-\text{max}\{c+x_4^{(4)}+2x_4^{(3)}+2x_4^{(2)}+x_4^{(1)}, c+x_3^{(3)}+x_4^{(3)}+2x_4^{(2)}+x_4^{(1)} \\
			& \hspace{15pt}  +x_5^{(2)},x_3^{(3)}+x_3^{(2)}+x_4^{(2)}+x_4^{(1)}+x_5^{(2)}+x_6^{(2)}, x_3^{(3)}+x_3^{(2)}+x_3^{(1)}+x_5^{(2)}+x_5^{(1)}\\
			& \hspace{15pt} +x_6^{(2)}\}\\
\breve{c_5} &=\text{max}\{c+x_5^{(2)}+x_5^{(1)}, x_4^{(3)}+x_4^{(2)}\}-\text{max}\{x_5^{(2)}+x_5^{(1)}, x_4^{(3)}+x_4^{(2)}\},\\
\breve{c_{6_1}} &=\text{max}\{c+x_6^{(3)}+2x_6^{(2)}+x_6^{(1)}, x_4^{(4)}+x_4^{(3)}+x_6^{(2)}+x_6^{(1)},x_4^{(4)}+x_4^{(3)}+x_4^{(2)}+x_4^{(1)}\}\\
			& \hspace{15pt} - \text{max}\{x_6^{(3)}+2x_6^{(2)}+x_6^{(1)}, x_4^{(4)}+x_4^{(3)}+x_6^{(2)}+x_6^{(1)},x_4^{(4)}+x_4^{(3)}+x_4^{(2)}+x_4^{(1)}\},\\
\breve{c_{6_2}} &=\text{max}\{c+x_6^{(3)}+2x_6^{(2)}+x_6^{(1)}, c+x_4^{(4)}+x_4^{(3)}+x_6^{(2)}+x_6^{(1)},x_4^{(4)}+x_4^{(3)}+x_4^{(2)}\\
			& \hspace{15pt} +x_4^{(1)}\} - \text{max}\{c+x_6^{(3)}+2x_6^{(2)}+x_6^{(1)}, x_4^{(4)}+x_4^{(3)}+x_6^{(2)}+x_6^{(1)},x_4^{(4)}+x_4^{(3)}\\
			& \hspace{15pt} +x_4^{(2)}+x_4^{(1)}\},		\\
\breve{K}		& = \text{max} \{x_6^{(1)}, x_2^{(2)}+x_2^{(1)}-x_3^{(3)}-x_3^{(2)}+x_5^{(1)}, x_2^{(2)}-x_3^{(3)}+x_3^{(1)}-x_4^{(2)}+x_5^{(1)},\\
 			&\hspace{15pt} x_2^{(2)}-x_3^{(3)}+x_4^{(1)}, x_3^{(3)}+x_3^{(2)}-x_4^{(3)}-x_4^{(2)}+x_5^{(1)}, x_3^{(2)}-x_4^{(3)}+x_4^{(1)},\\
			&\hspace{15pt} x_4^{(2)}+x_4^{(1)}-x_6^{(2)}, x_2^{(2)}+x_2^{(1)}-x_6^{(3)} , x_2^{(2)}+x_2^{(1)}+x_6^{(2)}-x_4^{(4)}-x_4^{(3)}, \\
			 &\hspace{15pt} x_2^{(2)}+x_2^{(1)}-x_3^{(2)}-x_4^{(4)}+x_4^{(2)},x_2^{(2)}+x_2^{(1)}-x_3^{(3)}-x_3^{(2)}+x_4^{(3)}+x_4^{(2)}-x_5^{(2)},\\
	 		 &\hspace{15pt} x_2^{(2)}+x_3^{(1)}-x_4^{(4)}, x_2^{(2)}-x_3^{(3)}+x_3^{(1)}+x_4^{(3)}-x_5^{(2)}, x_3^{(2)}+x_3^{(1)}-x_5^{(2)}\}, 
\end{align*}
Then we have
\begin{align*}
\mathcal{UD}(e_k^{c})(x) &=
\begin{cases}
(x_6^{(3)'},x_4^{(4)'}, x_3^{(3)'}, x_2^{(2)}-c, x_5^{(2)'}, x_4^{(3)'}, x_3^{(2)'}, x_6^{(2)'}, x_4^{(2)'}, x_5^{(1)'},&\\
	\hspace{15pt} x_1^{(1)}-c, x_2^{(1)}-c, x_3^{(1)'}, x_4^{(1)'}, x_6^{(1)'}), &k=0,\\
(x_6^{(3)},x_4^{(4)}, x_3^{(3)}, x_2^{(2)}, x_5^{(2)}, x_4^{(3)}, x_3^{(2)}, x_6^{(2)}, x_4^{(2)}, x_5^{(1)}, x_1^{(1)}+c,&\\
	\hspace{15pt}  x_2^{(1)}, x_3^{(1)},x_4^{(1)}, x_6^{(1)}), &k=1,\\
(x_6^{(3)},x_4^{(4)}, x_3^{(3)}, x_2^{(2)}+\breve{c_2}, x_5^{(2)}, x_4^{(3)}, x_3^{(2)}, x_6^{(2)}, x_4^{(2)}, x_5^{(1)},&\\
 	\hspace{15pt} x_1^{(1)}, x_2^{(1)}+c-\breve{c_2}, x_3^{(1)}, x_4^{(1)}, x_6^{(1)}), &k=2,\\
(x_6^{(3)},x_4^{(4)}, x_3^{(3)}+\breve{c_{3_1}}, x_2^{(2)}, x_5^{(2)}, x_4^{(3)}, x_3^{(2)}+\breve{c_{3_2}}, x_6^{(2)},x_4^{(2)},   &\\
	\hspace{15pt} x_5^{(1)},x_1^{(1)}, x_2^{(1)}, x_3^{(1)}+c-\breve{c_{3_1}}-\breve{c_{3_2}}, x_4^{(1)}, x_6^{(1)}), &k=3,\\ 
(x_6^{(3)},x_4^{(4)}+\breve{c_{4_1}}, x_3^{(3)}, x_2^{(2)}, x_5^{(2)}, x_4^{(3)}+\breve{c_{4_2}}, x_3^{(2)}, x_6^{(2)}, x_4^{(2)}&\\ 
	\hspace{15pt} +\breve{c_{4_3}},x_5^{(1)},x_1^{(1)}, x_2^{(1)}, x_3^{(1)}, x_4^{(1)}+c-\breve{c_{4_1}}-\breve{c_{4_2}}-\breve{c_{4_3}}, x_6^{(1)}), &k=4,\\ 
(x_6^{(3)},x_4^{(4)}, x_3^{(3)}, x_2^{(2)}, x_5^{(2)}+\breve{c_5}, x_4^{(3)}, x_3^{(2)}, x_6^{(2)}, x_4^{(2)},  x_5^{(1)}&\\
	\hspace{15pt} +c-\breve{c_5},x_1^{(1)}, x_2^{(1)},x_3^{(1)}, x_4^{(1)}, x_6^{(1)}), &k=5,\\ 
(x_6^{(3)}+\breve{c_{6_1}},x_4^{(4)}, x_3^{(3)}, x_2^{(2)}, x_5^{(2)}, x_4^{(3)}, x_3^{(2)}, x_6^{(2)}+\breve{c_{6_2}}, x_4^{(2)}, &\\
	\hspace{15pt} x_5^{(1)},x_1^{(1)}, x_2^{(1)}, x_3^{(1)}, x_4^{(1)}, x_6^{(1)}+c-\breve{c_{6_1}}-\breve{c_{6_2}}), &k=6,\\ 
\end{cases} \\
\text{where} &\\
x_3^{(1)'} &=x_3^{(1)} + \breve{K} - \text{max} \{c+x_6^{(1)}, x_2^{(2)}+x_2^{(1)}-x_3^{(3)}-x_3^{(2)}+x_5^{(1)}, c+x_2^{(2)}\\
 			&\hspace{15pt} -x_3^{(3)}+x_3^{(1)}-x_4^{(2)}+x_5^{(1)},c+x_2^{(2)}-x_3^{(3)}+x_4^{(1)}, c+x_3^{(3)}+x_3^{(2)}-x_4^{(3)}\\
			&\hspace{15pt} -x_4^{(2)}+x_5^{(1)}, c+x_3^{(2)}-x_4^{(3)}+x_4^{(1)}, c+x_4^{(2)}+x_4^{(1)}-x_6^{(2)}, x_2^{(2)}+x_2^{(1)}\\
			&\hspace{15pt} -x_6^{(3)} , x_2^{(2)}+x_2^{(1)}+x_6^{(2)}-x_4^{(4)}-x_4^{(3)}, x_2^{(2)}+x_2^{(1)}-x_3^{(2)}-x_4^{(4)}+x_4^{(2)},\\
			&\hspace{15pt} x_2^{(2)}+x_2^{(1)}-x_3^{(3)}-x_3^{(2)}+x_4^{(3)}+x_4^{(2)}-x_5^{(2)},c+x_2^{(2)}+x_3^{(1)}-x_4^{(4)}, c\\
			&\hspace{15pt} +x_2^{(2)} -x_3^{(3)}+x_3^{(1)}+x_4^{(3)}-x_5^{(2)}, c+x_3^{(2)}+x_3^{(1)}-x_5^{(2)}\}, \\
x_3^{(2)'} &=-c + x_3^{(2)}  + \text{max} \{c+x_6^{(1)}, x_2^{(2)}+x_2^{(1)}-x_3^{(3)}-x_3^{(2)}+x_5^{(1)}, c+x_2^{(2)}\\
			&\hspace{15pt} -x_3^{(3)}+x_3^{(1)}-x_4^{(2)}+x_5^{(1)},c+x_2^{(2)}-x_3^{(3)}+x_4^{(1)}, c+x_3^{(3)}+x_3^{(2)}-x_4^{(3)}\\
			&\hspace{15pt} -x_4^{(2)}+x_5^{(1)}, c+x_3^{(2)}-x_4^{(3)}+x_4^{(1)},c+x_4^{(2)}+x_4^{(1)}-x_6^{(2)}, x_2^{(2)}+x_2^{(1)}\\
			&\hspace{15pt} -x_6^{(3)} , x_2^{(2)}+x_2^{(1)}+x_6^{(2)}-x_4^{(4)}-x_4^{(3)}, x_2^{(2)}+x_2^{(1)}-x_3^{(2)}-x_4^{(4)}+x_4^{(2)},\\
			&\hspace{15pt} x_2^{(2)}+x_2^{(1)}-x_3^{(3)}-x_3^{(2)}+x_4^{(3)}+x_4^{(2)}-x_5^{(2)}, c+x_2^{(2)}+x_3^{(1)}-x_4^{(4)}, \\
			&\hspace{15pt} c+x_2^{(2)}-x_3^{(3)}+x_3^{(1)}+x_4^{(3)}-x_5^{(2)}, c+x_3^{(2)}+x_3^{(1)}-x_5^{(2)}\} - \text{max} \{c\\
			&\hspace{15pt} +x_6^{(1)}, c+x_2^{(2)}+x_2^{(1)}-x_3^{(3)}-x_3^{(2)}+x_5^{(1)}, x_2^{(2)}-x_3^{(3)}+x_3^{(1)}-x_4^{(2)}\\
			&\hspace{15pt} +x_5^{(1)},x_2^{(2)}-x_3^{(3)}+x_4^{(1)}, c+x_3^{(3)}+x_3^{(2)}-x_4^{(3)}-x_4^{(2)}+x_5^{(1)}, c+x_3^{(2)}\\
			&\hspace{15pt} -x_4^{(3)}+x_4^{(1)},c+x_4^{(2)}+x_4^{(1)}-x_6^{(2)}, c+x_2^{(2)}+x_2^{(1)}-x_6^{(3)} , c+x_2^{(2)}\\
			&\hspace{15pt} +x_2^{(1)}+x_6^{(2)}-x_4^{(4)}-x_4^{(3)}, c+x_2^{(2)}+x_2^{(1)}-x_3^{(2)}-x_4^{(4)}+x_4^{(2)},c+x_2^{(2)}\\
			&\hspace{15pt} +x_2^{(1)} -x_3^{(3)}-x_3^{(2)}+x_4^{(3)}+x_4^{(2)}-x_5^{(2)}, x_2^{(2)}+x_3^{(1)}-x_4^{(4)}, x_2^{(2)}-x_3^{(3)}\\
			&\hspace{15pt} +x_3^{(1)}+x_4^{(3)}-x_5^{(2)}, c+x_3^{(2)}+x_3^{(1)}-x_5^{(2)}\}, \\
x_3^{(3)'} &=-c + x_3^{(3)}  + \text{max} \{c+x_6^{(1)}, c+x_2^{(2)}+x_2^{(1)}-x_3^{(3)}-x_3^{(2)}+x_5^{(1)}, x_2^{(2)}\\
			&\hspace{15pt} -x_3^{(3)}+x_3^{(1)}-x_4^{(2)}+x_5^{(1)},x_2^{(2)}-x_3^{(3)}+x_4^{(1)}, c+x_3^{(3)}+x_3^{(2)}-x_4^{(3)}\\
			&\hspace{15pt} -x_4^{(2)}+x_5^{(1)}, c+x_3^{(2)}-x_4^{(3)}+x_4^{(1)}, c+x_4^{(2)}+x_4^{(1)}-x_6^{(2)}, c+x_2^{(2)}\\
			&\hspace{15pt} +x_2^{(1)} -x_6^{(3)} , c+x_2^{(2)}+x_2^{(1)}+x_6^{(2)}-x_4^{(4)}-x_4^{(3)},  c+x_2^{(2)}+x_2^{(1)}\\
			&\hspace{15pt} -x_3^{(2)}-x_4^{(4)}+x_4^{(2)}, c+x_2^{(2)}+x_2^{(1)}-x_3^{(3)}-x_3^{(2)}+x_4^{(3)}+x_4^{(2)}-x_5^{(2)},\\
			&\hspace{15pt}  x_2^{(2)}+x_3^{(1)}-x_4^{(4)}, x_2^{(2)} -x_3^{(3)}+x_3^{(1)}+x_4^{(3)}-x_5^{(2)}, c+x_3^{(2)}+x_3^{(1)}\\
			&\hspace{15pt} -x_5^{(2)}\} -  \breve{K},\\ 	
x_4^{(1)'} &=x_4^{(1)} + \breve{K} - \text{max} \{c+ x_6^{(1)}, x_2^{(2)}+x_2^{(1)}-x_3^{(3)}-x_3^{(2)}+x_5^{(1)}, x_2^{(2)}-x_3^{(3)}\\
			&\hspace{15pt} +x_3^{(1)}-x_4^{(2)}+x_5^{(1)},  c+x_2^{(2)}-x_3^{(3)}+x_4^{(1)}, x_3^{(3)}+x_3^{(2)}-x_4^{(3)}-x_4^{(2)}\\
			&\hspace{15pt} +x_5^{(1)}, c+x_3^{(2)}-x_4^{(3)}+x_4^{(1)}, c+x_4^{(2)}+x_4^{(1)}-x_6^{(2)}, x_2^{(2)}+x_2^{(1)}-x_6^{(3)} ,\\
			&\hspace{15pt}  x_2^{(2)}+x_2^{(1)}+x_6^{(2)}-x_4^{(4)}-x_4^{(3)}, x_2^{(2)}+x_2^{(1)}-x_3^{(2)}-x_4^{(4)}+x_4^{(2)},x_2^{(2)}\\
			&\hspace{15pt} +x_2^{(1)}-x_3^{(3)}-x_3^{(2)}+x_4^{(3)}+x_4^{(2)}-x_5^{(2)}, x_2^{(2)}+x_3^{(1)}-x_4^{(4)}, x_2^{(2)}-x_3^{(3)}\\
			&\hspace{15pt} +x_3^{(1)}+x_4^{(3)}-x_5^{(2)}, x_3^{(2)}+x_3^{(1)}-x_5^{(2)}\}, \\	
x_4^{(2)'} &=x_4^{(2)} + \breve{K} + \text{max} \{c+ x_6^{(1)}, x_2^{(2)}+x_2^{(1)}-x_3^{(3)}-x_3^{(2)}+x_5^{(1)}, x_2^{(2)}-x_3^{(3)}\\
			&\hspace{15pt} +x_3^{(1)}-x_4^{(2)}+x_5^{(1)},c+x_2^{(2)}-x_3^{(3)}+x_4^{(1)}, x_3^{(3)}+x_3^{(2)}-x_4^{(3)}-x_4^{(2)}\\
			&\hspace{15pt} +x_5^{(1)}, c+x_3^{(2)}-x_4^{(3)}+x_4^{(1)}, c+x_4^{(2)}+x_4^{(1)}-x_6^{(2)}, x_2^{(2)}+x_2^{(1)}-x_6^{(3)} ,\\
			&\hspace{15pt}  x_2^{(2)}+x_2^{(1)}+x_6^{(2)}-x_4^{(4)}-x_4^{(3)},  x_2^{(2)}+x_2^{(1)}-x_3^{(2)}-x_4^{(4)}+x_4^{(2)},x_2^{(2)}\\
			&\hspace{15pt} +x_2^{(1)}-x_3^{(3)}-x_3^{(2)}+x_4^{(3)}+x_4^{(2)}-x_5^{(2)}, x_2^{(2)}+x_3^{(1)}-x_4^{(4)}, x_2^{(2)}-x_3^{(3)}\\
			&\hspace{15pt} +x_3^{(1)}+x_4^{(3)}-x_5^{(2)}, x_3^{(2)}+x_3^{(1)}-x_5^{(2)}\} -  \text{max} \{c+ x_6^{(1)} + \text{max} \{ c\\
			&\hspace{15pt} +x_6^{(1)},c+x_2^{(2)}+x_2^{(1)}-x_3^{(3)}-x_3^{(2)}+x_5^{(1)},c+x_2^{(2)}-x_3^{(3)}+x_3^{(1)}\\
			&\hspace{15pt} -x_4^{(2)} +x_5^{(1)},c+x_2^{(2)}-x_3^{(3)}+x_4^{(1)}, c+x_3^{(3)}+x_3^{(2)}-x_4^{(3)}-x_4^{(2)}\\
			&\hspace{15pt} +x_5^{(1)}, c+x_3^{(2)} -x_4^{(3)}+x_4^{(1)}, c+x_4^{(2)}+x_4^{(1)}-x_6^{(2)}, x_2^{(2)}+x_2^{(1)}-x_6^{(3)} , \\
			&\hspace{15pt}  x_2^{(2)} +x_2^{(1)}+x_6^{(2)}-x_4^{(4)}-x_4^{(3)}, c+x_2^{(2)}+x_2^{(1)}-x_3^{(2)}-x_4^{(4)}+x_4^{(2)},c\\
			&\hspace{15pt} +x_2^{(2)}+x_2^{(1)}- x_3^{(3)}-x_3^{(2)}+x_4^{(3)}+x_4^{(2)}-x_5^{(2)}, c+x_2^{(2)}+x_3^{(1)}-x_4^{(4)}, \\
			&\hspace{15pt} c+x_2^{(2)}-x_3^{(3)}+x_3^{(1)}+x_4^{(3)}- x_5^{(2)}, c+x_3^{(2)}+x_3^{(1)}-x_5^{(2)}\}, c+x_2^{(2)}\\
			&\hspace{15pt} +x_2^{(1)} -x_3^{(3)}-x_3^{(2)}+x_5^{(1)} + \breve{K}, c+x_2^{(2)}-x_3^{(3)}+x_3^{(1)}-x_4^{(2)}+x_5^{(1)} \\
			&\hspace{15pt} + \breve{K}, c+x_2^{(2)}-x_3^{(3)}+x_4^{(1)} + \breve{K}, c+x_3^{(3)}+x_3^{(2)}-x_4^{(3)}-x_4^{(2)}+ x_5^{(1)} \\
			&\hspace{15pt}+ \breve{K}, c+x_3^{(2)}-x_4^{(3)}+x_4^{(1)} + \text{max} \{x_6^{(1)}, x_2^{(2)}+x_2^{(1)}-x_3^{(3)}-x_3^{(2)}\\
			&\hspace{15pt} +x_5^{(1)}, x_2^{(2)}-x_3^{(3)}+x_3^{(1)}-x_4^{(2)}+x_5^{(1)}, x_2^{(2)}-x_3^{(3)}+x_4^{(1)}, x_3^{(3)}+x_3^{(2)}\\
			&\hspace{15pt} -x_4^{(3)}-x_4^{(2)}+x_5^{(1)},  x_3^{(2)}-x_4^{(3)}+x_4^{(1)}, x_4^{(2)}+x_4^{(1)}-x_6^{(2)}, x_2^{(2)}+x_2^{(1)}\\
			&\hspace{15pt} -x_6^{(3)} , x_2^{(2)}+x_2^{(1)}+x_6^{(2)}-x_4^{(4)}- x_4^{(3)}, c+x_2^{(2)}+x_2^{(1)}-x_3^{(2)}-x_4^{(4)}\\
			&\hspace{15pt} +x_4^{(2)},x_2^{(2)}+x_2^{(1)}-x_3^{(3)}-x_3^{(2)}+x_4^{(3)}+x_4^{(2)}-x_5^{(2)}, x_2^{(2)}+x_3^{(1)}-x_4^{(4)},\\
			&\hspace{15pt} x_2^{(2)}-x_3^{(3)}+x_3^{(1)}+x_4^{(3)}-x_5^{(2)}, x_3^{(2)}+x_3^{(1)}-x_5^{(2)}\}, c+x_4^{(2)}+x_4^{(1)}\\
			&\hspace{15pt} -x_6^{(2)} + \text{max} \{ c+x_6^{(1)}, c+x_2^{(2)}+x_2^{(1)}-x_3^{(3)}-x_3^{(2)}+x_5^{(1)}, c+x_2^{(2)}\\
			&\hspace{15pt} -x_3^{(3)}+x_3^{(1)}-x_4^{(2)}+x_5^{(1)}, c+x_2^{(2)}-x_3^{(3)}+x_4^{(1)}, c+x_3^{(3)}+x_3^{(2)}-x_4^{(3)}\\
			&\hspace{15pt} -x_4^{(2)}+x_5^{(1)}, c+x_3^{(2)}-x_4^{(3)}+x_4^{(1)}, c+x_4^{(2)}+x_4^{(1)}-x_6^{(2)}, x_2^{(2)}+x_2^{(1)}\\
			&\hspace{15pt} -x_6^{(3)} , x_2^{(2)}+x_2^{(1)}+x_6^{(2)}-x_4^{(4)}-x_4^{(3)},  c+x_2^{(2)}+x_2^{(1)}-x_3^{(2)}-x_4^{(4)}\\
			&\hspace{15pt} +x_4^{(2)},c+x_2^{(2)}+x_2^{(1)}-x_3^{(3)}-x_3^{(2)}+x_4^{(3)}+x_4^{(2)}-x_5^{(2)}, c+x_2^{(2)}+x_3^{(1)}\\
			&\hspace{15pt} -x_4^{(4)}, c+x_2^{(2)}-x_3^{(3)}+x_3^{(1)}+x_4^{(3)}-x_5^{(2)}, c+x_3^{(2)}+x_3^{(1)}-x_5^{(2)}\},  x_2^{(2)}\\
			&\hspace{15pt} +x_2^{(1)}-x_6^{(3)} +\text{max} \{c+x_6^{(1)}, x_2^{(2)}+x_2^{(1)}-x_3^{(3)}-x_3^{(2)}+x_5^{(1)}, x_2^{(2)}\\
			&\hspace{15pt} -x_3^{(3)}+x_3^{(1)}-x_4^{(2)}+x_5^{(1)}, x_2^{(2)}-x_3^{(3)}+x_4^{(1)}, x_3^{(3)}+x_3^{(2)}-x_4^{(3)}-x_4^{(2)}\\
			&\hspace{15pt} +x_5^{(1)}, x_3^{(2)}-x_4^{(3)}+x_4^{(1)}, c+x_4^{(2)}+x_4^{(1)}-x_6^{(2)}, x_2^{(2)}+x_2^{(1)}-x_6^{(3)} , \\
			&\hspace{15pt} x_2^{(2)}+x_2^{(1)}+x_6^{(2)}-x_4^{(4)}-x_4^{(3)}, x_2^{(2)}+x_2^{(1)}-x_3^{(2)}-x_4^{(4)}+x_4^{(2)},x_2^{(2)}\\
			&\hspace{15pt} +x_2^{(1)}-x_3^{(3)}-x_3^{(2)}+x_4^{(3)}+x_4^{(2)}-x_5^{(2)}, x_2^{(2)}+x_3^{(1)}-x_4^{(4)}, x_2^{(2)}-x_3^{(3)}\\
			&\hspace{15pt} +x_3^{(1)}+x_4^{(3)}-x_5^{(2)}, x_3^{(2)}+x_3^{(1)}-x_5^{(2)}\}, x_2^{(2)}+x_2^{(1)}+x_6^{(2)} -x_4^{(4)}\\
			&\hspace{15pt} -x_4^{(3)}+\text{max} \{c+x_6^{(1)}, x_2^{(2)}+x_2^{(1)}-x_3^{(3)}-x_3^{(2)}+x_5^{(1)}, x_2^{(2)}-x_3^{(3)}\\
			&\hspace{15pt} +x_3^{(1)}-x_4^{(2)}+x_5^{(1)},x_2^{(2)}-x_3^{(3)}+x_4^{(1)}, x_3^{(3)}+x_3^{(2)}-x_4^{(3)}-x_4^{(2)}+x_5^{(1)}, \\
			&\hspace{15pt} x_3^{(2)}-x_4^{(3)}+x_4^{(1)}, x_4^{(2)}+x_4^{(1)}-x_6^{(2)}, x_2^{(2)}+x_2^{(1)}-x_6^{(3)} , x_2^{(2)}+x_2^{(1)}\\
			&\hspace{15pt} +x_6^{(2)}-x_4^{(4)}-x_4^{(3)},  x_2^{(2)}+x_2^{(1)}-x_3^{(2)}-x_4^{(4)}+x_4^{(2)},x_2^{(2)}+x_2^{(1)}-x_3^{(3)}\\
			&\hspace{15pt} -x_3^{(2)}+x_4^{(3)}+x_4^{(2)}-x_5^{(2)}, x_2^{(2)}+x_3^{(1)}-x_4^{(4)}, x_2^{(2)}-x_3^{(3)}+x_3^{(1)}+x_4^{(3)}\\
			&\hspace{15pt} -x_5^{(2)}, x_3^{(2)}+x_3^{(1)}-x_5^{(2)}\}, c+x_2^{(2)}+x_2^{(1)}-x_3^{(2)}- x_4^{(4)}+x_4^{(2)} + \breve{K}, \\
			&\hspace{15pt} c+x_2^{(2)}+x_2^{(1)}-x_3^{(3)}-x_3^{(2)}+x_4^{(3)}+x_4^{(2)}-x_5^{(2)} + \breve{K}, c+x_2^{(2)}+x_3^{(1)}\\
			&\hspace{15pt} -x_4^{(4)} + \breve{K}, c+x_2^{(2)}-x_3^{(3)}+x_3^{(1)}+x_4^{(3)}-x_5^{(2)} + \breve{K}, c+x_3^{(2)}+x_3^{(1)}\\
			&\hspace{15pt} -x_5^{(2)} + \breve{K}\} ,\\
x_4^{(3)'} &=-c +x_4^{(3)} + \text{max} \{c+ x_6^{(1)} + \text{max} \{ c+x_6^{(1)}, c+x_2^{(2)}+x_2^{(1)}-x_3^{(3)}-x_3^{(2)}\\
			&\hspace{15pt} +x_5^{(1)}, c+x_2^{(2)}-x_3^{(3)}+x_3^{(1)}-x_4^{(2)}+x_5^{(1)},c+x_2^{(2)}-x_3^{(3)}+x_4^{(1)}, c\\
			&\hspace{15pt} +x_3^{(3)}+x_3^{(2)}-x_4^{(3)}-x_4^{(2)}+x_5^{(1)}, c+x_3^{(2)}-x_4^{(3)}+x_4^{(1)}, c+x_4^{(2)}\\
			&\hspace{15pt} +x_4^{(1)}-x_6^{(2)},x_2^{(2)}+x_2^{(1)}-x_6^{(3)} ,x_2^{(2)}+x_2^{(1)}+x_6^{(2)}-x_4^{(4)}-x_4^{(3)}, c\\
			&\hspace{15pt} +x_2^{(2)}+x_2^{(1)}-x_3^{(2)}-x_4^{(4)}+x_4^{(2)},c+x_2^{(2)}+x_2^{(1)}-x_3^{(3)}-x_3^{(2)}+x_4^{(3)}\\
			&\hspace{15pt} +x_4^{(2)}-x_5^{(2)}, c+x_2^{(2)}+x_3^{(1)}-x_4^{(4)}, c+x_2^{(2)}-x_3^{(3)}+x_3^{(1)}+x_4^{(3)}\\
			&\hspace{15pt} -x_5^{(2)}, c+x_3^{(2)}+x_3^{(1)}-x_5^{(2)}\}, c+x_2^{(2)}+x_2^{(1)}-x_3^{(3)}-x_3^{(2)}+x_5^{(1)} \\
			&\hspace{15pt} + \breve{K}, c+x_2^{(2)}-x_3^{(3)}+ x_3^{(1)}-x_4^{(2)}+x_5^{(1)} + \breve{K}, c+x_2^{(2)}-x_3^{(3)}+x_4^{(1)} \\
			&\hspace{15pt} + \breve{K}, c+x_3^{(3)}+x_3^{(2)}-x_4^{(3)}-x_4^{(2)}+x_5^{(1)} + \breve{K}, c+x_3^{(2)}-x_4^{(3)} +x_4^{(1)} \\
			&\hspace{15pt} + \text{max} \{x_6^{(1)}, x_2^{(2)}+x_2^{(1)}-x_3^{(3)}-x_3^{(2)}+x_5^{(1)}, x_2^{(2)}-x_3^{(3)}+x_3^{(1)} -x_4^{(2)}\\
			&\hspace{15pt} +x_5^{(1)}, x_2^{(2)}-x_3^{(3)}+x_4^{(1)}, x_3^{(3)}+x_3^{(2)}-x_4^{(3)}-x_4^{(2)}+x_5^{(1)}, x_3^{(2)}-x_4^{(3)}\\
			&\hspace{15pt} +x_4^{(1)}, x_4^{(2)}+x_4^{(1)}-x_6^{(2)}, x_2^{(2)}+x_2^{(1)}-x_6^{(3)} , x_2^{(2)}+x_2^{(1)}+x_6^{(2)}-x_4^{(4)}\\
			&\hspace{15pt} - x_4^{(3)}, c+x_2^{(2)}+x_2^{(1)}-x_3^{(2)}-x_4^{(4)}+x_4^{(2)},x_2^{(2)}+x_2^{(1)}-x_3^{(3)}-x_3^{(2)}\\
			&\hspace{15pt} +x_4^{(3)}+x_4^{(2)}-x_5^{(2)}, x_2^{(2)}+x_3^{(1)}-x_4^{(4)}, x_2^{(2)}-x_3^{(3)}+x_3^{(1)}+x_4^{(3)}-x_5^{(2)}, \\
			&\hspace{15pt} x_3^{(2)}+x_3^{(1)}-x_5^{(2)}\}, c+x_4^{(2)}+x_4^{(1)}-x_6^{(2)}+ \text{max} \{ c+x_6^{(1)}, c+x_2^{(2)}\\
			&\hspace{15pt} +x_2^{(1)}-x_3^{(3)}-x_3^{(2)}+x_5^{(1)}, c+x_2^{(2)}-x_3^{(3)}+x_3^{(1)}-x_4^{(2)}+x_5^{(1)}, c\\
			&\hspace{15pt} +x_2^{(2)}-x_3^{(3)}+x_4^{(1)}, c+x_3^{(3)}+x_3^{(2)}-x_4^{(3)}-x_4^{(2)}+x_5^{(1)}, c+x_3^{(2)}-x_4^{(3)}\\
			&\hspace{15pt} +x_4^{(1)}, c+x_4^{(2)}+x_4^{(1)}-x_6^{(2)}, x_2^{(2)}+x_2^{(1)}-x_6^{(3)} , x_2^{(2)}+x_2^{(1)}+x_6^{(2)}\\
			&\hspace{15pt} -x_4^{(4)}-x_4^{(3)},  c+x_2^{(2)}+x_2^{(1)}-x_3^{(2)}-x_4^{(4)}+x_4^{(2)},c+x_2^{(2)}+x_2^{(1)}-x_3^{(3)}\\
			&\hspace{15pt} -x_3^{(2)}+x_4^{(3)}+x_4^{(2)}-x_5^{(2)}, c+x_2^{(2)}+x_3^{(1)}-x_4^{(4)}, c+x_2^{(2)}-x_3^{(3)}\\
			&\hspace{15pt} +x_3^{(1)} +x_4^{(3)}-x_5^{(2)}, c +x_3^{(2)}+x_3^{(1)}-x_5^{(2)}\},  x_2^{(2)}+x_2^{(1)}- x_6^{(3)} +\text{max} \{c\\
			&\hspace{15pt} +x_6^{(1)},x_2^{(2)}+x_2^{(1)}-x_3^{(3)}-x_3^{(2)}+x_5^{(1)}, x_2^{(2)}-x_3^{(3)}+x_3^{(1)}-x_4^{(2)} +x_5^{(1)},\\
			&\hspace{15pt} x_2^{(2)}-x_3^{(3)}+x_4^{(1)}, x_3^{(3)}+x_3^{(2)}-x_4^{(3)}-x_4^{(2)}+x_5^{(1)}, x_3^{(2)}-x_4^{(3)}+x_4^{(1)},\\
			&\hspace{15pt} c+x_4^{(2)}+x_4^{(1)}-x_6^{(2)}, x_2^{(2)}+x_2^{(1)}-x_6^{(3)} , x_2^{(2)}+x_2^{(1)}+x_6^{(2)}-x_4^{(4)} \\
			&\hspace{15pt} -x_4^{(3)}, x_2^{(2)}+x_2^{(1)}-x_3^{(2)}-x_4^{(4)}+x_4^{(2)},x_2^{(2)}+x_2^{(1)}-x_3^{(3)}-x_3^{(2)}+x_4^{(3)}\\
			&\hspace{15pt} +x_4^{(2)}-x_5^{(2)}, x_2^{(2)}+x_3^{(1)}-x_4^{(4)}, x_2^{(2)}-x_3^{(3)}+x_3^{(1)}+x_4^{(3)}-x_5^{(2)}, x_3^{(2)}\\
			&\hspace{15pt} +x_3^{(1)}-x_5^{(2)}\}, x_2^{(2)}+x_2^{(1)}+x_6^{(2)} -x_4^{(4)}-x_4^{(3)} +\text{max} \{c+x_6^{(1)}, x_2^{(2)}\\
			&\hspace{15pt} +x_2^{(1)}-x_3^{(3)}-x_3^{(2)}+x_5^{(1)}, x_2^{(2)}-x_3^{(3)}+x_3^{(1)}-x_4^{(2)}+x_5^{(1)}, x_2^{(2)}-x_3^{(3)}\\
			&\hspace{15pt} +x_4^{(1)}, x_3^{(3)}+x_3^{(2)}-x_4^{(3)}-x_4^{(2)}+x_5^{(1)}, x_3^{(2)}-x_4^{(3)}+x_4^{(1)}, x_4^{(2)}+x_4^{(1)}\\
			&\hspace{15pt} -x_6^{(2)}, x_2^{(2)}+x_2^{(1)}-x_6^{(3)} , x_2^{(2)}+x_2^{(1)}+x_6^{(2)}-x_4^{(4)} -x_4^{(3)}, \ x_2^{(2)}+x_2^{(1)}\\
			&\hspace{15pt} -x_3^{(2)}-x_4^{(4)}+x_4^{(2)},x_2^{(2)}+x_2^{(1)}-x_3^{(3)}-x_3^{(2)}+x_4^{(3)} +x_4^{(2)}-x_5^{(2)}, x_2^{(2)}\\
			&\hspace{15pt} +x_3^{(1)}-x_4^{(4)}, x_2^{(2)}-x_3^{(3)}+x_3^{(1)}+x_4^{(3)}-x_5^{(2)}, x_3^{(2)} +x_3^{(1)}-x_5^{(2)}\}, c\\
			&\hspace{15pt} +x_2^{(2)}+x_2^{(1)}-x_3^{(2)}- x_4^{(4)}+x_4^{(2)} + \breve{K}, c+x_2^{(2)}+x_2^{(1)} -x_3^{(3)}-x_3^{(2)}\\
			&\hspace{15pt} +x_4^{(3)}+x_4^{(2)}-x_5^{(2)} + \breve{K}, c+x_2^{(2)}+x_3^{(1)}- x_4^{(4)} + \breve{K}, c +x_2^{(2)}-x_3^{(3)}\\
			&\hspace{15pt} +x_3^{(1)}+x_4^{(3)}-x_5^{(2)} + \breve{K}, c+x_3^{(2)}+x_3^{(1)}-x_5^{(2)} + \breve{K}\}  - \breve{K} -  \text{max} \{c\\
			&\hspace{15pt} +x_6^{(1)}, c+x_2^{(2)}+x_2^{(1)}-x_3^{(3)}-x_3^{(2)}+x_5^{(1)}, c+x_2^{(2)}-x_3^{(3)}+x_3^{(1)}-x_4^{(2)}\\
			&\hspace{15pt} +x_5^{(1)},c+x_2^{(2)}-x_3^{(3)}+x_4^{(1)}, c+x_3^{(3)}+x_3^{(2)}-x_4^{(3)}-x_4^{(2)}+x_5^{(1)}, c\\
			&\hspace{15pt} +x_3^{(2)}-x_4^{(3)}+x_4^{(1)},c+x_4^{(2)}+x_4^{(1)}-x_6^{(2)}, x_2^{(2)}+x_2^{(1)}-x_6^{(3)} , x_2^{(2)}\\
			&\hspace{15pt} +x_2^{(1)}+x_6^{(2)}-x_4^{(4)}-x_4^{(3)}, x_2^{(2)}+x_2^{(1)}-x_3^{(2)}-x_4^{(4)}+x_4^{(2)},c+x_2^{(2)}\\
			&\hspace{15pt} +x_2^{(1)}-x_3^{(3)}-x_3^{(2)} +x_4^{(3)}+x_4^{(2)}-x_5^{(2)}, x_2^{(2)}+x_3^{(1)}-x_4^{(4)}, c+x_2^{(2)}\\
			&\hspace{15pt} -x_3^{(3)}+x_3^{(1)}+x_4^{(3)}-x_5^{(2)}, c+x_3^{(2)}+x_3^{(1)}-x_5^{(2)}\} \\
x_4^{(4)'} &=-c + x_4^{(4)}  + \text{max} \{c+x_6^{(1)}, c+x_2^{(2)}+x_2^{(1)}-x_3^{(3)}-x_3^{(2)}+x_5^{(1)}, c+x_2^{(2)}\\
			&\hspace{15pt}	-x_3^{(3)}+x_3^{(1)}-x_4^{(2)}+x_5^{(1)}, c+x_2^{(2)}-x_3^{(3)}+x_4^{(1)}, c+x_3^{(3)}+x_3^{(2)}\\
			&\hspace{15pt} -x_4^{(3)}-x_4^{(2)}+x_5^{(1)}, c+x_3^{(2)}-x_4^{(3)}+x_4^{(1)}, c+x_4^{(2)}+x_4^{(1)}-x_6^{(2)}, x_2^{(2)}\\
			&\hspace{15pt} +x_2^{(1)}-x_6^{(3)} , x_2^{(2)}+x_2^{(1)}+x_6^{(2)}-x_4^{(4)}-x_4^{(3)}, x_2^{(2)}+x_2^{(1)}-x_3^{(2)}-x_4^{(4)}\\
			&\hspace{15pt}+x_4^{(2)}, c+x_2^{(2)}+x_2^{(1)}-x_3^{(3)}-x_3^{(2)}+x_4^{(3)}+x_4^{(2)}-x_5^{(2)},x_2^{(2)}+x_3^{(1)}\\
			&\hspace{15pt}-x_4^{(4)}, c+x_2^{(2)}-x_3^{(3)}+x_3^{(1)}+x_4^{(3)}-x_5^{(2)}, c+x_3^{(2)}+x_3^{(1)}-x_5^{(2)}\} -  \breve{K},\\ 	 
x_5^{(1)'} &=x_5^{(1)} + \breve{K} -  \text{max} \{c+x_6^{(1)}, c+x_2^{(2)}+x_2^{(1)}-x_3^{(3)}-x_3^{(2)}+x_5^{(1)}, c+x_2^{(2)}\\
			&\hspace{15pt} -x_3^{(3)}+x_3^{(1)}-x_4^{(2)}+x_5^{(1)},c+x_2^{(2)}-x_3^{(3)}+x_4^{(1)}, c+x_3^{(3)}+x_3^{(2)}-x_4^{(3)}\\
			&\hspace{15pt} -x_4^{(2)}+x_5^{(1)}, c+x_3^{(2)}-x_4^{(3)}+x_4^{(1)},c+x_4^{(2)}+x_4^{(1)}-x_6^{(2)}, x_2^{(2)}+x_2^{(1)}\\
			&\hspace{15pt} -x_6^{(3)} , x_2^{(2)}+x_2^{(1)}+x_6^{(2)}-x_4^{(4)}-x_4^{(3)}, x_2^{(2)}+x_2^{(1)}-x_3^{(2)}-x_4^{(4)}+x_4^{(2)},\\
			&\hspace{15pt} x_2^{(2)}+x_2^{(1)}-x_3^{(3)}-x_3^{(2)}+x_4^{(3)}+x_4^{(2)}-x_5^{(2)},x_2^{(2)}+x_3^{(1)}-x_4^{(4)}, x_2^{(2)}\\
			&\hspace{15pt} -x_3^{(3)}+x_3^{(1)}+x_4^{(3)}-x_5^{(2)}, x_3^{(2)}+x_3^{(1)}-x_5^{(2)}\}, \\		 
x_5^{(2)'} &=-c + x_5^{(2)}  + \text{max} \{c+x_6^{(1)}, c+x_2^{(2)}+x_2^{(1)}-x_3^{(3)}-x_3^{(2)}+x_5^{(1)}, c+x_2^{(2)}\\
			&\hspace{15pt} -x_3^{(3)}+x_3^{(1)}-x_4^{(2)}+x_5^{(1)}, c+x_2^{(2)}-x_3^{(3)}+x_4^{(1)}, c+x_3^{(3)}+x_3^{(2)}-x_4^{(3)}\\
			&\hspace{15pt} -x_4^{(2)}+x_5^{(1)}, c+x_3^{(2)}-x_4^{(3)}+x_4^{(1)},c+x_4^{(2)}+x_4^{(1)}-x_6^{(2)}, x_2^{(2)}+x_2^{(1)}\\
			&\hspace{15pt} -x_6^{(3)} , x_2^{(2)}+x_2^{(1)}+x_6^{(2)}-x_4^{(4)}-x_4^{(3)}, x_2^{(2)}+x_2^{(1)}-x_3^{(2)}-x_4^{(4)}+x_4^{(2)},\\
			&\hspace{15pt} x_2^{(2)}+x_2^{(1)}-x_3^{(3)}-x_3^{(2)}+x_4^{(3)}+x_4^{(2)}-x_5^{(2)}, x_2^{(2)}+x_3^{(1)}-x_4^{(4)}, x_2^{(2)}\\
			&\hspace{15pt} -x_3^{(3)}+x_3^{(1)}+x_4^{(3)}-x_5^{(2)}, x_3^{(2)}+x_3^{(1)}-x_5^{(2)}\} -  \breve{K},\\ 
x_6^{(1)'} &=x_6^{(1)} + \breve{K} - \text{max} \{c+ x_6^{(1)}, x_2^{(2)}+x_2^{(1)}-x_3^{(3)}-x_3^{(2)}+x_5^{(1)}, x_2^{(2)}-x_3^{(3)}\\
			&\hspace{15pt} +x_3^{(1)}-x_4^{(2)}+x_5^{(1)}, x_2^{(2)}-x_3^{(3)}+x_4^{(1)}, x_3^{(3)}+x_3^{(2)}-x_4^{(3)}-x_4^{(2)}+x_5^{(1)}, \\
			&\hspace{15pt} x_3^{(2)}-x_4^{(3)}+x_4^{(1)}, x_4^{(2)}+x_4^{(1)}-x_6^{(2)}, x_2^{(2)}+x_2^{(1)}-x_6^{(3)} , x_2^{(2)}+x_2^{(1)}\\
			&\hspace{15pt} +x_6^{(2)}-x_4^{(4)}-x_4^{(3)}, x_2^{(2)}+x_2^{(1)}-x_3^{(2)}-x_4^{(4)}+x_4^{(2)},x_2^{(2)}+x_2^{(1)}-x_3^{(3)}\\
			&\hspace{15pt} -x_3^{(2)}+x_4^{(3)}+x_4^{(2)}-x_5^{(2)},x_2^{(2)}+x_3^{(1)}-x_4^{(4)}, x_2^{(2)}-x_3^{(3)}+x_3^{(1)}+x_4^{(3)}\\
			&\hspace{15pt} -x_5^{(2)}, x_3^{(2)}+x_3^{(1)}-x_5^{(2)}\}, \\
x_6^{(2)'} &=x_6^{(2)} + \text{max} \{c+ x_6^{(1)}, x_2^{(2)}+x_2^{(1)}-x_3^{(3)}-x_3^{(2)}+x_5^{(1)}, x_2^{(2)}-x_3^{(3)}+x_3^{(1)}\\
			&\hspace{15pt} -x_4^{(2)}+x_5^{(1)}, x_2^{(2)}-x_3^{(3)}+x_4^{(1)}, x_3^{(3)}+x_3^{(2)}-x_4^{(3)}-x_4^{(2)}+x_5^{(1)}, x_3^{(2)}\\
			&\hspace{15pt} -x_4^{(3)}+x_4^{(1)}, x_4^{(2)}+x_4^{(1)}-x_6^{(2)}, x_2^{(2)}+x_2^{(1)}-x_6^{(3)} , x_2^{(2)}+x_2^{(1)}+x_6^{(2)}\\
			&\hspace{15pt} -x_4^{(4)}-x_4^{(3)}, x_2^{(2)}+x_2^{(1)}-x_3^{(2)}-x_4^{(4)}+x_4^{(2)},x_2^{(2)}+x_2^{(1)}-x_3^{(3)}-x_3^{(2)}\\
			&\hspace{15pt} +x_4^{(3)}+x_4^{(2)}-x_5^{(2)}, x_2^{(2)}+x_3^{(1)}-x_4^{(4)}, x_2^{(2)}-x_3^{(3)}+x_3^{(1)}+x_4^{(3)}-x_5^{(2)},\\
			&\hspace{15pt} x_3^{(2)}+x_3^{(1)}-x_5^{(2)}\} -  \text{max} \{ c+x_6^{(1)}, c+x_2^{(2)}+x_2^{(1)}-x_3^{(3)}-x_3^{(2)}+x_5^{(1)}, \\
			&\hspace{15pt} c+x_2^{(2)}-x_3^{(3)}+x_3^{(1)}-x_4^{(2)}+x_5^{(1)},c+x_2^{(2)}-x_3^{(3)}+x_4^{(1)}, c+x_3^{(3)}\\
			&\hspace{15pt} +x_3^{(2)} -x_4^{(3)}-x_4^{(2)}+x_5^{(1)}, c+x_3^{(2)}-x_4^{(3)}+x_4^{(1)}, c+x_4^{(2)}+x_4^{(1)}-x_6^{(2)}, \\
			&\hspace{15pt} x_2^{(2)}+x_2^{(1)}-x_6^{(3)} , c+x_2^{(2)}+x_2^{(1)}+x_6^{(2)}-x_4^{(4)}-x_4^{(3)}, c+x_2^{(2)}+x_2^{(1)}\\
			&\hspace{15pt} -x_3^{(2)}-x_4^{(4)}+x_4^{(2)},c+x_2^{(2)}+x_2^{(1)}-x_3^{(3)}-x_3^{(2)}+x_4^{(3)}+x_4^{(2)}-x_5^{(2)},\\
	 		 &\hspace{15pt} c+x_2^{(2)}+x_3^{(1)}-x_4^{(4)}, c+x_2^{(2)}-x_3^{(3)}+x_3^{(1)}+x_4^{(3)}-x_5^{(2)}, c+x_3^{(2)}\\
			 &\hspace{15pt} +x_3^{(1)}-x_5^{(2)}\}, \\
x_6^{(3)'} &=-c + x_6^{(3)}  + \text{max} \{ c+x_6^{(1)}, c+x_2^{(2)}+x_2^{(1)}-x_3^{(3)}-x_3^{(2)}+x_5^{(1)}, c+x_2^{(2)}\\
			&\hspace{15pt} -x_3^{(3)}+x_3^{(1)}-x_4^{(2)}+x_5^{(1)}, c+x_2^{(2)}-x_3^{(3)}+x_4^{(1)}, c+x_3^{(3)}+x_3^{(2)}-x_4^{(3)}\\
			&\hspace{15pt} -x_4^{(2)}+x_5^{(1)}, c+x_3^{(2)}-x_4^{(3)}+x_4^{(1)}, c+x_4^{(2)}+x_4^{(1)}-x_6^{(2)}, x_2^{(2)}+x_2^{(1)}\\
			&\hspace{15pt} -x_6^{(3)} , c+x_2^{(2)}+x_2^{(1)}+x_6^{(2)}-x_4^{(4)}-x_4^{(3)}, c+x_2^{(2)}+x_2^{(1)}-x_3^{(2)}\\
			&\hspace{15pt} -x_4^{(4)} +x_4^{(2)},c+x_2^{(2)}+x_2^{(1)}-x_3^{(3)}-x_3^{(2)}+x_4^{(3)}+x_4^{(2)}-x_5^{(2)},c+x_2^{(2)}\\
			&\hspace{15pt} +x_3^{(1)} -x_4^{(4)}, c+x_2^{(2)}-x_3^{(3)}+x_3^{(1)}+x_4^{(3)}-x_5^{(2)}, c+x_3^{(2)}+x_3^{(1)}\\
			&\hspace{15pt} -x_5^{(2)}\} -  \breve{K}.
\end{align*}

As shown in \cite{BK, N}, $\mathcal{X}$ with maps $\e_k, \f_k :\mathcal{X} \longrightarrow \mathcal{X}\cup \{0\}, \; \veps_k, \vphi_k : \mathcal{X} \longrightarrow \mZ, \; 0\leq k \leq 6$ and $\text{wt}: \mathcal{X} \longrightarrow P_{cl}$ is a Kashiwara crystal where for $x \in \mathcal{X}$
\begin{align*}
\e_k(x) &= \mathcal{UD}(e_k^c)(x)\arrowvert_{c=1}, \; \f_k(x) = \mathcal{UD}(e_k^c)(x)\arrowvert_{c=-1}, \\
\text{wt}(x) &= \sum_{k=0}^6\text{wt}_k(x)\L_k, \text{where} \; \text{wt}_k(x) = \mathcal{UD}(\gamma_k)(x), \\
\veps_k(x) &= \mathcal{UD}(\veps_k)(x), \; \vphi_k(x) = \text{wt}_k(x) + \veps_k(x).
\end{align*}

In particular, the explicit actions of $\f_k, 1\leq k \leq 6$ on $\mathcal{X}$ is given as follows.
\begin{align*}
\tilde{f_1}(x) & =(x_6^{(3)},x_4^{(4)}, x_3^{(3)}, x_2^{(2)}, x_5^{(2)}, x_4^{(3)}, x_3^{(2)}, x_6^{(2)}, x_4^{(2)}, x_5^{(1)},x_1^{(1)}-1, x_2^{(1)}, x_3^{(1)}, x_4^{(1)}, \\
			& \hspace{15pt} x_6^{(1)}),\\
\tilde{f_2}(x)  & =\begin{cases} (x_6^{(3)},x_4^{(4)}, x_3^{(3)}, x_2^{(2)}-1, x_5^{(2)}, x_4^{(3)}, x_3^{(2)}, x_6^{(2)}, x_4^{(2)}, x_5^{(1)},x_1^{(1)}, x_2^{(1)}, x_3^{(1)}, x_4^{(1)},  \\
						\hspace{15pt} x_6^{(1)}) \ \text{if} \  x_2^{(2)}+x_2^{(1)} > x_1^{(1)}+x_3^{(2)},\\ 
						(x_6^{(3)},x_4^{(4)}, x_3^{(3)}, x_2^{(2)}, x_5^{(2)}, x_4^{(3)}, x_3^{(2)}, x_6^{(2)}, x_4^{(2)}, x_5^{(1)},x_1^{(1)}, x_2^{(1)}-1, x_3^{(1)}, x_4^{(1)},  \\
						\hspace{15pt} x_6^{(1)}) \ \text{if} \  x_2^{(2)}+x_2^{(1)} \leq x_1^{(1)}+x_3^{(2)}, \end{cases}\\
\tilde{f_3}(x)  & =\begin{cases} (x_6^{(3)},x_4^{(4)}, x_3^{(3)}-1, x_2^{(2)}, x_5^{(2)}, x_4^{(3)}, x_3^{(2)}, x_6^{(2)}, x_4^{(2)}, x_5^{(1)},x_1^{(1)}, x_2^{(1)}, x_3^{(1)}, x_4^{(1)}, \\ 
						\hspace{15pt} x_6^{(1)}) \ \text{if} \ x_3^{(3)}+x_3^{(2)} > x_2^{(2)}+x_4^{(3)},\\
						\hspace{1.7cm} x_3^{(3)}+2x_3^{(2)}+x_3^{(1)} > x_2^{(2)}+x_2^{(1)}+x_4^{(3)}+x_4^{(2)}, \end{cases}\\
\tilde{f_3}(x)  & =\begin{cases}	(x_6^{(3)},x_4^{(4)}, x_3^{(3)}, x_2^{(2)}, x_5^{(2)}, x_4^{(3)}, x_3^{(2)}-1, x_6^{(2)}, x_4^{(2)}, x_5^{(1)},x_1^{(1)}, x_2^{(1)}, x_3^{(1)}, x_4^{(1)}, \\
						\hspace{15pt} x_6^{(1)})  \ \text{if} \ x_3^{(3)}+x_3^{(2)} \leq x_2^{(2)}+x_4^{(3)}, \ x_3^{(2)}+x_3^{(1)} > x_2^{(1)}+x_4^{(2)},\\ 
						(x_6^{(3)},x_4^{(4)}, x_3^{(3)}, x_2^{(2)}, x_5^{(2)}, x_4^{(3)}, x_3^{(2)}, x_6^{(2)}, x_4^{(2)}, x_5^{(1)},x_1^{(1)}, x_2^{(1)}, x_3^{(1)}-1, x_4^{(1)},  \\
						\hspace{15pt} x_6^{(1)})  \ \text{if} \ x_3^{(2)}+x_3^{(1)} \leq x_2^{(1)}+x_4^{(2)},  \\
						 \hspace{1.7cm} x_3^{(3)}+2x_3^{(2)}+x_3^{(1)} \leq x_2^{(2)}+x_2^{(1)}+x_4^{(3)}+x_4^{(2)}, \end{cases}\\
\tilde{f_4}(x)  & =\begin{cases} (x_6^{(3)},x_4^{(4)}-1, x_3^{(3)}, x_2^{(2)}, x_5^{(2)}, x_4^{(3)}, x_3^{(2)}, x_6^{(2)}, x_4^{(2)}, x_5^{(1)},x_1^{(1)}, x_2^{(1)}, x_3^{(1)}, x_4^{(1)},  \\
						\hspace{15pt} x_6^{(1)}) \ \text{if} \  x_4^{(4)}+x_4^{(3)}> x_3^{(3)}+x_5^{(2)}, \\
							\hspace{1.7cm} x_4^{(4)}+2x_4^{(3)}+x_4^{(2)} > x_3^{(3)}+x_3^{(2)}+x_5^{(2)}+x_6^{(2)}, \\
							 \hspace{25pt} x_4^{(4)}+2x_4^{(3)}+2x_4^{(2)}+x_4^{(1)} > x_3^{(3)}	+x_3^{(2)}	+x_3^{(1)}	+x_5^{(2)}+x_5^{(1)}+x_6^{(2)}, \\ 
						(x_6^{(3)},x_4^{(4)}, x_3^{(3)}, x_2^{(2)}, x_5^{(2)}, x_4^{(3)}-1, x_3^{(2)}, x_6^{(2)}, x_4^{(2)}, x_5^{(1)},x_1^{(1)}, x_2^{(1)}, x_3^{(1)}, x_4^{(1)},  \\
						\hspace{15pt} x_6^{(1)}) \ \text{if} \  x_4^{(4)}+x_4^{(3)} \leq x_3^{(3)}+x_5^{(2)}, \ x_4^{(3)}+x_4^{(2)}> x_3^{(2)}+x_6^{(2)}, \\ 
							\hspace{1.7cm} x_4^{(3)}+2x_4^{(2)}+x_4^{(1)} > x_3^{(2)}+x_3^{(1)}+x_5^{(1)}+x_6^{(2)}, \\
						(x_6^{(3)},x_4^{(4)}, x_3^{(3)}, x_2^{(2)}, x_5^{(2)}, x_4^{(3)}, x_3^{(2)}, x_6^{(2)}, x_4^{(2)}-1, x_5^{(1)},x_1^{(1)}, x_2^{(1)}, x_3^{(1)}, x_4^{(1)},  \\
						\hspace{15pt}x_6^{(1)}) \ \text{if} \  x_4^{(4)}+2x_4^{(3)}+x_4^{(2)} \leq x_3^{(3)}+x_3^{(2)}+x_5^{(2)}+x_6^{(2)},  \\ 
							\hspace{1.7cm}  x_4^{(3)}+x_4^{(2)} \leq x_3^{(2)}+x_6^{(2)},x_4^{(2)}+x_4^{(1)}> x_3^{(1)}+x_5^{(1)},   \\
						(x_6^{(3)},x_4^{(4)}, x_3^{(3)}, x_2^{(2)}, x_5^{(2)}, x_4^{(3)}, x_3^{(2)}, x_6^{(2)}, x_4^{(2)}, x_5^{(1)},x_1^{(1)}, x_2^{(1)}, x_3^{(1)}, x_4^{(1)}-1,  \\
						\hspace{15pt}x_6^{(1)}) \ \text{if} \ x_4^{(2)}+x_4^{(1)} \leq x_3^{(1)}+x_5^{(1)}, \\							
							\hspace{1.7cm} x_4^{(3)}+2x_4^{(2)}+x_4^{(1)} \leq x_3^{(2)}+x_3^{(1)}+x_5^{(1)}+x_6^{(2)}, \\ 
							\hspace{25pt} x_4^{(4)}+2x_4^{(3)}+2x_4^{(2)}+x_4^{(1)} \leq x_3^{(3)}+x_3^{(2)}+x_3^{(1)}	+x_5^{(2)}+x_5^{(1)}+x_6^{(2)},    \end{cases}\\
\tilde{f_5}(x) & =\begin{cases} (x_6^{(3)},x_4^{(4)}, x_3^{(3)}, x_2^{(2)}, x_5^{(2)}-1, x_4^{(3)}, x_3^{(2)}, x_6^{(2)}, x_4^{(2)}, x_5^{(1)},x_1^{(1)}, x_2^{(1)}, x_3^{(1)}, x_4^{(1)},   \\ 
						\hspace{15pt}x_6^{(1)}) \ \text{if} \  x_5^{(2)}+x_5^{(1)} > x_4^{(3)}+x_4^{(2)},\\ 
						(x_6^{(3)},x_4^{(4)}, x_3^{(3)}, x_2^{(2)}, x_5^{(2)}, x_4^{(3)}, x_3^{(2)}, x_6^{(2)}, x_4^{(2)}, x_5^{(1)}-1,x_1^{(1)}, x_2^{(1)}, x_3^{(1)}, x_4^{(1)},  \\
						\hspace{15pt}x_6^{(1)}) \ \text{if} \  x_5^{(2)}+x_5^{(1)} \leq x_4^{(3)}+x_4^{(2)}, \end{cases} \\
\tilde{f_6}(x)  & =\begin{cases} (x_6^{(3)}-1,x_4^{(4)}, x_3^{(3)}, x_2^{(2)}, x_5^{(2)}, x_4^{(3)}, x_3^{(2)}, x_6^{(2)}, x_4^{(2)}, x_5^{(1)},x_1^{(1)}, x_2^{(1)}, x_3^{(1)}, x_4^{(1)},  \\ 
						\hspace{15pt} x_6^{(1)}) \ \text{if} \ x_6^{(3)}+x_6^{(2)} > x_4^{(4)}+x_4^{(3)},\\
							\hspace{1.7cm} x_6^{(3)}+2x_6^{(2)}+x_6^{(1)} > x_4^{(4)}+x_4^{(3)}+x_4^{(2)}+x_4^{(1)}, \\ 
						(x_6^{(3)},x_4^{(4)}, x_3^{(3)}, x_2^{(2)}, x_5^{(2)}, x_4^{(3)}, x_3^{(2)}, x_6^{(2)}-1, x_4^{(2)}, x_5^{(1)},x_1^{(1)}, x_2^{(1)}, x_3^{(1)}, x_4^{(1)},  \\
						\hspace{15pt} x_6^{(1)}) \ \text{if} \ x_6^{(3)}+x_6^{(2)} \leq x_4^{(4)}+x_4^{(3)}, \ x_6^{(2)}+x_6^{(1)} > x_4^{(2)}+x_4^{(1)},\\ 
						(x_6^{(3)},x_4^{(4)}, x_3^{(3)}, x_2^{(2)}, x_5^{(2)}, x_4^{(3)}, x_3^{(2)}, x_6^{(2)}, x_4^{(2)}, x_5^{(1)},x_1^{(1)}, x_2^{(1)}, x_3^{(1)}, x_4^{(1)},  \\
						\hspace{15pt} x_6^{(1)}-1) \  \text{if} \ x_6^{(2)}+x_6^{(1)} \leq x_4^{(2)}+x_4^{(1)},  \\
							\hspace{2.3cm} x_6^{(3)}+2x_6^{(2)}+x_6^{(1)} \leq x_4^{(4)}+x_4^{(3)}+x_4^{(2)}+x_4^{(1)}. \end{cases}
\end{align*}

To determine the explicit action of $\tilde{f_0}(x)$ we define conditions $(\breve{F1})-(\breve{F14})$ as follows.
\begin{align*}
(\breve{F1}) 	& \hspace{5pt}x_2^{(2)}+x_2^{(1)}-x_6^{(3)} \geq x_6^{(1)}, \\
		& \hspace{5pt}x_2^{(2)}+x_2^{(1)}-x_6^{(3)} \geq x_2^{(2)}+x_2^{(1)}-x_3^{(3)}-x_3^{(2)}+x_5^{(1)}, \\
		& \hspace{5pt}x_2^{(2)}+x_2^{(1)}-x_6^{(3)} \geq x_2^{(2)}-x_3^{(3)}+x_3^{(1)}-x_4^{(2)}+x_5^{(1)} , \\
		& \hspace{5pt}x_2^{(2)}+x_2^{(1)}-x_6^{(3)} \geq x_2^{(2)}-x_3^{(3)}+x_4^{(1)}, \\
		& \hspace{5pt}x_2^{(2)}+x_2^{(1)}-x_6^{(3)} \geq x_3^{(2)}+x_3^{(1)}-x_4^{(3)}-x_4^{(2)}+x_5^{(1)}, \\
		& \hspace{5pt}x_2^{(2)}+x_2^{(1)}-x_6^{(3)} \geq x_3^{(2)}-x_4^{(3)}+x_4^{(1)}, \\
		& \hspace{5pt}x_2^{(2)}+x_2^{(1)}-x_6^{(3)} \geq x_4^{(2)}+x_4^{(1)}-x_6^{(2)}, \\
		& \hspace{5pt}x_2^{(2)}+x_2^{(1)}-x_6^{(3)} \geq x_2^{(2)}+x_2^{(1)}-x_4^{(4)}-x_4^{(3)}+x_6^{(2)}, \\
		& \hspace{5pt}x_2^{(2)}+x_2^{(1)}-x_6^{(3)} \geq x_2^{(2)}+x_2^{(1)}-x_3^{(2)}-x_4^{(4)}+x_4^{(2)}, \\
		& \hspace{5pt}x_2^{(2)}+x_2^{(1)}-x_6^{(3)} \geq x_2^{(2)}+x_2^{(1)}-x_3^{(3)}-x_3^{(2)}+x_4^{(3)}+x_4^{(2)}-x_5^{(2)}, \\
		& \hspace{5pt}x_2^{(2)}+x_2^{(1)}-x_6^{(3)} \geq x_2^{(2)}+x_3^{(1)}-x_4^{(4)}, \\
		& \hspace{5pt}x_2^{(2)}+x_2^{(1)}-x_6^{(3)} \geq x_2^{(2)}-x_3^{(3)}+x_3^{(1)}+x_4^{(3)}-x_5^{(2)}, \\
		& \hspace{5pt}x_2^{(2)}+x_2^{(1)}-x_6^{(3)} \geq x_3^{(2)}+x_3^{(3)}-x_5^{(2)}, \\
(\breve{F2})	& \hspace{5pt}x_2^{(2)}+x_2^{(1)}-x_4^{(4)}-x_4^{(3)}+x_6^{(2)} \geq x_6^{(1)}, \\
		& \hspace{5pt}x_2^{(2)}+x_2^{(1)}-x_4^{(4)}-x_4^{(3)}+x_6^{(2)} \geq x_2^{(2)}+x_2^{(1)}-x_3^{(3)}-x_3^{(2)}+x_5^{(1)}, \\
		& \hspace{5pt}x_2^{(2)}+x_2^{(1)}-x_4^{(4)}-x_4^{(3)}+x_6^{(2)} \geq x_2^{(2)}-x_3^{(3)}+x_3^{(1)}-x_4^{(2)}+x_5^{(1)} , \\
		& \hspace{5pt}x_2^{(2)}+x_2^{(1)}-x_4^{(4)}-x_4^{(3)}+x_6^{(2)} \geq x_2^{(2)}-x_3^{(3)}+x_4^{(1)}, \\
		& \hspace{5pt}x_2^{(2)}+x_2^{(1)}-x_4^{(4)}-x_4^{(3)}+x_6^{(2)} \geq x_3^{(2)}+x_3^{(1)}-x_4^{(3)}-x_4^{(2)}+x_5^{(1)}, \\
		& \hspace{5pt}x_2^{(2)}+x_2^{(1)}-x_4^{(4)}-x_4^{(3)}+x_6^{(2)} \geq x_3^{(2)}-x_4^{(3)}+x_4^{(1)}, \\
		& \hspace{5pt}x_2^{(2)}+x_2^{(1)}-x_4^{(4)}-x_4^{(3)}+x_6^{(2)} \geq x_4^{(2)}+x_4^{(1)}-x_6^{(2)}, \\
		& \hspace{5pt}x_2^{(2)}+x_2^{(1)}-x_4^{(4)}-x_4^{(3)}+x_6^{(2)} > x_2^{(2)}+x_2^{(1)}-x_6^{(3)}, \\
		& \hspace{5pt}x_2^{(2)}+x_2^{(1)}-x_4^{(4)}-x_4^{(3)}+x_6^{(2)} \geq x_2^{(2)}+x_2^{(1)}-x_3^{(2)}-x_4^{(4)}+x_4^{(2)}, \\
		& \hspace{5pt}x_2^{(2)}+x_2^{(1)}-x_4^{(4)}-x_4^{(3)}+x_6^{(2)} \geq x_2^{(2)}+x_2^{(1)}-x_3^{(3)}-x_3^{(2)}+x_4^{(3)}+x_4^{(2)}-x_5^{(2)}, \\
		& \hspace{5pt}x_2^{(2)}+x_2^{(1)}-x_4^{(4)}-x_4^{(3)}+x_6^{(2)} \geq x_2^{(2)}+x_3^{(1)}-x_4^{(4)}, \\
		& \hspace{5pt}x_2^{(2)}+x_2^{(1)}-x_4^{(4)}-x_4^{(3)}+x_6^{(2)} \geq x_2^{(2)}-x_3^{(3)}+x_3^{(1)}+x_4^{(3)}-x_5^{(2)}, \\
		& \hspace{5pt}x_2^{(2)}+x_2^{(1)}-x_4^{(4)}-x_4^{(3)}+x_6^{(2)} \geq x_3^{(2)}+x_3^{(3)}-x_5^{(2)}, \\
(\breve{F3}) 	& \hspace{5pt}x_2^{(2)}+x_2^{(1)}-x_3^{(2)}-x_4^{(4)}+x_4^{(2)} \geq x_6^{(1)}, \\
		& \hspace{5pt}x_2^{(2)}+x_2^{(1)}-x_3^{(2)}-x_4^{(4)}+x_4^{(2)} \geq x_2^{(2)}+x_2^{(1)}-x_3^{(3)}-x_3^{(2)}+x_5^{(1)}, \\
		& \hspace{5pt}x_2^{(2)}+x_2^{(1)}-x_3^{(2)}-x_4^{(4)}+x_4^{(2)} \geq x_2^{(2)}-x_3^{(3)}+x_3^{(1)}-x_4^{(2)}+x_5^{(1)} , \\
		& \hspace{5pt}x_2^{(2)}+x_2^{(1)}-x_3^{(2)}-x_4^{(4)}+x_4^{(2)} \geq x_2^{(2)}-x_3^{(3)}+x_4^{(1)}, \\
		& \hspace{5pt}x_2^{(2)}+x_2^{(1)}-x_3^{(2)}-x_4^{(4)}+x_4^{(2)} \geq x_3^{(2)}+x_3^{(1)}-x_4^{(3)}-x_4^{(2)}+x_5^{(1)}, \\
		& \hspace{5pt}x_2^{(2)}+x_2^{(1)}-x_3^{(2)}-x_4^{(4)}+x_4^{(2)} \geq x_3^{(2)}-x_4^{(3)}+x_4^{(1)}, \\
		& \hspace{5pt}x_2^{(2)}+x_2^{(1)}-x_3^{(2)}-x_4^{(4)}+x_4^{(2)} \geq x_4^{(2)}+x_4^{(1)}-x_6^{(2)}, \\
		& \hspace{5pt}x_2^{(2)}+x_2^{(1)}-x_3^{(2)}-x_4^{(4)}+x_4^{(2)} > x_2^{(2)}+x_2^{(1)}-x_6^{(3)}, \\
		& \hspace{5pt}x_2^{(2)}+x_2^{(1)}-x_3^{(2)}-x_4^{(4)}+x_4^{(2)} > x_2^{(2)}+x_2^{(1)}-x_4^{(4)}-x_4^{(3)}+x_6^{(2)}, \\
		& \hspace{5pt}x_2^{(2)}+x_2^{(1)}-x_3^{(2)}-x_4^{(4)}+x_4^{(2)} \geq x_2^{(2)}+x_2^{(1)}-x_3^{(3)}-x_3^{(2)}+x_4^{(3)}+x_4^{(2)}-x_5^{(2)}, \\
		& \hspace{5pt}x_2^{(2)}+x_2^{(1)}-x_3^{(2)}-x_4^{(4)}+x_4^{(2)} \geq x_2^{(2)}+x_3^{(1)}-x_4^{(4)}, \\
		& \hspace{5pt}x_2^{(2)}+x_2^{(1)}-x_3^{(2)}-x_4^{(4)}+x_4^{(2)} \geq x_2^{(2)}-x_3^{(3)}+x_3^{(1)}+x_4^{(3)}-x_5^{(2)}, \\
		& \hspace{5pt}x_2^{(2)}+x_2^{(1)}-x_3^{(2)}-x_4^{(4)}+x_4^{(2)} \geq x_3^{(2)}+x_3^{(3)}-x_5^{(2)}, \\
(\breve{F4}) 	& \hspace{5pt}x_2^{(2)}+x_2^{(1)}-x_3^{(3)}-x_3^{(2)}+x_4^{(3)}+x_4^{(2)}-x_5^{(2)} \geq x_6^{(1)}, \\
		& \hspace{5pt}x_2^{(2)}+x_2^{(1)}-x_3^{(3)}-x_3^{(2)}+x_4^{(3)}+x_4^{(2)}-x_5^{(2)} \geq x_2^{(2)}+x_2^{(1)}-x_3^{(3)}-x_3^{(2)}+x_5^{(1)}, \\
		& \hspace{5pt}x_2^{(2)}+x_2^{(1)}-x_3^{(3)}-x_3^{(2)}+x_4^{(3)}+x_4^{(2)}-x_5^{(2)} \geq x_2^{(2)}-x_3^{(3)}+x_3^{(1)}-x_4^{(2)}+x_5^{(1)} , \\
		& \hspace{5pt}x_2^{(2)}+x_2^{(1)}-x_3^{(3)}-x_3^{(2)}+x_4^{(3)}+x_4^{(2)}-x_5^{(2)} \geq x_2^{(2)}-x_3^{(3)}+x_4^{(1)}, \\
		& \hspace{5pt}x_2^{(2)}+x_2^{(1)}-x_3^{(3)}-x_3^{(2)}+x_4^{(3)}+x_4^{(2)}-x_5^{(2)} \geq x_3^{(2)}+x_3^{(1)}-x_4^{(3)}-x_4^{(2)}+x_5^{(1)}, \\
		& \hspace{5pt}x_2^{(2)}+x_2^{(1)}-x_3^{(3)}-x_3^{(2)}+x_4^{(3)}+x_4^{(2)}-x_5^{(2)} \geq x_3^{(2)}-x_4^{(3)}+x_4^{(1)}, \\
		& \hspace{5pt}x_2^{(2)}+x_2^{(1)}-x_3^{(3)}-x_3^{(2)}+x_4^{(3)}+x_4^{(2)}-x_5^{(2)} \geq x_4^{(2)}+x_4^{(1)}-x_6^{(2)}, \\
		& \hspace{5pt}x_2^{(2)}+x_2^{(1)}-x_3^{(3)}-x_3^{(2)}+x_4^{(3)}+x_4^{(2)}-x_5^{(2)} > x_2^{(2)}+x_2^{(1)}-x_6^{(3)}, \\
		& \hspace{5pt}x_2^{(2)}+x_2^{(1)}-x_3^{(3)}-x_3^{(2)}+x_4^{(3)}+x_4^{(2)}-x_5^{(2)} > x_2^{(2)}+x_2^{(1)}-x_4^{(4)}-x_4^{(3)}+x_6^{(2)}, \\
		& \hspace{5pt}x_2^{(2)}+x_2^{(1)}-x_3^{(3)}-x_3^{(2)}+x_4^{(3)}+x_4^{(2)}-x_5^{(2)} > x_2^{(2)}+x_2^{(1)}-x_3^{(2)}-x_4^{(4)}+x_4^{(2)}, \\
		& \hspace{5pt}x_2^{(2)}+x_2^{(1)}-x_3^{(3)}-x_3^{(2)}+x_4^{(3)}+x_4^{(2)}-x_5^{(2)} \geq x_2^{(2)}+x_3^{(1)}-x_4^{(4)}, \\
		& \hspace{5pt}x_2^{(2)}+x_2^{(1)}-x_3^{(3)}-x_3^{(2)}+x_4^{(3)}+x_4^{(2)}-x_5^{(2)} \geq x_2^{(2)}-x_3^{(3)}+x_3^{(1)}+x_4^{(3)}-x_5^{(2)}, \\
		& \hspace{5pt}x_2^{(2)}+x_2^{(1)}-x_3^{(3)}-x_3^{(2)}+x_4^{(3)}+x_4^{(2)}-x_5^{(2)} \geq x_3^{(2)}+x_3^{(3)}-x_5^{(2)}, \\
(\breve{F5}) 	& \hspace{5pt}x_2^{(2)}+x_3^{(1)}-x_4^{(4)} \geq x_6^{(1)}, \\
		& \hspace{5pt}x_2^{(2)}+x_3^{(1)}-x_4^{(4)} \geq x_2^{(2)}+x_2^{(1)}-x_3^{(3)}-x_3^{(2)}+x_5^{(1)}, \\
		& \hspace{5pt}x_2^{(2)}+x_3^{(1)}-x_4^{(4)} \geq x_2^{(2)}-x_3^{(3)}+x_3^{(1)}-x_4^{(2)}+x_5^{(1)} , \\
		& \hspace{5pt}x_2^{(2)}+x_3^{(1)}-x_4^{(4)} \geq x_2^{(2)}-x_3^{(3)}+x_4^{(1)}, \\
		& \hspace{5pt}x_2^{(2)}+x_3^{(1)}-x_4^{(4)} \geq x_3^{(2)}+x_3^{(1)}-x_4^{(3)}-x_4^{(2)}+x_5^{(1)}, \\
		& \hspace{5pt}x_2^{(2)}+x_3^{(1)}-x_4^{(4)} \geq x_3^{(2)}-x_4^{(3)}+x_4^{(1)}, \\
		& \hspace{5pt}x_2^{(2)}+x_3^{(1)}-x_4^{(4)} \geq x_4^{(2)}+x_4^{(1)}-x_6^{(2)}, \\
		& \hspace{5pt}x_2^{(2)}+x_3^{(1)}-x_4^{(4)} > x_2^{(2)}+x_2^{(1)}-x_6^{(3)}, \\
		& \hspace{5pt}x_2^{(2)}+x_3^{(1)}-x_4^{(4)} > x_2^{(2)}+x_2^{(1)}-x_4^{(4)}-x_4^{(3)}+x_6^{(2)}, \\
		& \hspace{5pt}x_2^{(2)}+x_3^{(1)}-x_4^{(4)} > x_2^{(2)}+x_2^{(1)}-x_3^{(2)}-x_4^{(4)}+x_4^{(2)}, \\
		& \hspace{5pt}x_2^{(2)}+x_3^{(1)}-x_4^{(4)} \geq x_2^{(2)}+x_2^{(1)}-x_3^{(3)}-x_3^{(2)}+x_4^{(3)}+x_4^{(2)}-x_5^{(2)}, \\
		& \hspace{5pt}x_2^{(2)}+x_3^{(1)}-x_4^{(4)} \geq x_2^{(2)}-x_3^{(3)}+x_3^{(1)}+x_4^{(3)}-x_5^{(2)}, \\
		& \hspace{5pt}x_2^{(2)}+x_3^{(1)}-x_4^{(4)} \geq x_3^{(2)}+x_3^{(3)}-x_5^{(2)}, \\
(\breve{F6}) 	& \hspace{5pt}x_2^{(2)}+x_2^{(1)}-x_3^{(3)}-x_3^{(2)}+x_5^{(1)} \geq x_6^{(1)}, \\
		& \hspace{5pt}x_2^{(2)}+x_2^{(1)}-x_3^{(3)}-x_3^{(2)}+x_5^{(1)} \geq x_2^{(2)}-x_3^{(3)}+x_3^{(1)}-x_4^{(2)}+x_5^{(1)} , \\
		& \hspace{5pt}x_2^{(2)}+x_2^{(1)}-x_3^{(3)}-x_3^{(2)}+x_5^{(1)} \geq x_2^{(2)}-x_3^{(3)}+x_4^{(1)}, \\
		& \hspace{5pt}x_2^{(2)}+x_2^{(1)}-x_3^{(3)}-x_3^{(2)}+x_5^{(1)} \geq x_3^{(2)}+x_3^{(1)}-x_4^{(3)}-x_4^{(2)}+x_5^{(1)}, \\
		& \hspace{5pt}x_2^{(2)}+x_2^{(1)}-x_3^{(3)}-x_3^{(2)}+x_5^{(1)} \geq x_3^{(2)}-x_4^{(3)}+x_4^{(1)}, \\
		& \hspace{5pt}x_2^{(2)}+x_2^{(1)}-x_3^{(3)}-x_3^{(2)}+x_5^{(1)} \geq x_4^{(2)}+x_4^{(1)}-x_6^{(2)}, \\
		& \hspace{5pt}x_2^{(2)}+x_2^{(1)}-x_3^{(3)}-x_3^{(2)}+x_5^{(1)} > x_2^{(2)}+x_2^{(1)}-x_6^{(3)}, \\
		& \hspace{5pt}x_2^{(2)}+x_2^{(1)}-x_3^{(3)}-x_3^{(2)}+x_5^{(1)} > x_2^{(2)}+x_2^{(1)}-x_4^{(4)}-x_4^{(3)}+x_6^{(2)}, \\
		& \hspace{5pt}x_2^{(2)}+x_2^{(1)}-x_3^{(3)}-x_3^{(2)}+x_5^{(1)} > x_2^{(2)}+x_2^{(1)}-x_3^{(2)}-x_4^{(4)}+x_4^{(2)}, \\
		& \hspace{5pt}x_2^{(2)}+x_2^{(1)}-x_3^{(3)}-x_3^{(2)}+x_5^{(1)} > x_2^{(2)}+x_2^{(1)}-x_3^{(3)}-x_3^{(2)}+x_4^{(3)}+x_4^{(2)}-x_5^{(2)}, \\
		& \hspace{5pt}x_2^{(2)}+x_2^{(1)}-x_3^{(3)}-x_3^{(2)}+x_5^{(1)} \geq x_2^{(2)}+x_3^{(1)}-x_4^{(4)}, \\
		& \hspace{5pt}x_2^{(2)}+x_2^{(1)}-x_3^{(3)}-x_3^{(2)}+x_5^{(1)} \geq x_2^{(2)}-x_3^{(3)}+x_3^{(1)}+x_4^{(3)}-x_5^{(2)}, \\
		& \hspace{5pt}x_2^{(2)}+x_2^{(1)}-x_3^{(3)}-x_3^{(2)}+x_5^{(1)} \geq x_3^{(2)}+x_3^{(3)}-x_5^{(2)}, \\	
(\breve{F7})	& \hspace{5pt}x_2^{(2)}-x_3^{(3)}+x_3^{(1)}+x_4^{(3)}-x_5^{(2)}  \geq x_6^{(1)}, \\
		& \hspace{5pt}x_2^{(2)}-x_3^{(3)}+x_3^{(1)}+x_4^{(3)}-x_5^{(2)} \geq x_2^{(2)}+x_2^{(1)}-x_3^{(3)}-x_3^{(2)}+x_5^{(1)}, \\
		& \hspace{5pt}x_2^{(2)}-x_3^{(3)}+x_3^{(1)}+x_4^{(3)}-x_5^{(2)} \geq x_2^{(2)}-x_3^{(3)}+x_3^{(1)}-x_4^{(2)}+x_5^{(1)} , \\
		& \hspace{5pt}x_2^{(2)}-x_3^{(3)}+x_3^{(1)}+x_4^{(3)}-x_5^{(2)} \geq x_2^{(2)}-x_3^{(3)}+x_4^{(1)}, \\
		& \hspace{5pt}x_2^{(2)}-x_3^{(3)}+x_3^{(1)}+x_4^{(3)}-x_5^{(2)} \geq x_3^{(2)}+x_3^{(1)}-x_4^{(3)}-x_4^{(2)}+x_5^{(1)}, \\
		& \hspace{5pt}x_2^{(2)}-x_3^{(3)}+x_3^{(1)}+x_4^{(3)}-x_5^{(2)} \geq x_3^{(2)}-x_4^{(3)}+x_4^{(1)}, \\
		& \hspace{5pt}x_2^{(2)}-x_3^{(3)}+x_3^{(1)}+x_4^{(3)}-x_5^{(2)} \geq x_4^{(2)}+x_4^{(1)}-x_6^{(2)}, \\
		& \hspace{5pt}x_2^{(2)}-x_3^{(3)}+x_3^{(1)}+x_4^{(3)}-x_5^{(2)} > x_2^{(2)}+x_2^{(1)}-x_6^{(3)}, \\
		& \hspace{5pt}x_2^{(2)}-x_3^{(3)}+x_3^{(1)}+x_4^{(3)}-x_5^{(2)} > x_2^{(2)}+x_2^{(1)}-x_4^{(4)}-x_4^{(3)}+x_6^{(2)}, \\
		& \hspace{5pt}x_2^{(2)}-x_3^{(3)}+x_3^{(1)}+x_4^{(3)}-x_5^{(2)} > x_2^{(2)}+x_2^{(1)}-x_3^{(2)}-x_4^{(4)}+x_4^{(2)}, \\
		& \hspace{5pt}x_2^{(2)}-x_3^{(3)}+x_3^{(1)}+x_4^{(3)}-x_5^{(2)} > x_2^{(2)}+x_2^{(1)}-x_3^{(3)}-x_3^{(2)}+x_4^{(3)}+x_4^{(2)}-x_5^{(2)}, \\
		& \hspace{5pt}x_2^{(2)}-x_3^{(3)}+x_3^{(1)}+x_4^{(3)}-x_5^{(2)} > x_2^{(2)}+x_3^{(1)}-x_4^{(4)}, \\
		& \hspace{5pt}x_2^{(2)}-x_3^{(3)}+x_3^{(1)}+x_4^{(3)}-x_5^{(2)} \geq x_3^{(2)}+x_3^{(3)}-x_5^{(2)}, \\		
(\breve{F8})	& \hspace{5pt}x_3^{(2)}+x_3^{(3)}-x_5^{(2)} \geq x_6^{(1)}, \\
		& \hspace{5pt}x_3^{(2)}+x_3^{(3)}-x_5^{(2)} \geq x_2^{(2)}+x_2^{(1)}-x_3^{(3)}-x_3^{(2)}+x_5^{(1)}, \\
		& \hspace{5pt}x_3^{(2)}+x_3^{(3)}-x_5^{(2)} \geq x_2^{(2)}-x_3^{(3)}+x_3^{(1)}-x_4^{(2)}+x_5^{(1)} , \\
		& \hspace{5pt}x_3^{(2)}+x_3^{(3)}-x_5^{(2)} \geq x_2^{(2)}-x_3^{(3)}+x_4^{(1)}, \\
		& \hspace{5pt}x_3^{(2)}+x_3^{(3)}-x_5^{(2)} \geq x_3^{(2)}+x_3^{(1)}-x_4^{(3)}-x_4^{(2)}+x_5^{(1)}, \\
		& \hspace{5pt}x_3^{(2)}+x_3^{(3)}-x_5^{(2)} \geq x_3^{(2)}-x_4^{(3)}+x_4^{(1)}, \\
		& \hspace{5pt}x_3^{(2)}+x_3^{(3)}-x_5^{(2)} \geq x_4^{(2)}+x_4^{(1)}-x_6^{(2)}, \\
		& \hspace{5pt}x_3^{(2)}+x_3^{(3)}-x_5^{(2)} > x_2^{(2)}+x_2^{(1)}-x_6^{(3)}, \\
		& \hspace{5pt}x_3^{(2)}+x_3^{(3)}-x_5^{(2)} > x_2^{(2)}+x_2^{(1)}-x_4^{(4)}-x_4^{(3)}+x_6^{(2)}, \\
		& \hspace{5pt}x_3^{(2)}+x_3^{(3)}-x_5^{(2)} > x_2^{(2)}+x_2^{(1)}-x_3^{(2)}-x_4^{(4)}+x_4^{(2)}, \\
		& \hspace{5pt}x_3^{(2)}+x_3^{(3)}-x_5^{(2)} > x_2^{(2)}+x_2^{(1)}-x_3^{(3)}-x_3^{(2)}+x_4^{(3)}+x_4^{(2)}-x_5^{(2)}, \\
		& \hspace{5pt}x_3^{(2)}+x_3^{(3)}-x_5^{(2)} > x_2^{(2)}+x_3^{(1)}-x_4^{(4)}, \\
		& \hspace{5pt}x_3^{(2)}+x_3^{(3)}-x_5^{(2)} > x_2^{(2)}-x_3^{(3)}+x_3^{(1)}+x_4^{(3)}-x_5^{(2)}, \\
(\breve{F9}) 	& \hspace{5pt}x_2^{(2)}-x_3^{(3)}+x_3^{(1)}-x_4^{(2)}+x_5^{(1)}  \geq x_6^{(1)}, \\
		& \hspace{5pt}x_2^{(2)}-x_3^{(3)}+x_3^{(1)}-x_4^{(2)}+x_5^{(1)} > x_2^{(2)}+x_2^{(1)}-x_3^{(3)}-x_3^{(2)}+x_5^{(1)}, \\
		& \hspace{5pt}x_2^{(2)}-x_3^{(3)}+x_3^{(1)}-x_4^{(2)}+x_5^{(1)} \geq x_2^{(2)}-x_3^{(3)}+x_4^{(1)}, \\
		& \hspace{5pt}x_2^{(2)}-x_3^{(3)}+x_3^{(1)}-x_4^{(2)}+x_5^{(1)} \geq x_3^{(2)}+x_3^{(1)}-x_4^{(3)}-x_4^{(2)}+x_5^{(1)}, \\
		& \hspace{5pt}x_2^{(2)}-x_3^{(3)}+x_3^{(1)}-x_4^{(2)}+x_5^{(1)} \geq x_3^{(2)}-x_4^{(3)}+x_4^{(1)}, \\
		& \hspace{5pt}x_2^{(2)}-x_3^{(3)}+x_3^{(1)}-x_4^{(2)}+x_5^{(1)} \geq x_4^{(2)}+x_4^{(1)}-x_6^{(2)}, \\
		& \hspace{5pt}x_2^{(2)}-x_3^{(3)}+x_3^{(1)}-x_4^{(2)}+x_5^{(1)} > x_2^{(2)}+x_2^{(1)}-x_6^{(3)}, \\
		& \hspace{5pt}x_2^{(2)}-x_3^{(3)}+x_3^{(1)}-x_4^{(2)}+x_5^{(1)} > x_2^{(2)}+x_2^{(1)}-x_4^{(4)}-x_4^{(3)}+x_6^{(2)}, \\
		& \hspace{5pt}x_2^{(2)}-x_3^{(3)}+x_3^{(1)}-x_4^{(2)}+x_5^{(1)} > x_2^{(2)}+x_2^{(1)}-x_3^{(2)}-x_4^{(4)}+x_4^{(2)}, \\
		& \hspace{5pt}x_2^{(2)}-x_3^{(3)}+x_3^{(1)}-x_4^{(2)}+x_5^{(1)} > x_2^{(2)}+x_2^{(1)}-x_3^{(3)}-x_3^{(2)}+x_4^{(3)}+x_4^{(2)}-x_5^{(2)}, \\
		& \hspace{5pt}x_2^{(2)}-x_3^{(3)}+x_3^{(1)}-x_4^{(2)}+x_5^{(1)} > x_2^{(2)}+x_3^{(1)}-x_4^{(4)}, \\
		& \hspace{5pt}x_2^{(2)}-x_3^{(3)}+x_3^{(1)}-x_4^{(2)}+x_5^{(1)} > x_2^{(2)}-x_3^{(3)}+x_3^{(1)}+x_4^{(3)}-x_5^{(2)}, \\
		& \hspace{5pt}x_2^{(2)}-x_3^{(3)}+x_3^{(1)}-x_4^{(2)}+x_5^{(1)} \geq x_3^{(2)}+x_3^{(3)}-x_5^{(2)}, \\	
(\breve{F10}) 	& \hspace{5pt}x_2^{(2)}-x_3^{(3)}+x_4^{(1)}  \geq x_6^{(1)}, \\
		& \hspace{5pt}x_2^{(2)}-x_3^{(3)}+x_4^{(1)} > x_2^{(2)}+x_2^{(1)}-x_3^{(3)}-x_3^{(2)}+x_5^{(1)}, \\
		& \hspace{5pt}x_2^{(2)}-x_3^{(3)}+x_4^{(1)} > x_2^{(2)}-x_3^{(3)}+x_3^{(1)}-x_4^{(2)}+x_5^{(1)}, \\
		& \hspace{5pt}x_2^{(2)}-x_3^{(3)}+x_4^{(1)} \geq x_3^{(2)}+x_3^{(1)}-x_4^{(3)}-x_4^{(2)}+x_5^{(1)}, \\
		& \hspace{5pt}x_2^{(2)}-x_3^{(3)}+x_4^{(1)} \geq x_3^{(2)}-x_4^{(3)}+x_4^{(1)}, \\
		& \hspace{5pt}x_2^{(2)}-x_3^{(3)}+x_4^{(1)} \geq x_4^{(2)}+x_4^{(1)}-x_6^{(2)}, \\
		& \hspace{5pt}x_2^{(2)}-x_3^{(3)}+x_4^{(1)} > x_2^{(2)}+x_2^{(1)}-x_6^{(3)}, \\
		& \hspace{5pt}x_2^{(2)}-x_3^{(3)}+x_4^{(1)} > x_2^{(2)}+x_2^{(1)}-x_4^{(4)}-x_4^{(3)}+x_6^{(2)}, \\
		& \hspace{5pt}x_2^{(2)}-x_3^{(3)}+x_4^{(1)} > x_2^{(2)}+x_2^{(1)}-x_3^{(2)}-x_4^{(4)}+x_4^{(2)}, \\
		& \hspace{5pt}x_2^{(2)}-x_3^{(3)}+x_4^{(1)} > x_2^{(2)}+x_2^{(1)}-x_3^{(3)}-x_3^{(2)}+x_4^{(3)}+x_4^{(2)}-x_5^{(2)}, \\
		& \hspace{5pt}x_2^{(2)}-x_3^{(3)}+x_4^{(1)} > x_2^{(2)}+x_3^{(1)}-x_4^{(4)}, \\
		& \hspace{5pt}x_2^{(2)}-x_3^{(3)}+x_4^{(1)} > x_2^{(2)}-x_3^{(3)}+x_3^{(1)}+x_4^{(3)}-x_5^{(2)}, \\
		& \hspace{5pt}x_2^{(2)}-x_3^{(3)}+x_4^{(1)} \geq x_3^{(2)}+x_3^{(3)}-x_5^{(2)}, \\
(\breve{F11}) 	& \hspace{5pt}x_3^{(2)}+x_3^{(1)}-x_4^{(3)}-x_4^{(2)}+x_5^{(1)} \geq x_6^{(1)}, \\
		& \hspace{5pt}x_3^{(2)}+x_3^{(1)}-x_4^{(3)}-x_4^{(2)}+x_5^{(1)} > x_2^{(2)}+x_2^{(1)}-x_3^{(3)}-x_3^{(2)}+x_5^{(1)}, \\
		& \hspace{5pt}x_3^{(2)}+x_3^{(1)}-x_4^{(3)}-x_4^{(2)}+x_5^{(1)} > x_2^{(2)}-x_3^{(3)}+x_3^{(1)}-x_4^{(2)}+x_5^{(1)}, \\
		& \hspace{5pt}x_3^{(2)}+x_3^{(1)}-x_4^{(3)}-x_4^{(2)}+x_5^{(1)} \geq x_2^{(2)}-x_3^{(3)}+x_4^{(1)}, \\
		& \hspace{5pt}x_3^{(2)}+x_3^{(1)}-x_4^{(3)}-x_4^{(2)}+x_5^{(1)} \geq x_3^{(2)}-x_4^{(3)}+x_4^{(1)}, \\
		& \hspace{5pt}x_3^{(2)}+x_3^{(1)}-x_4^{(3)}-x_4^{(2)}+x_5^{(1)} \geq x_4^{(2)}+x_4^{(1)}-x_6^{(2)}, \\
		& \hspace{5pt}x_3^{(2)}+x_3^{(1)}-x_4^{(3)}-x_4^{(2)}+x_5^{(1)} > x_2^{(2)}+x_2^{(1)}-x_6^{(3)}, \\
		& \hspace{5pt}x_3^{(2)}+x_3^{(1)}-x_4^{(3)}-x_4^{(2)}+x_5^{(1)} > x_2^{(2)}+x_2^{(1)}-x_4^{(4)}-x_4^{(3)}+x_6^{(2)}, \\
		& \hspace{5pt}x_3^{(2)}+x_3^{(1)}-x_4^{(3)}-x_4^{(2)}+x_5^{(1)} > x_2^{(2)}+x_2^{(1)}-x_3^{(2)}-x_4^{(4)}+x_4^{(2)}, \\
		& \hspace{5pt}x_3^{(2)}+x_3^{(1)}-x_4^{(3)}-x_4^{(2)}+x_5^{(1)} > x_2^{(2)}+x_2^{(1)}-x_3^{(3)}-x_3^{(2)}+x_4^{(3)}+x_4^{(2)}-x_5^{(2)}, \\
		& \hspace{5pt}x_3^{(2)}+x_3^{(1)}-x_4^{(3)}-x_4^{(2)}+x_5^{(1)} > x_2^{(2)}+x_3^{(1)}-x_4^{(4)}, \\
		& \hspace{5pt}x_3^{(2)}+x_3^{(1)}-x_4^{(3)}-x_4^{(2)}+x_5^{(1)} > x_2^{(2)}-x_3^{(3)}+x_3^{(1)}+x_4^{(3)}-x_5^{(2)}, \\
		& \hspace{5pt}x_3^{(2)}+x_3^{(1)}-x_4^{(3)}-x_4^{(2)}+x_5^{(1)} > x_3^{(2)}+x_3^{(3)}-x_5^{(2)}, \\		
(\breve{F12}) 	& \hspace{5pt}x_3^{(2)}-x_4^{(3)}+x_4^{(1)} \geq x_6^{(1)}, \\
		& \hspace{5pt}x_3^{(2)}-x_4^{(3)}+x_4^{(1)} > x_2^{(2)}+x_2^{(1)}-x_3^{(3)}-x_3^{(2)}+x_5^{(1)}, \\
		& \hspace{5pt}x_3^{(2)}-x_4^{(3)}+x_4^{(1)} > x_2^{(2)}-x_3^{(3)}+x_3^{(1)}-x_4^{(2)}+x_5^{(1)}, \\
		& \hspace{5pt}x_3^{(2)}-x_4^{(3)}+x_4^{(1)} > x_2^{(2)}-x_3^{(3)}+x_4^{(1)}, \\
		& \hspace{5pt}x_3^{(2)}-x_4^{(3)}+x_4^{(1)} > x_3^{(2)}+x_3^{(1)}-x_4^{(3)}-x_4^{(2)}+x_5^{(1)}, \\
		& \hspace{5pt}x_3^{(2)}-x_4^{(3)}+x_4^{(1)} \geq x_4^{(2)}+x_4^{(1)}-x_6^{(2)}, \\
		& \hspace{5pt}x_3^{(2)}-x_4^{(3)}+x_4^{(1)} > x_2^{(2)}+x_2^{(1)}-x_6^{(3)}, \\
		& \hspace{5pt}x_3^{(2)}-x_4^{(3)}+x_4^{(1)} > x_2^{(2)}+x_2^{(1)}-x_4^{(4)}-x_4^{(3)}+x_6^{(2)}, \\
		& \hspace{5pt}x_3^{(2)}-x_4^{(3)}+x_4^{(1)} > x_2^{(2)}+x_2^{(1)}-x_3^{(2)}-x_4^{(4)}+x_4^{(2)}, \\
		& \hspace{5pt}x_3^{(2)}-x_4^{(3)}+x_4^{(1)} > x_2^{(2)}+x_2^{(1)}-x_3^{(3)}-x_3^{(2)}+x_4^{(3)}+x_4^{(2)}-x_5^{(2)}, \\
		& \hspace{5pt}x_3^{(2)}-x_4^{(3)}+x_4^{(1)} > x_2^{(2)}+x_3^{(1)}-x_4^{(4)}, \\
		& \hspace{5pt}x_3^{(2)}-x_4^{(3)}+x_4^{(1)} > x_2^{(2)}-x_3^{(3)}+x_3^{(1)}+x_4^{(3)}-x_5^{(2)}, \\
		& \hspace{5pt}x_3^{(2)}-x_4^{(3)}+x_4^{(1)} > x_3^{(2)}+x_3^{(3)}-x_5^{(2)}, \\		
(\breve{F13}) 	& \hspace{5pt}x_4^{(2)}+x_4^{(1)}-x_6^{(2)} \geq x_6^{(1)}, \\
		& \hspace{5pt}x_4^{(2)}+x_4^{(1)}-x_6^{(2)} > x_2^{(2)}+x_2^{(1)}-x_3^{(3)}-x_3^{(2)}+x_5^{(1)}, \\
		& \hspace{5pt}x_4^{(2)}+x_4^{(1)}-x_6^{(2)} > x_2^{(2)}-x_3^{(3)}+x_3^{(1)}-x_4^{(2)}+x_5^{(1)}, \\
		& \hspace{5pt}x_4^{(2)}+x_4^{(1)}-x_6^{(2)} > x_2^{(2)}-x_3^{(3)}+x_4^{(1)}, \\
		& \hspace{5pt}x_4^{(2)}+x_4^{(1)}-x_6^{(2)} > x_3^{(2)}+x_3^{(1)}-x_4^{(3)}-x_4^{(2)}+x_5^{(1)}, \\
		& \hspace{5pt}x_4^{(2)}+x_4^{(1)}-x_6^{(2)} > x_3^{(2)}-x_4^{(3)}+x_4^{(1)}, \\
		& \hspace{5pt}x_4^{(2)}+x_4^{(1)}-x_6^{(2)} > x_2^{(2)}+x_2^{(1)}-x_6^{(3)}, \\
		& \hspace{5pt}x_4^{(2)}+x_4^{(1)}-x_6^{(2)} > x_2^{(2)}+x_2^{(1)}-x_4^{(4)}-x_4^{(3)}+x_6^{(2)}, \\
		& \hspace{5pt}x_4^{(2)}+x_4^{(1)}-x_6^{(2)} > x_2^{(2)}+x_2^{(1)}-x_3^{(2)}-x_4^{(4)}+x_4^{(2)}, \\
		& \hspace{5pt}x_4^{(2)}+x_4^{(1)}-x_6^{(2)} > x_2^{(2)}+x_2^{(1)}-x_3^{(3)}-x_3^{(2)}+x_4^{(3)}+x_4^{(2)}-x_5^{(2)}, \\
		& \hspace{5pt}x_4^{(2)}+x_4^{(1)}-x_6^{(2)} > x_2^{(2)}+x_3^{(1)}-x_4^{(4)}, \\
		& \hspace{5pt}x_4^{(2)}+x_4^{(1)}-x_6^{(2)} > x_2^{(2)}-x_3^{(3)}+x_3^{(1)}+x_4^{(3)}-x_5^{(2)}, \\
		& \hspace{5pt}x_4^{(2)}+x_4^{(1)}-x_6^{(2)} > x_3^{(2)}+x_3^{(3)}-x_5^{(2)}, \\
(\breve{F14})	& \hspace{5pt}x_6^{(1)} > x_2^{(2)}+x_2^{(1)}-x_3^{(3)}-x_3^{(2)}+x_5^{(1)}, \\
		& \hspace{5pt}x_6^{(1)} > x_2^{(2)}-x_3^{(3)}+x_3^{(1)}-x_4^{(2)}+x_5^{(1)} , \\
		& \hspace{5pt}x_6^{(1)} > x_2^{(2)}-x_3^{(3)}+x_4^{(1)}, \\
		& \hspace{5pt}x_6^{(1)} > x_3^{(2)}+x_3^{(1)}-x_4^{(3)}-x_4^{(2)}+x_5^{(1)}, \\
		& \hspace{5pt}x_6^{(1)} > x_3^{(2)}-x_4^{(3)}+x_4^{(1)}, \\
		& \hspace{5pt}x_6^{(1)} > x_4^{(2)}+x_4^{(1)}-x_6^{(2)}, \\
		& \hspace{5pt}x_6^{(1)} > x_2^{(2)}+x_2^{(1)}-x_6^{(3)}, \\
		& \hspace{5pt}x_6^{(1)} > x_2^{(2)}+x_2^{(1)}-x_4^{(4)}-x_4^{(3)}+x_6^{(2)}, \\
		& \hspace{5pt}x_6^{(1)} > x_2^{(2)}+x_2^{(1)}-x_3^{(2)}-x_4^{(4)}+x_4^{(2)}, \\
		& \hspace{5pt}x_6^{(1)} > x_2^{(2)}+x_2^{(1)}-x_3^{(3)}-x_3^{(2)}+x_4^{(3)}+x_4^{(2)}-x_5^{(2)}, \\
		& \hspace{5pt}x_6^{(1)} > x_2^{(2)}+x_3^{(1)}-x_4^{(4)}, \\
		& \hspace{5pt}x_6^{(1)} > x_2^{(2)}-x_3^{(3)}+x_3^{(1)}+x_4^{(3)}-x_5^{(2)}, \\
		& \hspace{5pt}x_6^{(1)} > x_3^{(2)}+x_3^{(3)}-x_5^{(2)}, 
\end{align*}
Then for $x \in \mathcal{X}$ we have $\f_0(x) = \mathcal{UD}(e_0^c)(x)\arrowvert_{c=-1}$ given by
\begin{align*}
\tilde{f_0}(x) = \begin{cases} 
&(x_6^{(3)}+1,x_4^{(4)}+1, x_3^{(3)}+1, x_2^{(2)}+1, x_5^{(2)}+1, x_4^{(3)}+1, x_3^{(2)}+1, x_6^{(2)}, x_4^{(2)},  \\ 
	& \hspace{15pt} x_5^{(1)},x_1^{(1)}+1, x_2^{(1)}+1, x_3^{(1)}, x_4^{(1)}, x_6^{(1)}) \ \text{if} \ (\breve{F1}),  \vspace{1pt}\\ 
&(x_6^{(3)},x_4^{(4)}+1, x_3^{(3)}+1, x_2^{(2)}+1, x_5^{(2)}+1, x_4^{(3)}+1, x_3^{(2)}+1, x_6^{(2)}+1, x_4^{(2)},  \\ 
	& \hspace{15pt}x_5^{(1)},x_1^{(1)}+1, x_2^{(1)}+1, x_3^{(1)}, x_4^{(1)}, x_6^{(1)}) \ \text{if} \ (\breve{F2}),  \vspace{1pt}\\ 
&(x_6^{(3)},x_4^{(4)}+1, x_3^{(3)}+1, x_2^{(2)}+1, x_5^{(2)}+1, x_4^{(3)}, x_3^{(2)}+1, x_6^{(2)}+1, x_4^{(2)}+1, \\ 
	& \hspace{15pt}x_5^{(1)},x_1^{(1)}+1, x_2^{(1)}+1, x_3^{(1)}, x_4^{(1)}, x_6^{(1)})  \ \text{if} \ (\breve{F3}),  \vspace{1pt}\\ 
&(x_6^{(3)},x_4^{(4)}, x_3^{(3)}+1, x_2^{(2)}+1, x_5^{(2)}+1, x_4^{(3)}+1, x_3^{(2)}+1, x_6^{(2)}+1, x_4^{(2)}+1,  \\ 
	& \hspace{15pt}x_5^{(1)},x_1^{(1)}+1, x_2^{(1)}+1, x_3^{(1)}, x_4^{(1)}, x_6^{(1)}) \ \text{if} \ (\breve{F4}),  \vspace{1pt}\\ 
&(x_6^{(3)},x_4^{(4)}+1, x_3^{(3)}+1, x_2^{(2)}+1, x_5^{(2)}+1, x_4^{(3)}, x_3^{(2)}, x_6^{(2)}+1, x_4^{(2)}+1,  \\ 
	& \hspace{15pt}x_5^{(1)},x_1^{(1)}+1, x_2^{(1)}+1, x_3^{(1)}+1, x_4^{(1)}, x_6^{(1)}) \ \text{if} \ (\breve{F5}),  \vspace{1pt}
 \end{cases}
\end{align*}
\begin{align*}
\tilde{f_0}(x) = \begin{cases} 
&(x_6^{(3)},x_4^{(4)}, x_3^{(3)}+1, x_2^{(2)}+1, x_5^{(2)}, x_4^{(3)}+1, x_3^{(2)}+1, x_6^{(2)}+1, x_4^{(2)}+1,  \\ 
	& \hspace{15pt}x_5^{(1)}+1,x_1^{(1)}+1, x_2^{(1)}+1, x_3^{(1)}, x_4^{(1)}, x_6^{(1)}) \ \text{if} \ (\breve{F6}),  \vspace{1pt}\\ 
&(x_6^{(3)},x_4^{(4)}, x_3^{(3)}+1, x_2^{(2)}+1, x_5^{(2)}+1, x_4^{(3)}+1, x_3^{(2)}, x_6^{(2)}+1, x_4^{(2)}+1, \\ 
	& \hspace{15pt}x_5^{(1)},x_1^{(1)}+1, x_2^{(1)}+1, x_3^{(1)}+1, x_4^{(1)}, x_6^{(1)})  \ \text{if} \ (\breve{F7}),  \vspace{1pt}\\ 
&(x_6^{(3)},x_4^{(4)}, x_3^{(3)}, x_2^{(2)}+1, x_5^{(2)}+1, x_4^{(3)}+1, x_3^{(2)}+1, x_6^{(2)}+1, x_4^{(2)}+1, \\ 
	& \hspace{15pt}x_5^{(1)},x_1^{(1)}+1, x_2^{(1)}+1, x_3^{(1)}+1, x_4^{(1)}, x_6^{(1)})  \ \text{if} \ (\breve{F8}),  \vspace{1pt}\\ 
&(x_6^{(3)},x_4^{(4)}, x_3^{(3)}+1, x_2^{(2)}+1, x_5^{(2)}, x_4^{(3)}+1, x_3^{(2)}, x_6^{(2)}+1, x_4^{(2)}+1,  \\ 
	& \hspace{15pt}x_5^{(1)}+1,x_1^{(1)}+1, x_2^{(1)}+1, x_3^{(1)}+1, x_4^{(1)}, x_6^{(1)}) \ \text{if} \ (\breve{F9}),  \vspace{1pt}\\ 
&(x_6^{(3)},x_4^{(4)}, x_3^{(3)}+1, x_2^{(2)}+1, x_5^{(2)}, x_4^{(3)}+1, x_3^{(2)}, x_6^{(2)}+1, x_4^{(2)},  \\ 
	& \hspace{15pt}x_5^{(1)}+1,x_1^{(1)}+1, x_2^{(1)}+1, x_3^{(1)}+1, x_4^{(1)}+1, x_6^{(1)}) \ \text{if} \ (\breve{F10}),  \vspace{1pt}\\    
&(x_6^{(3)},x_4^{(4)}, x_3^{(3)}, x_2^{(2)}+1, x_5^{(2)}, x_4^{(3)}+1, x_3^{(2)}+1, x_6^{(2)}+1, x_4^{(2)}+1, \\ 
	& \hspace{15pt}x_5^{(1)}+1,x_1^{(1)}+1, x_2^{(1)}+1, x_3^{(1)}+1, x_4^{(1)}, x_6^{(1)})  \ \text{if} \ (\breve{F11}),  \vspace{1pt}\\ 
&(x_6^{(3)},x_4^{(4)}, x_3^{(3)}, x_2^{(2)}+1, x_5^{(2)}, x_4^{(3)}+1, x_3^{(2)}+1, x_6^{(2)}+1, x_4^{(2)},  \\ 
	& \hspace{15pt}x_5^{(1)}+1,x_1^{(1)}+1, x_2^{(1)}+1, x_3^{(1)}+1, x_4^{(1)}+1, x_6^{(1)}) \ \text{if} \ (\breve{F12}),  \vspace{1pt}\\ 
&(x_6^{(3)},x_4^{(4)}, x_3^{(3)}, x_2^{(2)}+1, x_5^{(2)}, x_4^{(3)}, x_3^{(2)}+1, x_6^{(2)}+1, x_4^{(2)}+1,  \\ 
	& \hspace{15pt}x_5^{(1)}+1,x_1^{(1)}+1, x_2^{(1)}+1, x_3^{(1)}+1, x_4^{(1)}+1, x_6^{(1)}) \ \text{if} \ (\breve{F13}),  \vspace{1pt}\\ 
&(x_6^{(3)},x_4^{(4)}, x_3^{(3)}, x_2^{(2)}+1, x_5^{(2)}, x_4^{(3)}, x_3^{(2)}+1, x_6^{(2)}, x_4^{(2)}+1,  \\ 
	& \hspace{15pt}x_5^{(1)}+1,x_1^{(1)}+1, x_2^{(1)}+1, x_3^{(1)}+1, x_4^{(1)}+1, x_6^{(1)}+1) \ \text{if} \ (\breve{F14}).
 \end{cases}
\end{align*}

\begin{theorem} The map
\begin{displaymath}
\begin{array}{lccc}
\Omega : & B^{6,\infty} & \rightarrow & \mathcal{X},\\
&b=(b_{ij})_{i \leq j \leq i+5,\ 1 \leq i \leq 6} &\mapsto & x=(x_6^{(3)},x_4^{(4)}, x_3^{(3)}, x_2^{(2)}, x_5^{(2)}, x_4^{(3)}, x_3^{(2)}, x_6^{(2)}, x_4^{(2)}, \\
& & & \qquad x_5^{(1)},x_1^{(1)}, x_2^{(1)}, x_3^{(1)}, x_4^{(1)}, x_6^{(1)})
\end{array}
\end{displaymath}
defined by 
\begin{align*}
x_m^{(l)} &= \begin{cases}
\sum_{j=m-l+1}^m b_{m-l+1, j}, \ \ \text{for} \; \ \ m= 1, 2, 3, 4\\
\sum_{j=m-2l+1}^m b_{m-2l+1, j}, \ \ \text{for} \; \ \ m= 5 \\
\sum_{j=m-2l+1}^{m-1} b_{m-2l+1, j}, \ \ \text{for} \; \ \ m= 6.
\end{cases}
\end{align*}
is an isomorphism of crystals.
\end{theorem}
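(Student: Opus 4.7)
The plan is to verify $\Omega$ is a bijection and to check it intertwines every piece of the crystal structure. For the bijection, the coordinates $x_m^{(l)}$ are partial row sums of the $b_{ij}$ which, together with the row-sum-zero and equal-sum constraints defining $B^{6,\infty}$, determine every entry uniquely: one reads off $b_{11} = x_1^{(1)}$, $b_{12} = x_2^{(2)} - x_1^{(1)}$, $b_{13} = x_3^{(3)} - x_2^{(2)}$, $b_{14} = x_4^{(4)} - x_3^{(3)}$, $b_{15} = x_6^{(3)} - x_4^{(4)}$, and then propagates down the rows using the row-sum-zero condition together with the forced identifications $b_{16} = b_{27}$, etc., coming from the equal-sum relations. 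A parameter count ($36 - 6 - 15 = 15$) confirms this matches $\dim\mathcal{X} = 15$, so $\Omega^{-1}$ is well-defined.

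\textbf{Preservation of statistics and commutation for $k \geq 1$.} Substituting the $x$-dictionary into the Section $2$ formulas for $\mathrm{wt}_k(b), \varepsilon_k(b), \varphi_k(b)$ reproduces exactly $\mathcal{UD}(\gamma_k)(x), \mathcal{UD}(\varepsilon_k)(x)$, and $\mathcal{UD}(\gamma_k)(x) + \mathcal{UD}(\varepsilon_k)(x)$. For instance, $\mathrm{wt}_1(b) = b_{11} - b_{12} - b_{22} = 2x_1^{(1)} - x_2^{(2)} - x_2^{(1)}$, and $\varepsilon_2(b) = \max\{b_{13}, -b_{12}+b_{13}+b_{23}\}$ rewrites to $\max\{-x_2^{(2)}+x_3^{(3)}, x_1^{(1)} - 2x_2^{(2)} - x_2^{(1)} + x_3^{(3)} + x_3^{(2)}\}$, matching $\mathcal{UD}(\varepsilon_2)(x)$. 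For $k = 1, \ldots, 6$, both the dichotomies governing $\tilde e_k, \tilde f_k$ on $B^{6,\infty}$ and the resulting $\pm 1$ changes to selected $b_{ij}$ translate cleanly: e.g.\ the dichotomy $b_{12} \geq b_{23}$ versus $b_{12} < b_{23}$ governing $\tilde f_2$ becomes $x_2^{(2)} + x_2^{(1)} \geq x_1^{(1)} + x_3^{(2)}$ versus the opposite, and the change $b_{12} \to b_{12}-1,\ b_{13} \to b_{13}+1$ induces only $x_2^{(2)} \to x_2^{(2)}-1$. Each value of $k$ is handled by such a direct verification.

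\textbf{The main obstacle: $k = 0$.} The fourteen-case definitions of $\tilde e_0, \tilde f_0$ must be matched across the two sides: the conditions $(E_j), (F_j)$ on $B^{6,\infty}$ must correspond to the $\tilde e_0$-counterparts of $(\breve{F}_j)$ and to $(\breve{F}_j)$ itself, and the accompanying action formulas must agree. The key observation is that every linear combination of the $b_{ij}$ appearing in an $(F_j)$-inequality telescopes to a short $x$-expression; for example, $-b_{13} - b_{14} - b_{15} + b_{22}$ collapses to $x_2^{(2)} + x_2^{(1)} - x_6^{(3)}$, and each of the thirteen comparison expressions in $(F_j)$ matches one of the thirteen right-hand sides of $(\breve{F}_j)$ in the same order, with strict versus non-strict signs tracked correctly under the $(E_j) \leftrightarrow (F_j)$ flip rule. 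For the actions, in each case one computes the induced change in every partial row sum $x_m^{(l)}$ from the prescribed $b$-increments; because the increments in each row sum to zero, the induced changes concentrate exactly on the $x$-coordinates listed in the $\tilde f_0$-formula on $\mathcal{X}$, as illustrated by case $(F_1)$ in which the twelve prescribed $b$-changes induce $+1$ on precisely $x_1^{(1)}, x_2^{(1)}, x_2^{(2)}, x_3^{(2)}, x_3^{(3)}, x_4^{(3)}, x_4^{(4)}, x_5^{(2)}, x_6^{(3)}$ and leave the remaining six coordinates fixed, matching $(\breve{F}_1)$. The argument for $\tilde e_0$ is identical. This verification is mechanical but voluminous and constitutes the bulk of the proof.
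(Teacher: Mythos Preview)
Your approach is essentially the same as the paper's: exhibit the inverse map explicitly from the partial-row-sum formulas, then verify by direct substitution that the conditions and actions of $\tilde e_k,\tilde f_k$ and the values of $\mathrm{wt}_k,\varepsilon_k$ match on the two sides, treating the fourteen $k=0$ cases by the same mechanical translation. One small slip: the dichotomy you quote for $\tilde f_2$ should be $b_{12}>b_{23}$ versus $b_{12}\le b_{23}$ (and correspondingly $x_2^{(2)}+x_2^{(1)}>x_1^{(1)}+x_3^{(2)}$), not the non-strict/strict pair you wrote, which is the $\tilde e_2$ split.
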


\begin{proof} First we observe that the map $\Omega^{-1} : \mathcal{X} \rightarrow B^{6, \infty}$ is given by $\Omega^{-1}(x)=b$ 
\begin{align*}
&\text{where}\ b_{11} = x_1^{(1)},  \ b_{12} =x_2^{(2)}-x_1^{(1)}, \ b_{13}  = x_3^{(3)}-x_2^{(2)}, \ b_{14} =x_4^{(4)}-x_3^{(3)}, \\ &b_{15}=x_6^{(3)}-x_4^{(4)}, \ b_{16} =-x_6^{(3)}, \ b_{22} = x_2^{(1)}, \ b_{23} =x_3^{(2)}-x_2^{(1)}, \ b_{24} = x_4^{(3)}-x_3^{(2)}, \\
& b_{25} = x_5^{(2)}-x_4^{(3)}, \ b_{26} = x_6^{(3)}-x_5^{(2)}, \ b_{27} = -x_6^{(3)}, \ b_{33} = x_3^{(1)}, \ b_{34} =x_4^{(2)}-x_3^{(1)}, \\
& b_{35}=x_6^{(2)}-x_4^{(2)}, \ b_{36} =x_5^{(2)}-x_6^{(2)}, \ b_{37} = x_4^{(4)}-x_5^{(2)},\ b_{38} = -x_4^{(4)}, \ b_{44} = x_4^{(1)}, \\
& b_{45} = x_5^{(1)}-x_4^{(1)}, \ b_{46} = x_6^{(2)}-x_5^{(1)},\ b_{47} =x_4^{(3)}-x_6^{(2)}, \ b_{48} =x_3^{(3)}-x_4^{(3)},  \ b_{49} =-x_3^{(3)}, \\
& b_{55} = x_6^{(1)}, \ b_{56} = x_5^{(1)}-x_6^{(1)}, \ b_{57} = x_4^{(2)}-x_5^{(1)}, \ b_{58} = x_3^{(2)}-x_4^{(2)}, \ b_{59} = x_2^{(2)}-x_3^{(2)}, \\ 
   &b_{5,10} = -x_2^{(2)},\ b_{66} = x_6^{(1)}, \ b_{67} = x_4^{(1)}-x_6^{(1)}, \ b_{68} = x_3^{(1)}-x_4^{(1)}, \ b_{69} = x_2^{(1)}-x_3^{(1)}, \\
    &b_{6,10} = x_1^{(1)}-x_2^{(1)}, \ b_{6,11} = -x_1^{(1)}.
\end{align*}
Hence the map $\Omega$ is bijective.  To prove that $\Omega$ is an isomorphism of crystals we need to show 
that for $b \in B^{6,\infty}$ and $0 \leq k \leq 6$ we have:
\begin{align*}
\Omega(\tilde{f_k} (b)) 	&= \tilde{f_k} (\Omega(b)),\\
\Omega(\tilde{e_k} (b)) 	&= \tilde{e_k} (\Omega(b)),\\
\text{wt}_k (\Omega(b))	&= \text{wt}_k(b),\\
\veps_k (\Omega(b))		&= \veps_k(b).
\end{align*}
Hence $\vphi_k(\Omega(b)) = \text{wt}_k (\Omega(b)) + \veps_k (\Omega(b)) = \text{wt}_k(b) + \veps_k(b) = \vphi_k(b)$.	We observe that the conditions for the action of $\f_k$ on $\Omega(b)$ in $\mathcal{X}$ hold if and only if the corresponding conditions for the action of $\f_k$  on $b$ in $B^{6, \infty}$ hold for all $0\leq k \leq 6$. Suppose $\Omega(b) = x$ and $x_2^{(2)} + x_2^{(1)} > x_1^{(1)} + x_3^{(2)}$, then
$b_{11} + b_{12} + b_{22} > b_{11} + b_{22} + b_{23}$ and $\f_2(x) = (x_6^{(3)},x_4^{(4)}, x_3^{(3)}, x_2^{(2)}-1, x_5^{(2)}, x_4^{(3)}, x_3^{(2)}, x_6^{(2)}, x_4^{(2)}, x_5^{(1)},x_1^{(1)}, x_2^{(1)}, x_3^{(1)}, x_4^{(1)}, x_6^{(1)}) = \\\Omega(\f_2(b))$. Similarly, we can show $\Omega(\tilde{f_k} (b)) = \tilde{f_k} (\Omega(b))$  and $\Omega(\tilde{e_k} (b)) = \tilde{e_k} (\Omega(b))$ for $k=0,1,3,4,5,6$. We also have
$\text{wt}_0(\Omega(b)) = \text{wt}_0(x) = -x_2^{(2)} - x_2^{(1)} = - b_{11} - b_{12} - b_{22} = - b_{11} - b_{12} + b_{23} + b_{24} + b_{25} + b_{26} + b_{27} = \text{wt}_0(b)$ for all $b \in B^{6,\infty}$. Similarly, $\text{wt}_k (\Omega(b)) = \text{wt}_k(b)$ for $1 \leq k \leq 6$. Also, $\veps_6 (\Omega(b)) = \veps_6(x) = \text{max} \{-x_6^{(3)}, x_4^{(4)}+x_4^{(3)}-2x_6^{(3)}-x_6^{(2)}, x_4^{(4)}+x_4^{(3)}+x_4^{(2)}+x_4^{(1)}-2x_6^{(3)}-2x_6^{(2)}-x_6^{(1)}\} = \text{max}  \{-b_{11}-b_{12}-b_{13}-b_{14}-b_{15}, -b_{11}-b_{12}-b_{13}-b_{14}-2b_{15}+ b_{22}+b_{23}+b_{24}-b_{33}-b_{34}-b_{35}, -b_{11}-b_{12}-b_{13}-b_{14}-2b_{15}+b_{22}+b_{23}+b_{24}-b_{33}-b_{34}-2b_{35}+b_{44}-b_{55}\}= \veps_6(b)$. Similarly, 
$\veps_k (\Omega(b)) = \veps_k(b)$ for $0 \leq k \leq 5$ which completes the proof.
\end{proof}

\bibliographystyle{amsalpha}

\end{document}